\newtheorem{theorem}{Theorem}[section]
\newtheorem{lemma}[theorem]{Lemma}
\newtheorem{proposition}[theorem]{Proposition}
\newtheorem{corollary}[theorem]{Corollary}
\theoremstyle{definition}
\newtheorem{definition}[theorem]{Definition}
\newtheorem{remark}[theorem]{Remark}
\newtheorem{assumption}[theorem]{Assumption}
\newcommand{\IR}{\mathbb{R}}
\newcommand{\IC}{\mathbb{C}}
\newcommand{\IN}{\mathbb{N}}
\newcommand{\IZ}{\mathbb{Z}}
\newcommand{\cO}{\mathcal{O}}
\newcommand{\cP}{\mathcal{P}}
\newcommand{\cA}{\mathcal{A}}
\newcommand{\cB}{\mathcal{B}}
\newcommand{\cC}{\mathcal{C}}
\newcommand{\cL}{\mathcal{L}}
\newcommand{\cV}{\mathcal{V}}
\renewcommand{\L}{\mathrm{L}}
\newcommand{\C}{\mathrm{C}}
\renewcommand{\H}{\mathrm{H}}
\renewcommand{\S}{\mathrm{S}}
\renewcommand{\P}{\mathrm{T}_0}
\newcommand{\Lloc}{\L_{\mathrm{loc}}}
\newcommand{\fa}{\mathfrak{a}}
\newcommand{\ind}{{\mathbf{1}}}
\newcommand{\bigdot}{\boldsymbol{\cdot}}
\newcommand{\ii}{\mathrm{i}}
\renewcommand{\d}{\mathrm{d}}
\newcommand{\eps}{\varepsilon}
\newcommand{\loc}{\mathrm{loc}}
\renewcommand\Re{\operatorname{Re}}
\newcommand{\cube}{\scalebox{1.1}{$\square$}}
\newcommand{\smallcube}{\scalebox{0.8}{$\square$}}
\newcommand{\Lop}{\mathcal{L}}
\newcommand{\divergence}{\operatorname{div}}
\newcommand{\esssup}{\mathrm{ess sup}}
\DeclareMathOperator{\supp}{supp}
\DeclareMathOperator{\tr}{tr}
\DeclareMathOperator{\Id}{Id}
\DeclareMathOperator{\dom}{\mathcal{D}}
\newcommand{\DyadicMax}{\cA}
\numberwithin{equation}{section}
\title[Kato's Property for Generalized Stokes Operators]{On Kato's Square Root Property for the Generalized Stokes Operator}
\author{Luca Haardt}
\author{Patrick Tolksdorf}
\address{Karlsruhe Institute of Technology, Department of Mathematics, 76131 Karlsruhe, Germany}
\email{luca.haardt@kit.edu}
\email{patrick.tolksdorf@kit.edu}
\subjclass[2010]{}
\date{\today}
\thanks{}
\begin{document}
\begin{abstract}
We establish the Kato square root property for the generalized Stokes operator on $\IR^d$ with bounded measurable coefficients. More precisely, we identify the domain of the square root of $Au \coloneqq - \divergence(\mu \nabla u) + \nabla \phi$, $\divergence(u) = 0$, with the space of divergence-free $\H^1$-vector fields and further prove the estimate $\| 
A^{1/2} u \|_{\L^2} \simeq \| \nabla u \|_{\L^2}$. As an application we show that $A^{1/2}$ depends holomorphically on the coefficients $\mu$. Besides the boundedness and measurablility as well as an ellipticity condition on $\mu$, there are no requirements on the coefficients.
\end{abstract}
\maketitle

\section{Introduction}

\noindent At the beginning of the 1960s, Tosio Kato asked, whether, given two Hilbert spaces $V \subseteq H$ with $V$ being dense in $H$, the domain of the square root of the maximal accretive operator $L$ in $H$ associated to a closed and sectorial sesquilinear form $\fa \colon V \times V \to \IC$ always coincides with the form domain, i.e., whether $\dom(L^{1/2}) = V$. In the subsequent years it became clear by counterexamples of Lions~\cite{Lions} and McIntosh~\cite{McIntosh} that such a result does not hold in this generality. Lions already specified the question to elliptic operators $L = - \divergence (\mu \nabla \bigdot)$ with bounded measurable coefficients which became known as the Kato square root problem. It was eventually resolved in the whole space to the affirmative by Auscher, Hofmann, Lacey, McIntosh and Tchamitchian~\cite{AHLMcT} in 2002. \par 
In the last two decades there has been a surge of interesting results that revealed, for instance, a deep connection of the square root property to boundary value problems~\cites{Auscher_Egert, AAAHK, Auscher_Axelsson_McIntosh, AA, AM, AusSta, HMiMo}, operator-adapted function spaces~\cites{Auscher_Egert, Hofmann_Mayboroda_McIntosh, AusSta}, maximal regularity and quasilinear PDE~\cites{AA1, ABHR} and many more~\cites{Auscher, Lashi, HKMP, HPR, Bechtel, Egert}. Since the restriction from densely defined, closed and sectorial sesquilinear forms to elliptic operators in divergence form is quite drastic, there has always been the question for which other operators this square root property holds. \par
For elliptic operators the validity of the square root property was extended to situations including rough boundary geometries and mixed boundary conditions~\cites{AKM_mixed, Egert_Haller_Tolksdorf, Bechtel_Egert_Haller} as well as to operators on submanifolds~\cite{Morris}. Using the theory of Muckenhoupt weights, it was extended to elliptic operators with degenerate coefficients~\cites{C-UR1, C-UR2, C-UMR}. It was further established for Schr\"odinger operators~\cite{Bailey} and parabolic operators~\cite{Auscher_Egert_Nystroem, Nystroem}. \par
The purpose of this paper is to enrich the class of operators sharing the square root property by a nonlocal operator that naturally arises in the theory of fluid mechanics. More precisely, we consider the generalized Stokes operator in the whole space, which is formally given by
\begin{align}
\label{Eq: Formal Stokes operator}
Au \coloneqq - \divergence(\mu \nabla u) + \nabla \phi, \quad \divergence(u) = 0
\end{align}
and assume that the coefficients $\mu$ are bounded and measurable, and satisfy a G\r{a}rding inequality. The sesquilinear form to which $A$ is associated is given by
\begin{align}
\label{Eq: Sesquilinear form}
 \fa : \H^1_{\sigma} (\IR^d) \times \H^1_{\sigma} (\IR^d) \to \IC, \quad (u , v) \mapsto \sum_{\alpha , \beta , i , j = 1}^d \int_{\IR^d} \mu_{\alpha \beta}^{i j} \partial_{\beta} u_j \overline{\partial_{\alpha} v_i} \, \d x,
\end{align}
where $\H^1_{\sigma} (\IR^d) \coloneqq \{ u \in \H^1 (\IR^d ; \IC^d) \colon \divergence(u) = 0 \}$. Thus, the square root property for the generalized Stokes operator asks for the identification of $\dom(A^{1/2})$ with $\H^1_{\sigma} (\IR^d)$. \par
Even though the sesquilinear form in~\eqref{Eq: Sesquilinear form} has the same form as for elliptic operators in divergence form, it has the crucial difference expressed in the fact that the form domain is only given by $\H^1_{\sigma} (\IR^d)$. This results in the appearance of the pressure gradient in~\eqref{Eq: Formal Stokes operator} and, in stark contrast to elliptic operators, has the effect that $A$ is \textit{nonlocal}. The locality of elliptic operators is a key-property for the derivation of one of the most important tools used in the proof of the square root property commonly known as \textit{off-diagonal estimates}. For the resolvent of the generalized Stokes operator such off-diagonal estimates are not known. Recently, the second author established a nonlocal version of such estimates with polynomial decay~\cite{Tolksdorf_off-diagonal} but, so far, the order of decay is too small and thus insufficient for our purposes. We will improve the order of decay of these estimates by an iterative procedure and thereby attain a sufficient order of decay. Afterwards we will show how to adapt the proof of the square root property for elliptic operators to these new nonlocal estimates.

Let us state the main result of this article in detail. We start with our assumptions on the coefficients.

\begin{assumption}
\label{Ass: Coefficients}
The coefficients $\mu = (\mu_{\alpha \beta}^{i j})_{\alpha , \beta , i , j = 1}^d$ with $\mu_{\alpha \beta}^{i j} \in \L^{\infty} (\IR^d ; \IC)$ for all $1 \leq \alpha , \beta , i , j \leq d$ satisfy for some $\mu_{\bullet} , \mu^{\bullet} > 0$ the inequalities
\begin{align*}
 \Re \sum_{\alpha , \beta , i , j = 1}^d \int_{\IR^d} \mu^{i j}_{\alpha \beta} \partial_{\beta} u_j \overline{\partial_{\alpha} u_i} \, \d x \geq \mu_{\bullet} \| \nabla u \|_{\L^2}^2 \qquad (u \in \H^1 (\IR^d ; \IC^d))
\end{align*}
and
\begin{align*}
 \esssup_{x \in \Omega} \|\mu(x)\|_{\Lop(\IC^{d \times d})} \leq \mu^{\bullet}.
\end{align*}
\end{assumption} 

The generalized Stokes operator $A$ is realized on $\L^2_{\sigma} (\IR^d)$ via the sesquilinear form in~\eqref{Eq: Sesquilinear form}. Thus, $A$ is given by $A u \coloneqq f$, where $u \in \dom(A)$ and $f$ are associated via
\begin{align*}
 \dom(A) := \bigg\{ u \in \H^1_{\sigma} (\IR^d) : \, \exists f \in \L^2_{\sigma} (\IR^d) \text{ such that } \forall v \in \H^1_{\sigma} (\IR^d) \text{ it holds } \fa (u , v) = \int_{\IR^d} f \cdot \overline{v} \, \d x  \bigg\}.
\end{align*}

The main result of this article concerns a characterization of the domain of the square root of $A$ as the space of divergence-free $\H^1$-vector fields. We stress, that neither symmetry nor regularity of the coefficients is assumed.

\begin{theorem}
\label{Thm: Kato}
Let $\mu$ satisfy Assumption~\ref{Ass: Coefficients}. Then $A$ has the square root property, i.e., we have 
 that $\dom(A^{1 / 2}) = \H^1_{\sigma} (\IR^d)$ and
\begin{align*}
 C^{-1} \| \nabla u \|_{\L^2} \leq \| A^{1/2} u \|_{\L^2} \leq C \| \nabla u \|_{\L^2} \qquad (u \in \H^1_{\sigma} (\IR^d))
\end{align*}
where $C > 0$ only depends on $d$, $\mu_{\bullet}$ and $\mu^{\bullet}$.
\end{theorem}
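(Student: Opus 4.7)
The plan follows the strategy of~\cite{AHLMcT} for the classical elliptic Kato problem, and deals with the nonlocality induced by the pressure term by boosting the polynomial off-diagonal decay of the Stokes resolvent from~\cite{Tolksdorf_off-diagonal} to an order high enough to stand in for the exponential (Gaffney) decay available in the elliptic case.

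The upper bound is essentially automatic: for $u \in \H^1_\sigma(\IR^d)$ one has $\|A^{1/2}u\|_{\L^2}^2 = \Re \fa(u,u) \leq \mu^\bullet \|\nabla u\|_{\L^2}^2$ by the boundedness of the form. For the reverse estimate, the formal adjoint $A^*$ is associated to the form with coefficients $\overline{\mu_{\beta\alpha}^{j i}}$, which again satisfies Assumption~\ref{Ass: Coefficients}. A standard duality argument shows that the reverse bound for $A$ is equivalent to the analogous bound for $A^*$, so it suffices to prove it uniformly over all coefficients in the class. Via McIntosh's functional calculus and the representation $A^{1/2} f = c\int_0^\infty A(I+t^2 A)^{-1} f \, \d t$, the reverse bound is in turn reduced to the quadratic estimate
\begin{align*}
 \int_0^\infty \| t\nabla (I + t^2 A)^{-1} f \|_{\L^2}^2 \, \hmeas{t} \leq C \| f \|_{\L^2}^2 \qquad (f \in \L^2_\sigma(\IR^d)).
\end{align*}
This reduction is formal and relies only on sectoriality, not on locality.

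The real work lies in this quadratic estimate. The classical proof uses exponentially decaying off-diagonal estimates for $(I+t^2 A)^{-1}$ and $t\nabla (I+t^2 A)^{-1}$ together with a $T(b)$-type reduction to a Carleson measure estimate. Only polynomially decaying estimates are available here, and with insufficient order. The first technical step is an iterative bootstrap: applying the resolvent to a cut-off datum and writing the result as a term with support in a slightly enlarged set plus a commutator with the cut-off, iterated $N$ times, each layer supplies one extra polynomial weight, so the composite off-diagonal decay can be pushed to arbitrarily high polynomial order. In particular, the order can be made strictly larger than $d/2$ plus any fixed constant, which is what allows summation over dyadic annuli in the Carleson part.

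With such polynomial off-diagonal estimates in hand, the $T(b)$/Carleson scheme of~\cite{AHLMcT} can be adapted to the solenoidal setting. Test functions on a dyadic cube $Q$ are chosen divergence-free, for instance $f_Q^\eps \coloneqq (I + (\eps \ell(Q))^2 A)^{-1} \Pi_\sigma (\ind_{5Q} e_k)$ for $k = 1,\dots,d$, with $\Pi_\sigma$ the Helmholtz projection; the nondegeneracy $|\langle f_Q^\eps \rangle_Q| \gtrsim 1$ and the $\L^2$-bound $\| f_Q^\eps \|_{\L^2} \lesssim |Q|^{1/2}$ should follow because the polynomial tails of both $\Pi_\sigma$ and of the resolvent are controlled by the improved estimates. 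The Carleson packing condition is then extracted by the standard stopping-time and John--Nirenberg arguments, and the quadratic estimate is closed by a Cotlar-type interpolation in $t$. The main obstacle I anticipate is the iteration step, namely showing that the commutator bootstrap really does yield arbitrarily high polynomial decay with constants whose growth in $N$ is tame enough that summation over dyadic annuli still converges with room to spare; once that is settled, the remaining pieces are a careful but largely mechanical transcription of the elliptic proof with polynomially decaying tails replacing the exponential ones.
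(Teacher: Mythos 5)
Your high-level strategy coincides with the paper's: reduce to a square function estimate by sectoriality and duality, upgrade the polynomial off-diagonal decay of the Stokes resolvent, and then run the $T(b)$/Carleson machinery of the elliptic proof. However, the two places where you defer the work are exactly the two places where the actual proof has to do something new, and in both your proposed mechanism would not go through as described. First, the bootstrap. The cut-off/commutator iteration you sketch is the standard self-improvement for \emph{local} operators: it relies on the resolvent of a localized datum solving a homogeneous equation away from the support, which fails here precisely because of the pressure term. The paper instead iterates a \emph{nonlocal Caccioppoli inequality} for $(\lambda+\cA)^{-1}\cP\divergence$, in which the lower-order term is re-estimated on doubled balls, producing a super-exponentially decaying product of factors $C/(|\lambda|r^2 2^{2s})$ rather than extra polynomial weights. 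Moreover, your claim that the decay can be pushed to \emph{arbitrarily} high polynomial order is false: the nonlocality of the Helmholtz-type projection caps the attainable order at $\nu<d+2$ (in the $\L^2$ sense, order below $(d+2)/2$). Fortunately any $\nu>d$ suffices, which is why the argument closes, but the reason it closes is not the one you give.

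Second, the test function. The genuinely new obstruction in the solenoidal setting is that the $T(b)$ function must simultaneously be (a) the gradient of a compactly supported, \emph{divergence-free} field, so that the principal part approximation (which only holds for $\nabla u$ with $u\in\H^1_\sigma$) applies to it, and (b) have average on $Q$ pointing in a prescribed direction $\overline{\xi}\in\IC^{d\times d}$. Since $\tr(\nabla u)=0$ for divergence-free $u$, only trace-free directions are reachable, which forces the introduction of the projection $\P$ onto trace-free matrices in the Carleson measure itself; and making the localized affine field $\eta(x)(\P\overline{\xi})^{\top}(x-x_Q)$ divergence-free again after cutting off requires a Bogovski\u{\i} correction supported in $2Q\setminus\overline{Q}$. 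Your ansatz $f_Q^\eps=(I+(\eps\ell(Q))^2A)^{-1}\Pi_\sigma(\ind_{5Q}e_k)$ sidesteps both issues but creates new ones: the Helmholtz projection of a truncated constant field is a nonlocal object whose average over $Q$ is not obviously bounded away from zero uniformly in $Q$ (and is certainly not close to $e_k$), so the nondegeneracy $|\langle f_Q^\eps\rangle_Q|\gtrsim 1$ is unproven; and in your dual formulation the principal part $t\nabla(1+t^2A)^{-1}$ cannot even be applied to constants without inserting $\Pi_\sigma$, which does not commute with the dyadic averaging operator and breaks the principal part approximation as you would need it. Until you either verify the nondegeneracy of the Helmholtz-projected ansatz or replace it by a Bogovski\u{\i}-type construction, the Carleson measure estimate --- the heart of the proof --- remains open.
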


A particular application that Kato had in mind was the Lipschitz dependency of the square roots $A^{1/2}$ with respect to the coefficients $\mu$ measured in the $\L^{\infty}$-topology, which is proven via holomorphy. To formulate such properties and statements let us emphasize the coefficients in the notation of $A$ by writing $A_{\mu}$ for the generalized Stokes operator with coefficients $\mu$. 

\begin{theorem}
\label{Thm: Holomorphic dependence}
The set $\cO \coloneqq \{ 
\mu \colon \mu \text{ satisfies Assumption~\ref{Ass: Coefficients} for some $\mu_{\bullet} , \mu^{\bullet} > 0$} \}$ is open in the $\L^{\infty}$-topology and the map
    \begin{align*}
        \cO \to \mathcal{L}(\H^1_\sigma(\IR^d),\L^2_\sigma(\IR^d)), \quad \mu \mapsto A_{\mu}^{1/2}
    \end{align*}
    is holomorphic, i.\@e., for every $\mu\in\cO$ and $M\in \L^\infty(\IR^d;\cL(\IC^{d \times d}))$ there exists $r>0$ such that the map
    \begin{align*}
        \{z\in \IC : |z|<r\} \to \mathcal{L}(\H^1_\sigma(\IR^d),\L^2_\sigma(\IR^d)), \quad z \mapsto A^{1/2}_{\mu+zM}
    \end{align*}
    is holomorphic.
\end{theorem}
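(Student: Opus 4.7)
The plan is to reduce holomorphy of the square root to holomorphy of resolvents via a factorization through the form-level realization of $A$, using Theorem~\ref{Thm: Kato} as the uniform quantitative bound that makes the factorization work. Openness of $\cO$ follows from a direct perturbation: if $\mu \in \cO$ with constants $\mu_\bullet, \mu^\bullet$ and $\|\tilde\mu - \mu\|_\infty < \mu_\bullet/2$, then $\tilde\mu$ satisfies Assumption~\ref{Ass: Coefficients} with constants $\mu_\bullet/2$ and $\mu^\bullet + \mu_\bullet/2$. In particular, for $\mu\in \cO$ and $M \in \L^\infty$, there exists $r>0$ such that $\mu + zM \in \cO$ for all $|z|<r$ with common ellipticity constants, and Theorem~\ref{Thm: Kato} yields the uniform bound $\|A_{\mu+zM}^{1/2}\|_{\mathcal{L}(\H^1_\sigma(\IR^d),\L^2_\sigma(\IR^d))} \leq C$.

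Next, I would introduce the bounded form extension $\tilde A_{\mu+zM} \in \mathcal{L}(\H^1_\sigma(\IR^d),(\H^1_\sigma(\IR^d))^*)$ defined by $\langle \tilde A_{\mu+zM} u, v\rangle := \fa_{\mu+zM}(u,v)$. Since $\fa$ is affine in the coefficient, $z \mapsto 1 + \tilde A_{\mu+zM}$ is entire; by G\r{a}rding's inequality and Lax--Milgram it takes values in isomorphisms $\H^1_\sigma(\IR^d) \to (\H^1_\sigma(\IR^d))^*$, so $(1+\tilde A_{\mu+zM})^{-1} \colon (\H^1_\sigma(\IR^d))^* \to \H^1_\sigma(\IR^d)$ is a uniformly bounded, holomorphic family in $z$. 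Restricted to $\L^2_\sigma(\IR^d)\hookrightarrow(\H^1_\sigma(\IR^d))^*$, it coincides with the $\L^2$-resolvent $(1+A_{\mu+zM})^{-1}$.

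Now define $F(z) := A_{\mu+zM}^{1/2}\circ(1+\tilde A_{\mu+zM})^{-1}$; composition with Theorem~\ref{Thm: Kato} shows that $F(z) \in \mathcal{L}((\H^1_\sigma(\IR^d))^*,\L^2_\sigma(\IR^d))$ with a uniform bound in $z$. On the dense subspace $\L^2_\sigma(\IR^d) \subset (\H^1_\sigma(\IR^d))^*$, $F(z)$ equals the bounded operator $f(A_{\mu+zM})$ for $f(\lambda) = \lambda^{1/2}(1+\lambda)^{-1}$, a bounded holomorphic function on any sector of opening $<\pi$, and the Dunford integral
\begin{equation*}
 f(A_{\mu+zM}) = \frac{1}{2\pi\ii}\int_{\Gamma} f(\lambda)(\lambda - A_{\mu+zM})^{-1}\,\d\lambda
\end{equation*}
over a suitable sector boundary $\Gamma$ converges in $\mathcal{L}(\L^2_\sigma(\IR^d))$ and depends holomorphically on $z$ through the resolvents. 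Weak holomorphy on the dense subspace $\L^2_\sigma(\IR^d)$, combined with the uniform operator bound, promotes $z \mapsto F(z)$ to a norm-holomorphic map in $\mathcal{L}((\H^1_\sigma(\IR^d))^*,\L^2_\sigma(\IR^d))$ via a standard criterion for operator-valued holomorphy. Finally, the identity $A_{\mu+zM}^{1/2} = F(z) \circ (1+\tilde A_{\mu+zM})$ exhibits $z \mapsto A_{\mu+zM}^{1/2}$ as the composition of two holomorphic operator-valued maps into $\mathcal{L}(\H^1_\sigma(\IR^d),\L^2_\sigma(\IR^d))$, proving the theorem. The main obstacle is the extension of holomorphy from $\L^2_\sigma$ to $(\H^1_\sigma)^*$, which hinges on the uniform $\H^1_\sigma \to \L^2_\sigma$ bound granted by Theorem~\ref{Thm: Kato}.
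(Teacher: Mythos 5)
Your argument is correct, but it takes a genuinely different route from the paper. The paper expands the resolvent $(1+t^2A_{\mu_z})^{-1}$ in a Neumann series around $\mu_0$ using the uniform bounds for $(1+t^2\cA_{\mu_0})^{-1}\cP\divergence$, inserts this into the Balakrishnan integral $A_{\mu_z}^{1/2}u=\tfrac{2}{\pi}\int_0^\infty A_{\mu_z}(1+t^2A_{\mu_z})^{-1}u\,\d t$, and then carries out a delicate interchange of the improper $t$-integral with the series; this interchange is first justified in $\H^{-1}_\sigma(\IR^d)$ via a duality argument that uses the square root property for both $A_{\mu_0}$ and $A_{\mu_0}^*$ to produce the majorant $\min\{1,t^{-2}\}$, after which Cauchy estimates on the Taylor coefficients $T_n$ combined with the uniform Kato estimate upgrade the resulting power series to convergence in $\L^2_\sigma(\IR^d)$. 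You instead factorize $A^{1/2}_{\mu_z}=F(z)\circ(1+\tilde A_{\mu_z})$ with $F(z)=f(A_{\mu_z})$, $f(\lambda)=\lambda^{1/2}(1+\lambda)^{-1}$, obtain holomorphy of $F$ on $\L^2_\sigma(\IR^d)$ directly from the Dunford integral and the holomorphic dependence of the resolvents on the affinely varying form, and extend to $(\H^1_\sigma(\IR^d))^*$ by the uniform bound. Both proofs ultimately invoke the same weak-to-norm holomorphy criterion (the paper's reference \cite[Prop.~A.3]{Vector-valued-Laplace-Trafo}) and both hinge on the uniformity of the Kato estimate over $\{\mu+zM:|z|<r\}$, which holds because the constant in Theorem~\ref{Thm: Kato} depends only on $d$ and common ellipticity constants; note, however, that you only need the upper bound $\|A^{1/2}_{\mu_z}u\|_{\L^2}\lesssim\|\nabla u\|_{\L^2}$, whereas the paper's interchange argument also uses the reverse inequality and the adjoint. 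Your approach is shorter and avoids both the Balakrishnan representation and the interchange of limits; the paper's approach yields in addition an explicit power series with identifiable coefficients $T_n$. The only points you should make explicit are the uniform sectoriality of $A_{\mu_z}$ along the contour (needed for the dominated passage of holomorphy through the Dunford integral) and the holomorphy of $z\mapsto(\lambda-A_{\mu_z})^{-1}$, both of which follow from the same Lax--Milgram/Neumann-series considerations you already set up for $(1+\tilde A_{\mu_z})^{-1}$.
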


This kind of holomorphy of $\mu \mapsto A_{\mu}^{1/2}$ implies the above-mentioned local Lipschitz property for small perturbations which reads as follows.

\begin{corollary}
\label{cor: Lipschitz estimate}
Let $\mu_1 \in \cO$. Then there exists $\delta>0$ and a constant $C>0$ depending only on $(\mu_1)_\bullet, (\mu_1)^\bullet, d$ and $\delta$ such that for all $\mu_2\in \cO$ with $\| \mu_2 - \mu_1 \|_{\L^{\infty} (\IR^d ; \cL(\IC^{d \times d}))}<\delta$ we have
\begin{align*}
 \| A_{\mu_2}^{1/2} - A_{\mu_{1}}^{1/2} \|_{\cL(\H^1_{\sigma} , \L^2_{\sigma})} \leq C \| \mu_2 - \mu_1 \|_{\L^{\infty} (\IR^d ; \cL(\IC^{d \times d}))}.
\end{align*}
\end{corollary}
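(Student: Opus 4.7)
The plan is to deduce the Lipschitz estimate from the holomorphy statement of Theorem~\ref{Thm: Holomorphic dependence} via a standard Cauchy integral. The key preparatory observation is that the Kato bound of Theorem~\ref{Thm: Kato} is uniform on a small $\L^\infty$-ball around $\mu_1$: if $\widetilde{\mu}\in\L^\infty(\IR^d;\cL(\IC^{d\times d}))$ satisfies $\|\widetilde{\mu}-\mu_1\|_{\L^\infty}\leq (\mu_1)_\bullet/2$, then subtracting the perturbation from the Gårding inequality for $\mu_1$ shows that $\widetilde{\mu}$ still fulfills Assumption~\ref{Ass: Coefficients} with constants $(\mu_1)_\bullet/2$ and $(\mu_1)^\bullet+(\mu_1)_\bullet/2$. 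Consequently $\widetilde{\mu}\in\cO$ and $\|A_{\widetilde{\mu}}^{1/2}\|_{\cL(\H^1_\sigma,\L^2_\sigma)} \leq C_0$ for a constant $C_0 = C_0(d,(\mu_1)_\bullet,(\mu_1)^\bullet)$.

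Now fix $\delta\in\bigl(0,(\mu_1)_\bullet/8\bigr)$ and let $\mu_2\in\cO$ satisfy $\eta := \|\mu_2-\mu_1\|_{\L^\infty}<\delta$; the case $\eta=0$ is trivial, so assume $\eta>0$ and set $M := (\mu_2-\mu_1)/\eta$, so that $\|M\|_{\L^\infty}=1$. For every $|z|\leq 2\delta$ the coefficients $\mu_1+zM$ fall under the preparatory observation and hence lie in $\cO$; Theorem~\ref{Thm: Holomorphic dependence}, applied at each base point along the complex line, then shows that
\[ f(z) := A^{1/2}_{\mu_1+zM}, \qquad |z|<2\delta, \]
is a holomorphic $\cL(\H^1_\sigma,\L^2_\sigma)$-valued function, uniformly bounded in norm by $C_0$ on the closed disk $\{|z|\leq 2\delta\}$, with $f(0) = A^{1/2}_{\mu_1}$ and $f(\eta) = A^{1/2}_{\mu_2}$.

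Finally, Cauchy's integral formula applied on the circle $|z|=2\delta$ gives
\[ f(\eta)-f(0) = \frac{\eta}{2\pi\ii}\oint_{|z|=2\delta}\frac{f(z)}{z(z-\eta)}\,\d z, \]
and bounding the integrand via $|z-\eta|\geq 2\delta-\eta\geq\delta$ yields the asserted Lipschitz estimate with constant $C = C_0/\delta$. The entire argument reduces to combining the uniform Kato bound of Theorem~\ref{Thm: Kato} with the classical Cauchy estimate for holomorphic Banach-space-valued functions, and I anticipate no substantive obstacle beyond the preparatory observation, which guarantees that the holomorphic extension from Theorem~\ref{Thm: Holomorphic dependence} is available on a disk whose radius depends solely on the initial ellipticity constants.
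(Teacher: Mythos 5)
Your proof is correct and takes essentially the same route as the paper: a uniform Kato bound on a small $\L^\infty$-ball around $\mu_1$ (obtained exactly as in your preparatory observation, by absorbing the perturbation into the G\r{a}rding inequality), the holomorphy of Theorem~\ref{Thm: Holomorphic dependence} along the complex line through $\mu_1$ and $\mu_2$, and a classical one-complex-variable estimate. The only (cosmetic) difference is the final step --- the paper normalizes $A_{\mu_z}^{1/2}u - A_{\mu_1}^{1/2}u$ and invokes the Schwarz lemma on the unit disk, whereas you apply Cauchy's integral formula on the circle $|z|=2\delta$; both yield $C \simeq C_0/\delta$, and your contour is legitimate because the preparatory observation in fact gives holomorphy on the larger disk $|z|<(\mu_1)_\bullet/2$, so that $|z|=2\delta$ lies strictly inside the domain of holomorphy.
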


Let us mention that the generalized Stokes operator arises in the theory of fluid mechanics as the linearization of non-Newtonian fluids, see, e.g.,~\cite[Sec.~12.1]{Pruss_Simonett}. In this context, symmetric coefficients are of particular interest and the square root property can be established by \textit{Kato's second representation theorem}~\cite{Kato_perturbation} using pure abstract reasoning. However, already for elliptic operators in divergence form, there is no known proof that establishes the Lipschitz estimate in Corollary~\ref{cor: Lipschitz estimate} for symmetric coefficients by remaining in the class of operators with symmetric coefficients. All existing proofs, such as the one presented here, use in a crucial way the square root property for operators with nonsymmetric coefficients. Thus, this Lipschitz estimate can be regarded as nontrivial even if one is only interested in symmetric coefficients.  

The outline of this paper is as follows. In Section~\ref{Sec: L2-theory of the generalized Stokes resolvent} we collect elementary $\L^2$-estimates on the resolvent of the generalized Stokes operator and then establish the above-mentioned nonlocal off-diagonal estimates of polynomial order. Afterwards we begin to adapt the proof of the square root property for elliptic operators as it was presented in~\cite{ISEM}. In Section~\ref{Sec: Reduction to a square function estimate} we shortly recapitulate the reduction to a square function estimate which is then established in Sections~\ref{Sec: Principle part approximation} and~\ref{Sec: The Carleson measure argument}. Section~\ref{Sec: Principle part approximation} presents how the classical principle part approximation can be adapted to these nonlocal off-diagonal estimates of polynomial order, leading --- as in the elliptic situation --- to a reduction to a Carleson measure estimate. This is established in Section~\ref{Sec: The Carleson measure argument} where a $T(b)$-type test function has to be constructed. Compared to elliptic operators there will be an additional constraint in this construction as it has to cope with two ingredients at the same time --- divergence-freeness and locality. Here, a suitable Bogovski\u{\i} correction will be crucial. In the final Section~\ref{Sec: Holomorphic dependence} we establish Theorem~\ref{Thm: Holomorphic dependence} and Corollary~\ref{cor: Lipschitz estimate}.

\subsection*{Acknowledgments}

We would like to thank Moritz Egert for suggesting to us to look into the holomorphic dependencies as well as the Lipschitz estimates as a possible application of the square root property. The first author was supported by \textit{Studienstiftung des deutschen Volkes}.

\subsection*{Notation}

Throughout this article, the dimension is denoted by $d$ and satisfies $d \geq 2$. We will write $\IN$ for the set of all positive integers and $\IN_0$ for the set of all non-negative integers. The norms of $\IC$ and $\IC^d$ will be denoted by $\lvert \, \bigdot \, \rvert$, all other norms will be labeled accordingly. We will use the euclidean norm on $\IC^d$ and $\IC^{d \times d}$. The open ball centered in $x \in \IR^d$ and with radius $r > 0$ is denoted by $B(x , r)$. Given an open ball $B$ and $\ell \in \IN_0$, the $\ell$-th dyadic annulus around $B$ is defined by
\begin{align*}
 C_{\ell} (B) \coloneqq B(x , 2^{\ell + 1} r) \setminus \overline{B(x , 2^{\ell} r)} \quad \text{if} \quad \ell \geq 1 \quad \text{and} \quad C_0 (B) \coloneqq 2B.
\end{align*}
Given a cube $Q \subseteq \IR^d$ we denote its sidelength by $\ell (Q)$. As for balls, we define 
\begin{align*}
 C_{\ell} (Q) \coloneqq 2^{\ell + 1} Q \setminus \overline{2^{\ell} Q} \quad \text{if} \quad \ell \geq 1 \quad \text{and} \quad C_0 (Q) \coloneqq 2Q.
\end{align*}

In estimates it will be convenient to write $\alpha \lesssim \beta$ or $\alpha\gtrsim \beta$ if there exists $C > 0$, depending only on parameters not at stake, such that $\alpha \leq C \beta$ or $C\alpha \geq \beta$. We will write $\alpha \simeq \beta$ if $\alpha \lesssim \beta$ and $\alpha \gtrsim \beta$. In some situations it will be more handy to keep the notation $\alpha \leq C \beta$ or $C\alpha \geq \beta$. In this case $C$ is generic. \par
The characteristic function of a set $A \subseteq \IR^d$ is denoted by $\ind_A$. If $A$ is measurable we denote its Lebesgue measure by $\lvert A \rvert$. In the case $0 < \lvert A \rvert < \infty$, we define the mean value of $f \in \L^1_{\loc} (\IR^d)$ as
\begin{align*}
 (f)_A \coloneqq \fint_A f \, \d x \coloneqq \frac{1}{\lvert A \rvert} \int_A f \, \d x.
\end{align*}

Important function space of divergence-free vector fields are given by
\begin{align*}
 \L^2_{\sigma} (\IR^d) &\coloneqq \{ f \in \L^2 (\IR^d ; \IC^d) \colon \divergence(f) = 0 \text{ in sense of distributions} \} \\
 \H^1_{\sigma} (\IR^d) &\coloneqq \{ f \in \H^1 (\IR^d ; \IC^d) \colon \divergence(f) = 0 \}.
\end{align*}
Recall that 
\begin{align*}
 \C_{c , \sigma}^{\infty} (\IR^d) \coloneqq \{ 
\varphi \in \C_c^{\infty} (\IR^d ; \IC^d) \colon \divergence(\varphi) = 0 \}
\end{align*}
is dense in $\L^2_{\sigma} (\IR^d)$ as well as in $\H^1_{\sigma} (\IR^d)$, see Lemma~II.2.5.4 and Lemma~II.2.5.5 in~\cite{Sohr}. The space of all bounded and antilinear functionals from $\H^1_{\sigma} (\IR^d) \to \IC$ is denoted by $\H^{-1}_{\sigma} (\IR^d)$. Given $u \in \H^1 (\IR^d ; \IC^d)$, we regard its gradient as the matrix given as the transpose of the Jacobian of $u$, i.e., $\nabla u = (\partial_{\alpha} u_i)_{\alpha , i = 1}^d.$ \par
For $\alpha \in (0 , 1)$ the fractional Sobolev space with differentiability $\alpha$ will be denoted by $\H^{\alpha}(\IR^d)$. We write its norm in integral form 
\begin{align*}
 \| f \|_{\H^{\alpha}} \coloneqq \| f \|_{\L^2} + \| f \|_{\dot \H^{\alpha}} \quad \text{where} \quad \| f \|_{\dot \H^{\alpha}} \coloneqq \bigg(\iint_{\IR^d \times \IR^d} \frac{\lvert f(x) - f(y) \rvert^2}{\lvert x - y \rvert^{d + 2 \alpha}} \, \d x \, \d y \bigg)^{\frac{1}{2}}.
\end{align*}
For $f \in \H^{\alpha} (\IR^d)$ we will need to calculate a counterpart of the homogeneous $\dot \H^{\alpha}$-seminorm on a measurable set $A \subseteq \IR^d$ which we write as
\begin{align*}
 \| f \|_{\dot \H^{\alpha} (A)} \coloneqq \bigg(\iint_{A \times A} \frac{\lvert f(x) - f(y) \rvert^2}{\lvert x - y \rvert^{d + 2 \alpha}} \, \d x \, \d y \bigg)^{\frac{1}{2}}.
\end{align*}

\section{$\L^2$-theory of the generalized Stokes resolvent}
\label{Sec: L2-theory of the generalized Stokes resolvent}


\noindent This section is devoted to properties of the generalized Stokes operator. It contains elementary properties that directly follow from classical form theory. The main result of this section is Proposition~\ref{Prop: off-diagonal type estimates for div} which can be seen as nonlocal off-diagonal estimates for the generalized Stokes resolvent. We start by introducing an extension of the generalized Stokes operator in $\H^{-1}_{\sigma} (\IR^d)$.

\begin{definition}
    The \textit{weak (generalized) Stokes operator} $\cA : \H^1_{\sigma} (\IR^d) \subseteq \H^{-1}_{\sigma} (\IR^d) \to \H^{-1}_{\sigma} (\IR^d)$ associated to the sesquilinear form $\fa$ is defined by
    \begin{align*}
        \langle \cA u , v \rangle_{ \H^{-1}_{\sigma},\H^1_{\sigma}} \coloneqq \fa (u , v) \qquad (u,v\in \H^1_{\sigma} (\IR^d)).
    \end{align*}
\end{definition}

Observe that $u\in \dom(A)$ if and only if $\cA u \in \L^2_{\sigma}(\IR^d)$. Hence, $\cA$ is an extension of $A$. For later reasoning it will be important to deduce a representation of $\cA$ as a product of the operators $\divergence$, $\mu$ and $\nabla$. For fixed $x$, the coefficients may be  interpreted as a linear operator from $\IC^{d \times d}$ to $\IC^{d \times d}$ via
\begin{align*}
 \mu(x) G \coloneqq \Big( \sum_{j , \beta = 1}^d \mu_{\alpha \beta}^{i j} (x) G_{\beta j} \Big)_{\alpha, i = 1}^d \qquad (G \in \IC^{d \times d}).
\end{align*}
If one defines the dot-product of two matrices $G , H \in \IC^{d \times d}$ as $G \cdot H \coloneqq \sum_{\alpha , i = 1}^d G_{\alpha i} H_{\alpha i}$, then the sesquilinear form defined in~\eqref{Eq: Formal Stokes operator} can be written as
\begin{align}
\label{Eq: Sesquilinear form without sums}
 \fa(u,v) = \int_{\IR^d} \mu \nabla u \cdot \overline{\nabla v} \, \d x.
\end{align}
Given $F \in \L^2 (\IR^d ; \IC^{d \times d})$, its weak divergence is defined, as usual, as an element in $\H^{-1} (\IR^d ; \IC^d) \coloneqq (\H^1 (\IR^d ; \IC^d))^{\prime}$ via
\begin{align*}
 \langle \divergence (F) , w \rangle_{\H^{-1} , \H^1} \coloneqq - \sum_{\alpha , i = 1}^d \int_{\IR^d} F_{\alpha i} \overline{\partial_{\alpha} w_i} \, \d x \qquad (w \in \H^1 (\IR^d ; \IC^d)).
\end{align*}
To interprete the function $v \in \H^1_{\sigma} (\IR^d)$ in~\eqref{Eq: Sesquilinear form without sums} as an element in $\H^1 (\IR^d ; \IC^d)$ we introduce the canonical inclusion $\iota : \H^1_{\sigma} (\IR^d) \to \H^1 (\IR^d ; \IC^d)$. If $\cP \coloneqq \iota' : \H^{-1} (\IR^d ; \IC^d) \to \H^{-1}_{\sigma} (\IR^d)$ denotes its adjoint, then for $u , v \in \H^1_{\sigma} (\IR^d)$ we have
\begin{align*}
            \langle \cA u , v \rangle_{\H^{-1}_{\sigma},\H^1_{\sigma}} = \int_{\IR^d} \mu \nabla u \cdot \overline{\nabla \iota(v)} \, \d x = \langle -\divergence(\mu \nabla u) , \iota (v) \rangle_{\H^{-1} , \H^1} = \langle -\cP \divergence(\mu \nabla u) , v \rangle_{\H^{-1}_{\sigma} , \H^1_{\sigma}}.
\end{align*}
We record this representation in the following lemma.

\begin{lemma}
\label{Lem: Product structure}
We have
\begin{align*}
 \cA u = - \cP \divergence(\mu \nabla u) \qquad (u \in \H^1_{\sigma} (\IR^d)).
\end{align*}
\end{lemma}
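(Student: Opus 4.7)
The lemma is essentially a matter of definition-chasing, and the required computation is in fact already spelled out in the displayed equation immediately preceding the statement. The plan is simply to formalize that computation and promote it from an identity of values to an identity of operators.

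I would fix $u \in \H^1_{\sigma}(\IR^d)$ and first check that the right-hand side makes sense: since $\esssup_{x \in \IR^d} \|\mu(x)\|_{\cL(\IC^{d \times d})} \leq \mu^{\bullet}$ by Assumption~\ref{Ass: Coefficients}, the matrix-valued function $\mu \nabla u$ lies in $\L^2(\IR^d; \IC^{d \times d})$, so $\divergence(\mu \nabla u)$ is a well-defined element of $\H^{-1}(\IR^d; \IC^d)$, and consequently $\cP \divergence(\mu \nabla u) \in \H^{-1}_{\sigma}(\IR^d)$.

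Next, for an arbitrary $v \in \H^1_{\sigma}(\IR^d)$ I would apply the sequence of definitions already indicated: the definition of $\cA$ gives $\langle \cA u, v\rangle_{\H^{-1}_{\sigma}, \H^1_{\sigma}} = \fa(u,v)$; the compact reformulation \eqref{Eq: Sesquilinear form without sums} rewrites this as $\int_{\IR^d} \mu \nabla u \cdot \overline{\nabla \iota(v)}\, \d x$, where I use that $\iota$ is a pointwise inclusion so $\nabla v = \nabla \iota(v)$; the definition of weak divergence on $\H^1(\IR^d; \IC^d)$ turns this into $-\langle \divergence(\mu \nabla u), \iota(v)\rangle_{\H^{-1}, \H^1}$; and the definition $\cP = \iota'$ converts it finally into $-\langle \cP \divergence(\mu \nabla u), v\rangle_{\H^{-1}_{\sigma}, \H^1_{\sigma}}$. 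Since $v$ was arbitrary, the identification of $\H^{-1}_{\sigma}(\IR^d)$ with the space of bounded antilinear functionals on $\H^1_{\sigma}(\IR^d)$ yields $\cA u = -\cP \divergence(\mu \nabla u)$.

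No genuine obstacle arises in this proof; every step is a single application of a definition, and the only ingredient beyond bookkeeping is the $\L^\infty$-bound on $\mu$ which guarantees that $\divergence(\mu \nabla u)$ is a well-defined tempered distribution in $\H^{-1}$.
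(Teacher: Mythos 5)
Your proposal is correct and follows exactly the paper's argument: the paper proves the lemma by the very chain of identities displayed immediately before its statement (definition of $\cA$, the compact form \eqref{Eq: Sesquilinear form without sums}, the definition of the weak divergence, and $\cP = \iota'$), which is what you reproduce. The added remark that $\mu \nabla u \in \L^2$ by the $\L^\infty$-bound on $\mu$ is a harmless and correct piece of bookkeeping.
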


\begin{remark}
The representation in Lemma~\ref{Lem: Product structure} was already proven by Mitrea and Monniaux for the classical Stokes operator (with $\divergence (\mu \nabla u)$ replaced by $\Delta u$) in bounded Lipschitz domains, see~\cite[Prop.~4.5]{Mitrea_Monniaux}.
\end{remark}

The control on the essential supremum of the $\cL(\IC^{d \times d})$-norm of $\mu$ in Assumption~\ref{Ass: Coefficients} can precisely be understood as $\lvert \mu (x) G \rvert_{\IC^{d \times d}} \leq \mu^{\bullet} \lvert G \rvert_{\IC^{d \times d}}$ for $G \in \IC^{d \times d}$ and $x\in\IR^d$. Thus
\begin{align*}
 \lvert \fa (u , v) \rvert \leq \mu^{\bullet} \| \nabla u \|_{\L^2} \| \nabla v \|_{\L^2} \qquad (u , v \in \H^1 (\IR^d ; \IC^{d})),
\end{align*}
so that $\fa$ is, in particular, a bounded sesquilinear form on $\H^1_{\sigma} (\IR^d)$. The G\r{a}rding-type estimate in Assumption~\ref{Ass: Coefficients} exactly means that $\fa$ is coercive. As a consequence, the uniform resolvent bounds in the next proposition, in particular the sectoriality of the generalized Stokes operator in $\L^2_{\sigma} (\IR^d)$, follow from classical form theory involving the Lax--Milgram lemma. The proof will be omitted.

For the rest of this section, we will denote a sector in the complex plane with opening angle $2 \theta$, $\theta \in (0 , \pi)$, around the positive real line as
\begin{align*}
 \S_{\theta} \coloneqq \{ z \in \IC \setminus \{ 0 \} : \lvert \arg (z) \rvert < \theta \}.
\end{align*}

\begin{proposition}
\label{Prop: L2 resolvent bounds}
    Let $\mu$ satisfy Assumption~\ref{Ass: Coefficients}. Then there exists $\omega\in (\pi/2,\pi)$ depending only on $d$, $\mu_{\bullet}$ and $\mu^{\bullet}$ such that $\S_\omega\subseteq \rho(-A) \cap \rho(- \cA)$ and for all $\theta\in (0,\omega)$ there exists $C>0$ such that for all $\lambda \in \S_\theta$ and all $f\in \L^2_\sigma(\IR^d)$ we have
    \begin{align*}
        |\lambda|\|(\lambda + A)^{-1}f\|_{\L^2} + |\lambda|^\frac{1}{2}\|\nabla(\lambda + A)^{-1}f\|_{\L^2} \leq C\|f\|_{\L^2}.
    \end{align*}
    Moreover, there exists $C>0$ such that for all $\lambda \in \S_\theta$ and all $F\in \L^2(\IR^d;\IC^{d\times d})$ we have
    \begin{align*}
        |\lambda|^\frac{1}{2}\|(\lambda + \cA)^{-1}\cP\divergence(F)\|_{\L^2} + \|\nabla(\lambda + \cA)^{-1}\cP\divergence(F)\|_{\L^2} \leq C\|F\|_{\L^2}.
    \end{align*}
    The constant $C$ only depends on $\theta$, $d$, $\mu^{\bullet}$ and $\mu_{\bullet}$.
\end{proposition}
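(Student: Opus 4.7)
The plan is to deduce the proposition from classical form theory. By Assumption~\ref{Ass: Coefficients}, the form $\fa$ is bounded, $|\fa(u,v)| \leq \mu^{\bullet}\|\nabla u\|_{\L^2}\|\nabla v\|_{\L^2}$, and strictly accretive, $\Re \fa(u,u) \geq \mu_{\bullet}\|\nabla u\|_{\L^2}^2$. These two inequalities together show that the numerical range $\{\fa(u,u)/\|\nabla u\|_{\L^2}^2 : u \in \H^1_\sigma(\IR^d)\setminus\{0\}\}$ is contained in a closed sub-sector $\overline{\S_{\theta_0}}$ of the open right half-plane, with $\theta_0 \in (0, \pi/2)$ depending only on $\mu_\bullet$ and $\mu^\bullet$. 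Consequently $\fa$ is a bounded, coercive, sectorial sesquilinear form on $\H^1_\sigma(\IR^d)$, and we fix once and for all some $\omega \in (\pi/2, \pi - \theta_0)$.

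For $\lambda \in \S_\theta$ with $\theta \in (0, \omega)$, consider the shifted form $\fa_\lambda(u,v) := \lambda \scal{u}{v}_{\L^2} + \fa(u,v)$ on $\H^1_\sigma(\IR^d)$. The condition $\theta + \theta_0 < \pi$ permits the standard elementary construction of a rotation $e^{-\ii\psi}$, with $\psi = \psi(\lambda,\theta,\theta_0)$ lying in a uniformly controlled range, so that the rotated form satisfies
\begin{align*}
 \Re\bigl( e^{-\ii\psi}\fa_\lambda(u,u)\bigr) \geq c\bigl(|\lambda|\|u\|_{\L^2}^2 + \|\nabla u\|_{\L^2}^2\bigr) \qquad (u \in \H^1_\sigma(\IR^d)),
\end{align*}
with $c = c(\theta, \mu_\bullet, \mu^\bullet) > 0$. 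The Lax--Milgram lemma, applied on $\H^1_\sigma(\IR^d)$ equipped with the equivalent $\lambda$-dependent norm $(|\lambda|\|u\|_{\L^2}^2 + \|\nabla u\|_{\L^2}^2)^{1/2}$, then produces a continuous inverse of $\lambda + \cA \colon \H^1_\sigma(\IR^d) \to \H^{-1}_\sigma(\IR^d)$ whose operator norm is bounded uniformly in $\lambda \in \S_\theta$. In particular $\S_\omega \subseteq \rho(-\cA)$, and since $\cA$ extends $A$, also $\S_\omega \subseteq \rho(-A)$.

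The claimed bounds follow by testing. Given $f \in \L^2_\sigma(\IR^d)$ and $u := (\lambda + A)^{-1} f$, pairing $\lambda u + Au = f$ with $u$ in $\L^2_\sigma(\IR^d)$ yields $\lambda \|u\|_{\L^2}^2 + \fa(u,u) = \scal{f}{u}_{\L^2}$; applying $\Re(e^{-\ii\psi}\,\bigdot)$ to both sides together with the coercivity above produces $|\lambda|\|u\|_{\L^2}^2 + \|\nabla u\|_{\L^2}^2 \lesssim \|f\|_{\L^2}\|u\|_{\L^2}$, from which the first pair of bounds follows. For $F \in \L^2(\IR^d; \IC^{d\times d})$, the antilinear functional $v \mapsto -\int_{\IR^d} F \cdot \overline{\nabla\iota(v)}\,\d x$ is an element of $\H^{-1}_\sigma(\IR^d)$ of norm at most $\|F\|_{\L^2}$, and coincides with $\cP\divergence(F)$ by the construction preceding Lemma~\ref{Lem: Product structure}. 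Setting $u := -(\lambda + \cA)^{-1}\cP\divergence(F)$ and testing gives $\lambda\|u\|_{\L^2}^2 + \fa(u,u) = -\int_{\IR^d} F \cdot \overline{\nabla u}\,\d x$; the same rotation--real-part step followed by Young's inequality to absorb $\tfrac12\|\nabla u\|_{\L^2}^2$ into the left-hand side then yields $|\lambda|\|u\|_{\L^2}^2 + \|\nabla u\|_{\L^2}^2 \lesssim \|F\|_{\L^2}^2$, which is the second pair. The entire argument is essentially textbook sectorial form theory; the only point requiring genuine attention is the uniform choice of $\psi$ on $\S_\theta$, which is handled by the elementary geometric observation that $\theta + \theta_0 < \pi$.
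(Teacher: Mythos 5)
Your argument is correct and is precisely the ``classical form theory involving the Lax--Milgram lemma'' that the paper invokes while omitting the proof: sectoriality of the numerical range from boundedness plus the G\r{a}rding inequality, a rotation to obtain uniform coercivity of the shifted form on $\S_\theta$, Lax--Milgram in the $\lambda$-adapted norm, and testing the equation against $u$ to extract both pairs of bounds (including the correct identification of $\cP\divergence(F)$ as a functional of $\H^{-1}_\sigma$-norm at most $\|F\|_{\L^2}$). Apart from an inconsequential sign in the identity $\lambda\|u\|_{\L^2}^2+\fa(u,u)=\int_{\IR^d}F\cdot\overline{\nabla u}\,\d x$, which disappears upon taking absolute values, the proof is complete and matches the intended one.
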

Finally, we present an off-diagonal type estimate for the operator family $((\lambda + \cA)^{-1}\cP\divergence)_{\lambda\in \S_\omega}$ which play a key role in the proof of the square root property below.
\begin{proposition}
\label{Prop: off-diagonal type estimates for div}
    There exists $\omega \in (\pi/2,\pi)$ such that for all $\theta \in (0,\omega)$ and $\nu \in (0, d+2)$ there exists $C>0$ such that for all balls $B=B(x_0,r)$, $\lambda \in \S_\theta$ and $F\in \L^2(\IR^d; \IC^{d \times d})$ the following nonlocal off-diagonal estimates are valid
    \begin{align*}
        \int_{B} |(\lambda + \cA)^{-1} \cP \divergence(F)|^2\,\d x \leq \frac{C}{|\lambda|}\sum_{n=0}^\infty\sum_{k=0}^n  \bigg(\prod_{s=0}^{n-k-1} \frac{C}{|\lambda|r^22^{2s}}\bigg)2^{-\nu k}\int_{2^{n+1}B} |F|^2_{\IC^{d \times d}} \,\d x.
    \end{align*}
    The constant $C$ only depends on $\theta$, $\nu$, $d$, $\mu^{\bullet}$ and $\mu_{\bullet}$.
\end{proposition}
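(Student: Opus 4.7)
Plan of proof. My starting point is the nonlocal polynomial off-diagonal estimate for the generalized Stokes resolvent established by the second author in \cite{Tolksdorf_off-diagonal}. It provides a version of the present estimate with a smaller, fixed polynomial decay rate $\nu_0>0$ that does not suffice for the purposes below. The plan is to boost the decay rate up to any $\nu<d+2$ by an iterative argument that couples the baseline estimate to a nonlocal Caccioppoli-type inequality at dyadic scales. The base case $n=k=0$ of the asserted double sum is an immediate consequence of the $\L^2$-bound of Proposition~\ref{Prop: L2 resolvent bounds}, which yields $|\lambda|\|u\|_{\L^2}^2\lesssim \|F\|_{\L^2}^2$ for $u\coloneqq (\lambda+\cA)^{-1}\cP\divergence(F)$.

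The main intermediate step is a Caccioppoli-type inequality of the form
\begin{align*}
 \int_{2^j B}|u|^2\,\d x \lesssim \frac{1}{|\lambda|(2^j r)^2}\int_{2^{j+1} B}|u|^2\,\d x + \frac{1}{|\lambda|}\int_{2^{j+1} B}|F|^2_{\IC^{d\times d}}\,\d x
\end{align*}
for every $j\in \IN_0$. I would derive this by testing the weak equation $\lambda u + \cA u = \cP\divergence(F)$ against $\eta^2 u - w$, where $\eta$ is a smooth cutoff that equals $1$ on $2^j B$ and is supported in $2^{j+1}B$, and $w$ is a Bogovski\u{\i} correction supported in the annulus $2^{j+1}B\setminus 2^j B$ chosen so that $\divergence(\eta^2 u-w)=0$. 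Scale-invariant $\L^2$- and $\dot{\H}^1$-bounds for Bogovski\u{\i}'s operator on dyadic annuli convert the resulting error terms into the above inequality. The pressure implicit in $\cA$ drops out because the final test function is divergence-free.

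The actual iteration runs outwards from $B$: the $s$-th application of the Caccioppoli inequality goes from $2^s B$ to $2^{s+1}B$ and contributes the prefactor $\frac{C}{|\lambda|r^2 2^{2s}}$. After $n-k$ steps the accumulated prefactor is exactly $\prod_{s=0}^{n-k-1}\frac{C}{|\lambda|r^2 2^{2s}}$ in front of $\int_{2^{n-k}B}|u|^2\,\d x$, while the inhomogeneities of the form $\frac{1}{|\lambda|}\int_{2^{s+1}B}|F|^2\,\d x$ spawned at each step are readily absorbed into the right-hand side of the target. At the outermost scale $2^{n-k}B$, I would invoke the baseline estimate from \cite{Tolksdorf_off-diagonal}, whose $k$-th far-field contribution integrates $F$ over $C_k(2^{n-k}B)\subseteq 2^{n+1}B$ with the polynomial factor $2^{-\nu k}$. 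Summing over all admissible pairs $0\leq k\leq n$ reproduces the double sum in the statement, and the ceiling $\nu<d+2$ reflects the range for which the baseline estimate of \cite{Tolksdorf_off-diagonal} can be invoked together with the summability of the outer series.

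The principal obstacle will be the Caccioppoli step, which does not follow the classical template for elliptic operators because $\cA$ is nonlocal. The pressure gradient cannot be localized by a simple cutoff, and the Bogovski\u{\i} correction $w$ on the annulus $2^{j+1}B\setminus 2^j B$ is the indispensable device that makes the test function admissible while preserving scale invariance; its $\L^2$- and $\dot{\H}^1$-bounds must hold with constants independent of $2^j r$. A secondary difficulty will be the bookkeeping required to align the truncation scale $2^{n-k}B$ of the outward Caccioppoli iteration with the far-field scale $C_k(2^{n-k}B)$ of the baseline estimate, so that the product $\prod_{s=0}^{n-k-1}$ and the factor $2^{-\nu k}$ appear at precisely the advertised combinations $(n,k)$.
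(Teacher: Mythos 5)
Your overall architecture (a Caccioppoli-type inequality at dyadic scales, iterated outward, producing the product $\prod_{s}\frac{C}{|\lambda|r^2 2^{2s}}$) matches the paper's, but the key intermediate step you rely on is exactly the estimate that is \emph{not} available for the generalized Stokes operator. The scale-by-scale inequality
\begin{align*}
 \int_{2^j B}|u|^2\,\d x \lesssim \frac{1}{|\lambda|(2^j r)^2}\int_{2^{j+1} B}|u|^2\,\d x + \frac{1}{|\lambda|}\int_{2^{j+1} B}|F|^2\,\d x
\end{align*}
is a fully \emph{local} Caccioppoli inequality, and your derivation of it does not go through. Testing against $\eta^2 u - w$ with a Bogovski\u{\i} correction $w$ supported in the annulus $2^{j+1}B\setminus \overline{2^jB}$ does eliminate the pressure, but it creates the terms $\lambda\int u\,\overline{w}$ and $\int \mu\nabla u\cdot\overline{\nabla w}$. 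Using $\|\nabla w\|_{\L^2}\lesssim (2^jr)^{-1}\|u\|_{\L^2(\mathrm{ann})}$ and Poincar\'e, these are of size $|\lambda|\,\|u\|_{\L^2(\mathrm{ann})}^2$ and $\|\nabla u\|_{\L^2(\mathrm{ann})}\,(2^jr)^{-1}\|u\|_{\L^2(\mathrm{ann})}$ respectively; since the annulus lies outside the region where $\eta=1$, neither can be absorbed into the left-hand side, and neither carries the gain factor $(|\lambda|(2^jr)^2)^{-1}$. Moreover, if your local inequality were true, iterating it alone would give off-diagonal decay of order $2^{-cn^2}$ for the Stokes resolvent, which is far stronger than the polynomial decay of order $\nu/2<(d+2)/2$ that this proposition (see Remark~\ref{Rem: Off-diagonals}) establishes and which the introduction flags as the best available; this is a strong indication that the inequality is not merely unproven but false.

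The paper circumvents this by invoking the \emph{nonlocal} Caccioppoli inequality of~\cite[Thm.~1.2]{caccioppoli}, in which both sides are polynomially weighted sums $\sum_k 2^{-\nu k}\int_{2^kB}(\cdots)$ over all dyadic annuli simultaneously; the uncontrollable annulus terms are then absorbed into the tails, and it is this weighted-sum inequality (not a local one) that gets iterated with growing radii. The restriction $\nu<d+2$ is the admissible weight range in that inequality, not a range for which the baseline estimate of~\cite{Tolksdorf_off-diagonal} holds — the latter only provides decay of an order too small for the present purposes, which is precisely why the iteration is needed. You should replace your intermediate local inequality by the weighted-sum version and run the iteration on it; as written, the proof has a genuine gap at its central step.
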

\begin{proof}
    For the sake of clarity, we will write $\lvert \, \bigdot \, \rvert = \lvert \, \bigdot \, \rvert_{\IC^{d \times d}}$ in this proof. \par
    Let $u\in \H^1_\sigma(\IR^d)$ be given by $u \coloneqq (\lambda + \cA)^{-1} \cP \divergence(F)$. Then in~\cite[Thm.~1.2]{caccioppoli}, the following nonlocal Caccioppoli inequality was proven
    \begin{align*}
        &|\lambda|\sum_{k=0}^\infty 2^{-\nu k}\int_{2^kB}|u|^2\,\d x + \sum_{k=0}^\infty 2^{-\nu k}\int_{2^kB}|\nabla u|^2\,\d x \\
        &\quad \leq \sum_{k=0}^\infty 2^{-(\nu + 2) k}\frac{C}{r^2}\int_{2^{k+1}B}|u|^2\,\d x + C\sum_{k=0}^\infty 2^{-\nu k}\int_{2^{k+1}B}|F|^2\,\d x
    \end{align*}
    for some $C >0$ depending only on $\mu_{\bullet},\mu^{\bullet}, \nu$ and dimension $d$. Dropping the series of the gradients, dividing by $|\lambda|$ and estimating $2^{- (\nu + 2) k} \leq 2^{- \nu k}$ turns the estimate into
    \begin{align}
    \label{Eq: Iteration step}
        &\sum_{k=0}^\infty 2^{-\nu k}\int_{2^kB}|u|^2\,\d x \leq \frac{C}{|\lambda|r^2} \sum_{k=0}^\infty 2^{-\nu k} \int_{2^{k+1}B}|u|^2\,\d x + \frac{C}{|\lambda|}\sum_{k=0}^\infty 2^{-\nu k}\int_{2^{k+1}B}|F|^2\,\d x.
    \end{align}
    Observe that on the right-hand side almost the same term as on the left-hand side appears but that $B$ is replaced by $2 B$. Thus, this term can be estimated by using~\eqref{Eq: Iteration step} once again but with $B$ replaced by $2 B$. This leads to
    \begin{align*}
        \sum_{k=0}^\infty 2^{-\nu k}\int_{2^kB}|u|^2\,\d x &\leq \frac{C}{|\lambda|r^2}\frac{C}{|\lambda|(2r)^2} \sum_{k=0}^\infty 2^{-\nu k} \int_{2^{k+2}B}|u|^2\,\d x\\
        &\quad + \frac{C}{|\lambda|}\sum_{\ell=0}^1\sum_{k=0}^\infty \bigg(\prod_{s=0}^{\ell-1} \frac{C}{|\lambda|r^22^{2s}}\bigg)2^{-\nu k}\int_{2^{k+\ell+1}B}|F|^2\,\d x.
    \end{align*}
    Iterating this procedure with growing radii leads in the limit to 
    \begin{align*}
        \sum_{k=0}^\infty 2^{-\nu k}\int_{2^kB}|u|^2\,\d x &\leq   \frac{C}{|\lambda|}\sum_{\ell=0}^\infty \sum_{k=0}^\infty \bigg(\prod_{s=0}^{\ell-1} \frac{C}{|\lambda|r^22^{2s}}\bigg)2^{-\nu k}\int_{2^{k+\ell+1}B}|F|^2\,\d x\\
        &=   \frac{C}{|\lambda|}\sum_{k=0}^\infty\sum_{\ell=0}^\infty  \bigg(\prod_{s=0}^{\ell-1} \frac{C}{|\lambda|r^22^{2s}}\bigg)2^{-\nu k}\int_{2^{k+\ell+1}B}|F|^2\,\d x\\
        &=   \frac{C}{|\lambda|}\sum_{k=0}^\infty\sum_{n=k}^\infty  \bigg(\prod_{s=0}^{n-k-1} \frac{C}{|\lambda|r^22^{2s}}\bigg)2^{-\nu k}\int_{2^{n+1}B}|F|^2\,\d x\\
        &=   \frac{C}{|\lambda|}\sum_{n=0}^\infty\sum_{k=0}^n  \bigg(\prod_{s=0}^{n-k-1} \frac{C}{|\lambda|r^22^{2s}}\bigg)2^{-\nu k}\int_{2^{n+1}B}|F|^2\,\d x. \qedhere
    \end{align*}
\end{proof}
\begin{corollary}
\label{Cor: theta OD}
For all $\nu \in (0, d+2)$ there exists a constant $C>0$ such that for all $t > 0$, balls $B=B(x_0,r)$ with $r \geq t$ and $F\in \L^2(\IR^d; \IC^{d \times d})$ we have
\begin{align*}
 \int_{B}|t(1+t^2 \cA)^{-1}\cP\divergence(F)|^2\,\d x &\leq C\sum_{n=0}^\infty  2^{-\nu n}\int_{C_n(B)}|F|_{\IC^{d \times d}}^2\, \d x.
\end{align*}
The constant $C$ only depends on $\nu$, $d$, $\mu^{\bullet}$ and $\mu_{\bullet}$.
\end{corollary}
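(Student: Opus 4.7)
The plan is to apply Proposition~\ref{Prop: off-diagonal type estimates for div} with the specific choice $\lambda = t^{-2} > 0$ and then reorganise the resulting double sum into the annular form of the conclusion. First, I would exploit the identity $t(1+t^2\cA)^{-1} = t^{-1}(t^{-2}+\cA)^{-1}$, which yields
\begin{align*}
|t(1+t^2\cA)^{-1}\cP\divergence(F)|^2 = |\lambda|\,|(\lambda + \cA)^{-1}\cP\divergence(F)|^2.
\end{align*}
Invoking Proposition~\ref{Prop: off-diagonal type estimates for div} then cancels the prefactor $1/|\lambda|$ exactly, leaving the double sum on the right with no residual $\lambda$-factor.

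Next, the hypothesis $r \geq t$ is equivalent to $|\lambda|r^2 \geq 1$, which is precisely what allows me to discard these factors inside the product and estimate
\begin{align*}
\prod_{s=0}^{n-k-1} \frac{C}{|\lambda|r^2 \,2^{2s}} \leq C^{n-k}\, 2^{-(n-k)(n-k-1)}.
\end{align*}
The super-exponential Gaussian-type decay in $n-k$ obtained this way is the key structural feature of the argument. I would then perform the substitution $j := n-k$ and split $2^{-\nu k} = 2^{-\nu n}\cdot 2^{\nu j}$ to pull out $2^{-\nu n}$; the double sum collapses to
\begin{align*}
C\sum_{n=0}^\infty 2^{-\nu n} \bigg(\sum_{j=0}^n (2^\nu C)^j\, 2^{-j(j-1)}\bigg) \int_{2^{n+1}B}|F|^2 \, \d x,
\end{align*}
and the inner sum in $j$ is bounded uniformly in $n$ by a finite constant depending only on $\nu$ and on the constant from Proposition~\ref{Prop: off-diagonal type estimates for div}.

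Finally, to recast the ball-indexed sum into the annular form of the statement, I would decompose $2^{n+1}B = \bigcup_{\ell=0}^n C_\ell(B)$ (up to a null set), swap the order of summation in $n$ and $\ell$, and evaluate the geometric series $\sum_{n \geq \ell} 2^{-\nu n} \simeq 2^{-\nu \ell}$, producing exactly the stated bound $C\sum_{\ell=0}^\infty 2^{-\nu \ell}\int_{C_\ell(B)}|F|^2 \, \d x$. The main obstacle is really just the careful bookkeeping of the double sum; no new analytic input beyond Proposition~\ref{Prop: off-diagonal type estimates for div} is required, and the hypothesis $r \geq t$ is built into the argument precisely to render the product factors harmless so that the Gaussian-type decay $2^{-j(j-1)}$ drives the convergence of the inner sum.
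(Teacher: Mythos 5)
Your proposal is correct and follows essentially the same route as the paper: substitute $\lambda=t^{-2}$ into Proposition~\ref{Prop: off-diagonal type estimates for div}, use $r\geq t$ to reduce the product to the Gaussian-type factor $C^{n-k}2^{-(n-k-1)(n-k)}$, control the inner sum, and then split $2^{n+1}B$ into annuli. The only (harmless) difference is in the bookkeeping of the inner sum: the paper applies the proposition with a slightly larger exponent $\nu'\in(\nu,d+2)$ and splits the $k$-sum at $n-k_0$, whereas your reindexing $j=n-k$ and the uniform convergence of $\sum_j (2^{\nu}C)^j2^{-j(j-1)}$ achieves the same bound $2^{-\nu n}$ a bit more directly.
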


\begin{proof}
For the sake of clarity, we will write $\lvert \, \bigdot \, \rvert = \lvert \, \bigdot \, \rvert_{\IC^{d \times d}}$ in this proof. \par
First observe that $\lambda^{\frac{1}{2}} (\lambda + \cA)^{-1} \cP \divergence(F) = t(1+t^2 \cA)^{-1}\cP\divergence(F)$ if $\lambda = t^{-2}$. Next, an application of Proposition~\ref{Prop: off-diagonal type estimates for div} with $\nu^{\prime} \in (\nu , d + 2)$ yields
    \begin{align*}
        \int_{B}|t(1+t^2 \cA)^{-1}\cP\divergence(F)|^2\,\d x &\lesssim\sum_{n=0}^\infty\sum_{k=0}^n  \bigg(\prod_{s=0}^{n-k-1} \frac{C}{\frac{r^2}{t^2}2^{2s}} \bigg) 2^{-\nu^{\prime} k} \int_{2^{n+1}B}|F|^2\,\d x\\
        &\leq \sum_{n=0}^\infty\sum_{k=0}^n C^{n-k} 2^{-(n-k-1)(n-k)} 2^{-\nu^{\prime} k} \int_{2^{n+1}B}|F|^2\,\d x.
    \end{align*}
Without loss of generality, assume that $C \geq 1$. Let $k_0 \in \IN_0$ be such that $C \leq 2^{k_0 - 1 - \nu^{\prime}}$, so that $C 2^{- (n - k - 1)} \leq 2^{- \nu^{\prime}}$ whenever $n - k \geq k_0$. Then
\begin{align*}
 \sum_{k=0}^n C^{n-k} 2^{-(n-k-1)(n-k)} 2^{-\nu^{\prime} k} \leq \sum_{k = 0}^{n - k_0} 2^{- \nu^{\prime}(n - k)} 2^{- \nu^{\prime} k} + \sum_{k = n - k_0 + 1}^n C^{k_0 - 1} 2^{- \nu^{\prime} k} \lesssim 2^{- \nu n}.
\end{align*}
Consequently,
\begin{align*}
 \sum_{n=0}^\infty\sum_{k=0}^n C^{n-k} 2^{-(n-k-1)(n-k)} 2^{-\nu^{\prime} k} \int_{2^{n+1}B}|F|^2\,\d x \lesssim \sum_{n=0}^\infty2^{-\nu n} \int_{2^{n+1}B} |F|^2 \, \d x.
\end{align*}
Splitting $2^{n + 1} B$ into annuli finally yields
\begin{align*}
    \sum_{n=0}^\infty2^{-\nu n}\int_{2^{n+1}B}|F|^2\,\d x = \sum_{n=0}^\infty \sum_{\ell = 0}^n 2^{-\nu n} \int_{C_{\ell}(B)}|F|^2\,\d x &\lesssim\sum_{\ell=0}^\infty2^{-\nu \ell}\int_{C_{\ell}(B)}|F|^2\,\d x. \qedhere
\end{align*}
\end{proof}

\begin{remark}
\label{Rem: Off-diagonals}
\begin{enumerate}
 \item One can replace balls by cubes in the statements of Proposition~\ref{Prop: off-diagonal type estimates for div} and Corollary~\ref{Cor: theta OD} since one can prove the nonlocal Caccioppoli inequality for cubes as well.
 \item If $\supp (F) \subseteq \overline{C_n (B)}$ for some $n \in \IN$, then the estimate in Corollary~\ref{Cor: theta OD} turns into
 \begin{align*}
  \| t(1+t^2 \cA)^{-1}\cP\divergence(F)\|_{\L^2 (B)} &\leq C 2^{-\frac{\nu}{2} n} \|F\|_{\L^2(C_n(B))},
 \end{align*}
 which is an off-diagonal estimate of polynomial order $\frac{\nu}{2}$.
\end{enumerate}
\end{remark}

\section{Reduction to a square function estimate}
\label{Sec: Reduction to a square function estimate}

\noindent As for elliptic operators, the square root property for the generalized Stokes operator is equivalent to the square function estimates
\begin{align}
\label{eq:SFE start solution Kato}
    \int_0^\infty \|t A (1+t^2A)^{-1} u\|_{\L^2}^2 \, \frac{\d t}{t} \lesssim \|\nabla u\|_{\L^2}^2 \qquad (u \in \H^1_\sigma(\IR^d))
\end{align}
and
\begin{align*}
    \int_0^\infty \|t A^* (1+t^2A^*)^{-1} u\|_{\L^2}^2 \, \frac{\d t}{t} \lesssim \|\nabla u\|_{\L^2}^2 \qquad (u \in \H^1_\sigma(\IR^d)),
\end{align*}
where $A^*$ is the Hilbert space adjoint to $A$.  An abstract argument for this was presented in~\cite[Prop.~12.7]{ISEM}. We shortly recapitulate the proof of necessity of this equivalence here. \par
Since $A^*$ is a generalized Stokes operator which coefficients (given by $(\mu^*)_{\alpha \beta}^{i j} \coloneqq \overline{\mu_{\beta \alpha}^{j i}}$) still satisfy Assumption~\ref{Ass: Coefficients} it will suffice to establish~\eqref{eq:SFE start solution Kato} for the class of all generalized Stokes operators subject to Assumption~\ref{Ass: Coefficients}. \par
Consider the holomorphic function $f(z) \coloneqq \sqrt{z} (1 + z)^{-1}$ on $\S_\pi$. As $A$ is m-accretive it has a bounded $\H^{\infty}$-calculus due to von Neumann's inequality~\cite[Thm.~7.1.7]{Haase}. McIntosh's theorem~\cite[Thm.~7.3.1]{Haase} implies that $A$ satisfies quadratic estimates, i.e., that
\begin{align}
\label{Eq: Quadratic estimate}
 \int_0^{\infty} \| f (t^2A) v \|^2_{\L^2} \, \frac{\d t}{t} \simeq \| v \|_{\L^2}^2 \qquad (v \in \L^2_\sigma(\IR^d)).
\end{align}
Given $u \in \dom(A)$, one calculates $f(t^2 A) A^{1/2} u = t A (1 + t^2 A)^{-1} u$, so that~\eqref{eq:SFE start solution Kato} combined with~\eqref{Eq: Quadratic estimate} applied to $v = A^{1/2} u$ turns into
\begin{align*}
 \| A^{1/2} u \|_{\L^2}^2 \lesssim \int_0^\infty \|t A (1+t^2A)^{-1} u\|_{\L^2}^2 \, \frac{\d t}{t} \lesssim \| \nabla u \|_{\L^2}^2 \qquad (u \in \dom(A)).
\end{align*}
Since $\dom(A)$ is dense in $\H^1_{\sigma} (\IR^d)$ with respect to the $\H^1_{\sigma} (\IR^d)$-norm (this follows abstractly by~\cite[Thm.~VI.2.1]{Kato_perturbation}), the closedness of $A^{1/2}$ yields that $\H^1_{\sigma} (\IR^d) \subseteq \dom(A^{1/2})$ and that
\begin{align*}
 \| A^{1/2} u \|_{\L^2} \lesssim \| \nabla u \|_{\L^2} \qquad (u \in \H^1_{\sigma} (\IR^d)).
\end{align*}
Replacing $A$ by $A^*$ we deduce the same estimate for $A^*$. Thus, Assumption~\ref{Ass: Coefficients} yields for $u \in \dom(A)$  
\begin{align*}
 \mu_{\bullet} \| \nabla u \|_{\L^2}^2 \leq \Re \fa(u , u) = \Re \langle A u , u \rangle_{\L^2} \leq \| A^{1/2} u \|_{\L^2} \| (A^*)^{1/2} u \|_{\L^2} \lesssim \| A^{1/2} u \|_{\L^2} \| \nabla u \|_{\L^2}.
\end{align*}
Since $\dom(A)$ is a core for $A^{1/2}$ (see~\cite[Thm.~V.3.35]{Kato_perturbation}), we conclude that $\dom(A^{1/2}) \subseteq \H^1_{\sigma} (\IR^d)$ and that
\begin{align*}
 \| \nabla u \|_{\L^2} \lesssim \| A^{1/2} u \|_{\L^2} \qquad (u \in \dom(A^{1/2}))
\end{align*}
by density. Thus, Theorem~\ref{Thm: Kato} is proved once the square function estimate~\eqref{eq:SFE start solution Kato} can be established for all generalized Stokes operators $A$ which coefficients satisfy Assumption~\ref{Ass: Coefficients}.

\section{Principle part approximation}
\label{Sec: Principle part approximation}

\noindent Since the square function estimate~\eqref{eq:SFE start solution Kato} requires a control by $F\coloneqq \nabla u$ we rewrite the expression on the left-hand side in terms of $\nabla u$ by using Lemma~\ref{Lem: Product structure}
\begin{align}
\label{eq:introducing theta}
    tA (1+t^2A)^{-1} u
    = t (1+t^2\cA)^{-1} \cA u = - t (1 + t^2 \cA)^{-1} \cP \divergence (\mu \nabla u)
    \eqqcolon \Theta_t (\nabla u).
\end{align}
\begin{definition}
\label{defi:theta}\index{$\Theta_t$ (proof the Kato conjecture)}
For $t>0$ define the bounded operator $\Theta_t$ by
\begin{align*}
     \Theta_t \colon \L^2(\IR^d;\IC^{d \times d}) \to \L^2(\IR^d;\IC^d), \quad  \Theta_t F \coloneqq  -t(1+t^2 \cA)^{-1}\cP\divergence (\mu F).
\end{align*}
\end{definition}
Recall that $- t (1 + t^2 \cA)^{-1} \cP \divergence$ defines a bounded family in $\cL(\L^2(\IR^d ; \IC^{d \times d}),\L^2(\IR^d;\IC^d))$ and further satisfies the nonlocal off-diagonal bounds from Proposition~\ref{Prop: off-diagonal type estimates for div} and Corollary~\ref{Cor: theta OD}. Since $\Theta_t$ arises from that family by multiplication by the bounded, matrix-valued function $\mu$, these estimates are transferred to $\Theta_t$ and will be used frequently in the following. A first application is the following definition of $\Theta_t$ on bounded matrix-valued functions.
\begin{proposition}
\label{prop: operator on Linfty}
    Let $b\in \L^\infty(\IR^d;\IC^{d\times d})$. Then for every $t>0$ and ball $B=B(x,r)\subseteq\IR^d$ the limit
    \begin{align*}
        \Theta_t b \coloneqq \lim_{j\to \infty}\Theta_t(\ind_{2^jB}b)
    \end{align*}
    exists in $\L_\loc^2(\IR^d;\IC^d)$ and is independent of $x$ and $r$. Additionally, one obtains the same limit if $B$ is replaced by a cube with sides parallel to the coordinate axes.
\end{proposition}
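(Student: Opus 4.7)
My plan is to show that for every ball $B_0 \subseteq \IR^d$ the sequence $(\Theta_t(\ind_{2^j B} b))_{j}$ is Cauchy in $\L^2(B_0)$, by feeding the polynomial off-diagonal bound of Corollary~\ref{Cor: theta OD} with a decay exponent $\nu \in (d, d+2)$. The freedom to take $\nu > d$ is the decisive quantitative input: it is what makes the tail summable against the $2^{nd}$ volume growth of the dyadic annuli.

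Concretely, I fix such a $B_0$ and enclose it in a ball $B' = B(y, R)$ with $R \geq t$. For $k > j$, I write
\begin{align*}
\Theta_t(\ind_{2^k B} b) - \Theta_t(\ind_{2^j B} b) = \Theta_t(F_{j,k}), \qquad F_{j,k} \coloneqq \ind_{2^k B \setminus 2^j B}\, b,
\end{align*}
and observe that $\supp F_{j,k} \subseteq \IR^d \setminus 2^j B$. For $j$ large enough that $2^j B \supseteq 2^{N+1} B'$, the annuli $C_0(B'), \dots, C_N(B')$ all lie inside $2^j B$ and are therefore disjoint from $\supp F_{j,k}$; the maximal admissible $N = N(j)$ tends to infinity with $j$. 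Combining this with $|\mu F_{j,k}| \leq \mu^{\bullet} \|b\|_{\L^\infty}$ and $|C_n(B')| \lesssim 2^{nd} R^d$, Corollary~\ref{Cor: theta OD} yields
\begin{align*}
\|\Theta_t F_{j,k}\|_{\L^2(B_0)}^2 \leq \|\Theta_t F_{j,k}\|_{\L^2(B')}^2 \lesssim (\mu^{\bullet} \|b\|_{\L^\infty})^2 R^d \sum_{n > N(j)} 2^{-(\nu - d)n},
\end{align*}
which vanishes as $j \to \infty$. Hence the limit $\Theta_t b$ exists in $\L^2_\loc(\IR^d; \IC^d)$.

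For independence of the center $x$ and the radius $r$, I would run the same argument on the difference $\Theta_t((\ind_{2^j B_1} - \ind_{2^j B_2}) b)$: on $2^j B_1 \cap 2^j B_2$ the two indicators cancel, so for $j$ large the integrand is supported outside $\bigcup_{n \leq N(j)} C_n(B')$, and the identical tail bound forces the difference to zero in $\L^2(B_0)$. For cubes with sides parallel to the axes, Remark~\ref{Rem: Off-diagonals}(1) furnishes the cube-version of Corollary~\ref{Cor: theta OD}, and the Cauchy argument above transfers verbatim; to identify the cube-limit with the ball-limit, I would compare $\Theta_t((\ind_{2^j B} - \ind_{2^j Q}) b)$ by the same cancellation/tail recipe, using that for $j$ large both $2^j B$ and $2^j Q$ absorb any prescribed dyadic annulus around $B'$.

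The only genuine obstacle is the quantitative tail: the geometric series $\sum_n 2^{-\nu n} 2^{nd}$ diverges for $\nu \leq d$, so a decay exponent of order exactly $d$ in the off-diagonal bound would be insufficient. This is precisely why we insisted on the improved polynomial decay $d + 2$ in Corollary~\ref{Cor: theta OD}; the window $(d, d+2)$ supplies exactly the margin that the above estimate needs.
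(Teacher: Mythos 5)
Your proof is correct and follows essentially the same strategy as the paper: the difference of truncations is supported far from the reference ball, and the admissibility of a decay exponent $\nu>d$ makes the off-diagonal tail summable against the $2^{nd}$ volume growth of the annuli, yielding the Cauchy property. The only (harmless) difference is that you route the estimate through Corollary~\ref{Cor: theta OD} (taking care to enclose the compact set in a ball of radius $R\geq t$), whereas the paper applies Proposition~\ref{Prop: off-diagonal type estimates for div} directly and collapses the double sum by hand; your treatment of the independence of the limit and of the cube case matches the pattern the paper delegates to the literature.
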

\begin{proof}
    Let $K\subseteq \IR^d$ be an arbitrary compact set. Then there exists $j_0\in\IN$ such that $K\subseteq 2^{j_0}B$. Assume from now on that $\ell>j>j_0$. Proposition~\ref{Prop: off-diagonal type estimates for div} implies
    \begin{align*}
        &\| \Theta_t(\ind_{2^{\ell}B}b) - \Theta_t(\ind_{2^jB}b)\|^2_{\L^2(K)} \leq \|\Theta_t(\ind_{2^{\ell}B\setminus 2^jB}b)\|^2_{\L^2(2^{j_0}B)}\\
        &\qquad \lesssim \sum_{n=0}^\infty\sum_{k=0}^n  \bigg(\prod_{s=0}^{n-k-1}\frac{C}{\frac{2^{2(s+j_0)}r^2}{t^2}}\bigg)2^{-\nu k} \cdot \|\ind_{2^{\ell}B\setminus 2^jB}b\|^2_{\L^2(2^{j_0+n+1}B)}\\
        &\qquad \leq \sum_{n=j-j_0}^\infty\sum_{k=0}^n  \bigg(\prod_{s=0}^{n-k-1}\frac{C}{\frac{2^{2(s+j_0)}r^2}{t^2}}\bigg)2^{-\nu k} \cdot \|b\|^2_{\L^2(2^{j_0+n+1}B)}\\
        &\qquad \leq \sum_{n=j-j_0}^\infty\sum_{k=0}^n  \bigg(\prod_{s=0}^{n-k-1}\frac{C}{\frac{2^{2(s+j_0)}r^2}{t^2}}\bigg)2^{-\nu k} \cdot |2^{j_0+n+1}B|\, \|b\|^2_{\L^\infty} \\
        &\qquad \leq \sum_{n=j-j_0}^\infty\sum_{k=0}^n \bigg( 
        \frac{C t^2}{2^{2 j_0} r^2} \bigg)^{n - k} 2^{- (n - k - 1) (n - k)} 2^{-\nu k} \cdot |2^{j_0+n+1}B|\, \|b\|^2_{\L^\infty}.
    \end{align*}
Choosing $j_0$ large enough, we can assume that $C t^2 / (2^{2 j_0} r^2) \leq 1$. Moreover, maximizing the coefficient $2^{- (n - k - 1) (n - k)} 2^{-\nu k}$ with respect to $k$ shows that
    \begin{align*}
        \| \Theta_t(\ind_{2^{\ell}B}b) - \Theta_t(\ind_{2^jB}b)\|^2_{\L^2(K)} &\leq C(\nu)\sum_{n=j-j_0}^\infty n 2^{-\nu n} \cdot |2^{j_0+n+1}B|\,\|b\|^2_{\L^\infty} \\
        &\leq C(\nu,d,j_0,r)\sum_{n=j-j_0}^\infty n 2^{(d-\nu) n}\|b\|^2_{\L^\infty},
    \end{align*}
    which converges to zero for $\ell,j\to \infty$ because we can choose $\nu>d$. Hence, $(\Theta_t(\ind_{2^jB}b))_{j\in\IN}$ is a Cauchy sequence in $\L^2(K)$ for every $K$ which implies the claim. $B$-independence of the limit as well as that $B$ could be replaced by a cube follows by an analogous pattern, see, e.\@g.\@,~\cite[Prop.~5.1]{Auscher_Egert}. Recall that (as was mentioned in Remark~\ref{Rem: Off-diagonals}) the off-diagonal estimates hold for cubes as well.
\end{proof}
\begin{definition}[Principal part of $\Theta_t$]
\label{defi:gamma}
Identify the matrices $e_{jk} = (\delta_{jm}\delta_{kn})_{m,n=1}^{d}\in \IC^{d\times d}$, where $\delta_{mn}$ denotes Kronecker's delta, with the respective constant functions on $\IR^d$. For $t>0$ we define the \emph{principle part of $\Theta_t$} as
\begin{align*}
    (\gamma_t)_{jk} \coloneqq  \Theta_t (e_{jk})\in \L_\loc^2(\IR^d;\IC^{d})
\end{align*}
for all $j,k=1, \dots , d$. In many situations, we will regard $\gamma_t (x)$, for $x \in \IR^d$ and $t > 0$ fixed, as a linear operator in $\cL(\IC^{d \times d} , \IC^d)$ defined via
\begin{align*}
    \gamma_t (x) \cdot G 
    \coloneqq \sum_{j,k=1}^d(\gamma_t (x))_{jk} G_{jk} \qquad (G \in \IC^{d \times d}).
\end{align*}
The norm of $\gamma_t (x) \cdot$ will be denoted by $\lvert \gamma_t (x) \cdot \rvert_{\cL(\IC^{d \times d} , \IC^d)}$.
\end{definition}
The idea of the principle part is to approximate the operator $\Theta_t$ in an averaged sense in order to reduce the square function estimate~\eqref{eq:SFE start solution Kato} to a certain Carleson measure estimate. For this purpose, we next introduce the dyadic averaging operator as in~\cite[Sec.~9.3]{ISEM}.
\begin{definition}
\begin{enumerate}
    \item For each $j\in\IZ$ we define \emph{dyadic cubes of generation $2^j$} as elements of the set
    \begin{align*}
        \cube_{2^j}\coloneqq \{ 2^jx + [0,2^j)^d: x\in\IZ^d\}
    \end{align*}
    and denote by $\cube \coloneqq \cup_{j\in\IZ} \cube_{2^j}$ the collection of all dyadic cubes. Moreover, for $t>0$ set $\cube_t \coloneqq \cube_{2^j}$ for the unique integer with $2^{j-1}<t\leq 2^j$.

    \item For $u\in\Lloc^1(\IR^d)$ and $t>0$ define the \emph{dyadic averaging operator (at scale $t$)} as
    \begin{align*}
        (\DyadicMax_tu) (x) \coloneqq \fint_Q u(y)\,\d y
    \end{align*}
    where $Q\in\cube_t$ is the unique dyadic cube which contains $x\in\IR^d$.
\end{enumerate}
\end{definition}

When a scalar operator, such as the averaging operator, is applied to a vector- or matrix-valued function, we agree from now on to apply the operator to each component separately. 

In the following proposition, the principle part will be applied to an average of $\nabla u$. Observe that the trace of the matrix $\nabla u$ satisfies $\tr(\nabla u) = 0$ because $u$ is divergence-free. To capture this fact, we introduce the following projection onto trace-free matrices on $\IC^{d \times d}$
\begin{align*}
 \P : \IC^{d \times d} \to \IC^{d \times d}, \quad \P G := G - \frac{\tr(G)}{d} \Id_{d \times d}
\end{align*} 
where $\Id_{d \times d}\in \IC^{d \times d}$ is the identity matrix. The map $\gamma_t (x) \cdot \P \in \Lop(\IC^{d \times d} , \IC^d)$ will be understood as $\gamma_t (x) \cdot \P G \coloneqq \gamma_t (x) \cdot (\P G)$ for $G \in \IC^{d \times d}$.

\begin{proposition}[Reduction to a Carleson measure estimate]
\label{prop:reduction to Carleson}
    If
    \begin{align}
    \label{eq: principle part approximation}
        \int_0^\infty \|(\Theta_t - \gamma_t \cdot \DyadicMax_t)(\nabla u)\|_{\L^2}^2 \, \frac{\d t}{t} \lesssim \|\nabla u\|_{\L^2}^2 \qquad (u \in \H^1_\sigma(\IR^d)),
    \end{align}
    and if $|\gamma_t
    (x) \cdot \P|_{\cL(\IC^{d \times d} , \IC^d)}^2 \, \frac{\d x \, \d t}{t}$ is a Carleson measure, then~\eqref{eq:SFE start solution Kato} is valid for all $u \in \H^1_{\sigma} (\IR^d)$.
\end{proposition}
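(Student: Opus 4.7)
The plan is to exploit the decomposition
\begin{align*}
\Theta_t(\nabla u) = \bigl(\Theta_t - \gamma_t \cdot \DyadicMax_t\bigr)(\nabla u) + \gamma_t \cdot \DyadicMax_t(\nabla u),
\end{align*}
and recall from \eqref{eq:introducing theta} that $tA(1+t^2A)^{-1}u = \Theta_t(\nabla u)$. The square function contribution of the first summand is absorbed directly by hypothesis~\eqref{eq: principle part approximation}, so the whole argument reduces to estimating the principal part piece $\gamma_t \cdot \DyadicMax_t(\nabla u)$.

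The key observation for the second term is that divergence-freeness of $u$ forces $\tr(\nabla u) = \sum_{i=1}^d \partial_i u_i = 0$ pointwise, hence $\P(\nabla u) = \nabla u$. Since the dyadic averaging operator $\DyadicMax_t$ is linear and acts component-wise, it commutes with $\P$, so
\begin{align*}
\gamma_t(x) \cdot \DyadicMax_t(\nabla u)(x) = \gamma_t(x) \cdot \P\bigl(\DyadicMax_t(\nabla u)(x)\bigr),
\end{align*}
and by definition of the operator norm,
\begin{align*}
\bigl|\gamma_t(x) \cdot \DyadicMax_t(\nabla u)(x)\bigr| \leq \bigl|\gamma_t(x) \cdot \P\bigr|_{\cL(\IC^{d\times d},\IC^d)} \bigl|\DyadicMax_t(\nabla u)(x)\bigr|.
\end{align*}
This is exactly the point where the $\P$ that appears in the Carleson measure hypothesis enters decisively: without the trace-free reduction one would need a Carleson estimate for $|\gamma_t|^2 \,\tfrac{\d x\,\d t}{t}$, which is false in general.

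With the pointwise bound in hand, the hypothesis that $|\gamma_t(x) \cdot \P|^2 \tfrac{\d x\,\d t}{t}$ is a Carleson measure together with Carleson's embedding theorem (in the dyadic variant appropriate to $\DyadicMax_t$; see e.g.~\cite[Sec.~9.3]{ISEM}) yields
\begin{align*}
\int_0^\infty \bigl\|\gamma_t \cdot \DyadicMax_t(\nabla u)\bigr\|_{\L^2}^2 \, \frac{\d t}{t} \lesssim \int_{\IR^d} \Bigl(\sup_{t>0} \bigl|\DyadicMax_t(\nabla u)(x)\bigr|\Bigr)^2 \, \d x.
\end{align*}
The dyadic maximal function is bounded on $\L^2(\IR^d;\IC^{d\times d})$ by Doob's inequality, so the right-hand side is dominated by $\|\nabla u\|_{\L^2}^2$. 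Adding the contribution controlled by~\eqref{eq: principle part approximation} gives
\begin{align*}
\int_0^\infty \|tA(1+t^2A)^{-1}u\|_{\L^2}^2 \, \frac{\d t}{t} = \int_0^\infty \|\Theta_t(\nabla u)\|_{\L^2}^2 \, \frac{\d t}{t} \lesssim \|\nabla u\|_{\L^2}^2,
\end{align*}
which is precisely \eqref{eq:SFE start solution Kato}.

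There is no serious analytic obstacle in this reduction — the argument is purely algebraic plus one application of Carleson embedding and one application of the maximal inequality. The only subtle point is the algebraic insertion of $\P$, which requires the divergence-free constraint and the fact that $\DyadicMax_t$ commutes with the linear projection $\P$; all the genuinely difficult work is deferred to proving \eqref{eq: principle part approximation} and the Carleson estimate for $|\gamma_t \cdot \P|^2\,\tfrac{\d x\,\d t}{t}$ in the subsequent sections.
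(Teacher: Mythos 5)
Your proof is correct and follows essentially the same route as the paper: the same splitting into $(\Theta_t - \gamma_t\cdot\DyadicMax_t)(\nabla u)$ plus $\gamma_t\cdot\DyadicMax_t(\nabla u)$, the same insertion of $\P$ via $\tr(\nabla u)=0$ and the commutation of $\DyadicMax_t$ with the constant projection $\P$, and the same conclusion via Carleson's embedding lemma combined with the $\L^2$-boundedness of the dyadic maximal function. No gaps.
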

\begin{proof}
    Observe that $\gamma_t (x) \cdot (\DyadicMax_t \nabla u) (x) = \gamma_t (x) \cdot \P (\DyadicMax_t \nabla u) (x)$ since $\divergence(u) = 0$. Using~\eqref{eq: principle part approximation} we estimate
    \begin{align*}
        \int_0^\infty \|\Theta_t(\nabla u)\|_{\L^2}^2 \, \frac{\d t}{t} &\lesssim \int_0^\infty \|(\Theta_t - \gamma_t \cdot \DyadicMax_t)(\nabla u)\|_{\L^2}^2 \, \frac{\d t}{t} \\
        &\qquad + \int_0^\infty \int_{\IR^d}  |(\DyadicMax_t\nabla u)(x)|_{\IC^{d \times d}}^2 |\gamma_t(x) \cdot \P|_{\cL(\IC^{d \times d} , \IC^d)}^2 \, \frac{\d x \, \d t}{t}\\
        &\lesssim \|\nabla u\|_{\L^2}^2 + \int_0^\infty \int_{\IR^d} |(\DyadicMax_t\nabla u)(x)|_{\IC^{d \times d}}^2 |\gamma_t(x) \cdot \P|_{\cL(\IC^{d \times d} , \IC^d)}^2 \, \frac{\d x \, \d t}{t}.
    \end{align*}
    Under the assumption that $|\gamma_t(x) \cdot \P|_{\cL(\IC^{d \times d} , \IC^d)}^2 \, \frac{\d x \, \d t}{t}$ is a Carleson measure, the second term can be estimated by the $\L^2$-norm of $\nabla u$ due to Carleson's lemma, see~\cite[Thm.~9.19]{ISEM}. The relation~\eqref{eq:introducing theta} now implies the claim.
\end{proof}
\noindent The rest of this section is devoted to establish~\eqref{eq: principle part approximation}. We follow the approach of~\cite{ISEM} by first showing uniform bounds of $\gamma_t \cdot \DyadicMax_t$ and its approximation $\Theta_t - \gamma_t \cdot \DyadicMax_t$ in $\L^2(\IR^d;\IC^d)$. The following representation of  $\gamma_t \cdot \DyadicMax_tF$ for $F\in \L^2(\IR^d;\IC^{d\times d})$ will play a useful role in their proofs. For a fixed dyadic cube $Q \in \cube_t$ we have for $x \in Q$
\begin{align*}
     \gamma_t (x) \cdot \DyadicMax_t F (x)
    = \sum_{j,k=1}^d [\Theta_t(e_{jk})](x) \, (F_{jk})_Q
    = \bigg[\Theta_t \biggl(\sum_{j,k=1}^d e_{jk} \, (F_{jk})_Q \biggr)\bigg]  (x)
    = [\Theta_t \bigl(\ind_{\IR^d} (F)_Q \bigr)](x).
\end{align*}
Thus, by virtue of Proposition~\ref{prop: operator on Linfty}, we obtain
\begin{align}
\label{eq:formula gamma_t A_t on a cube}
\begin{split}
   \gamma_t \cdot \DyadicMax_t F = \lim_{j \to \infty} \Theta_t \bigl(\ind_{2^jQ}(F)_Q \bigr) = \lim_{j \to \infty} \sum_{\ell=0}^{j-1}\Theta_t \bigl(\ind_{C_\ell(Q)}(F)_Q \bigr) =  \sum_{\ell=0}^{\infty}\Theta_t \bigl(\ind_{C_\ell(Q)}(F)_Q \bigr).
\end{split}
\end{align}
\begin{lemma}
\label{lem:uniform bound for principal part averages}
For all $t>0$ we have
\begin{align*}
    \|\gamma_t \cdot \DyadicMax_t F\|_{\L^2} \lesssim \|F\|_{\L^2} \qquad (F \in \L^2(\IR^d;\IC^{d\times d})).
\end{align*}
\end{lemma}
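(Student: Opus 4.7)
I would exploit the pointwise representation~\eqref{eq:formula gamma_t A_t on a cube} on each dyadic cube $Q \in \cube_t$ (for which $\ell(Q) \simeq t$) together with the polynomial off-diagonal decay of $\Theta_t$ from Remark~\ref{Rem: Off-diagonals}(ii) (adapted to cubes by Remark~\ref{Rem: Off-diagonals}(i)). The strategy is to estimate the $\L^2(Q)$-norm of each annular piece $\Theta_t(\ind_{C_\ell(Q)}(F)_Q)$, to sum in $\ell$ thanks to a dimensional margin in the decay rate, and to sum over the disjoint partition $\cube_t$ of $\IR^d$.

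\textbf{Main computation.} Fix $\nu \in (d, d+2)$, which is admissible. For each $Q \in \cube_t$ and $\ell \in \IN_0$, the function $\ind_{C_\ell(Q)}(F)_Q$ is supported in $\overline{C_\ell(Q)}$, so Remark~\ref{Rem: Off-diagonals}(ii) applied to $\Theta_t = -t(1+t^2\cA)^{-1}\cP\divergence(\mu\,\cdot\,)$ (the decay transfers through $\mu$ thanks to $\|\mu\|_{\L^\infty} \leq \mu^\bullet$) gives
\begin{align*}
 \|\Theta_t(\ind_{C_\ell(Q)}(F)_Q)\|_{\L^2(Q)} \lesssim 2^{-\nu \ell/2} \, |C_\ell(Q)|^{1/2} \, |(F)_Q|.
\end{align*}
Cauchy--Schwarz yields $|(F)_Q| \leq |Q|^{-1/2} \|F\|_{\L^2(Q)}$, and the volume bound $|C_\ell(Q)| \lesssim 2^{\ell d} |Q|$ then produces
\begin{align*}
 \|\Theta_t(\ind_{C_\ell(Q)}(F)_Q)\|_{\L^2(Q)} \lesssim 2^{-(\nu - d)\ell/2} \|F\|_{\L^2(Q)}.
\end{align*}
Summing this geometric series in $\ell$, which converges since $\nu > d$, and invoking~\eqref{eq:formula gamma_t A_t on a cube} gives $\|\gamma_t \cdot \DyadicMax_t F\|_{\L^2(Q)} \lesssim \|F\|_{\L^2(Q)}$. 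Squaring and summing over the pairwise disjoint cubes in $\cube_t$, which together tile $\IR^d$, closes the argument.

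\textbf{Expected obstacle.} The delicate point is the interplay between the polynomial decay provided by Proposition~\ref{Prop: off-diagonal type estimates for div} and the volume growth $2^{\ell d}$ of the annuli $C_\ell(Q)$: one needs a decay rate strictly larger than $d/2$, and the cap $\nu < d+2$ inherited from the nonlocal Caccioppoli inequality leaves only the two-unit window $\nu \in (d, d+2)$. This window, obtained only after the iterative improvement of the decay order carried out in Section~\ref{Sec: L2-theory of the generalized Stokes resolvent}, is exactly what makes the summation close; any $\nu \leq d$ would destroy the $\ell$-series. The structural inputs (divergence-freeness, the projection $\cP$) enter only implicitly through the off-diagonal bounds; no cancellation specific to $F = \nabla u$ is needed here, which is why the lemma is stated for arbitrary matrix-valued $F$.
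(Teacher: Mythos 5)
Your proposal is correct and follows essentially the same route as the paper: the representation \eqref{eq:formula gamma_t A_t on a cube} on each $Q \in \cube_t$, the off-diagonal decay of order $\nu/2$ applied to each annular piece, H\"older/Cauchy--Schwarz for the average giving the $2^{d\ell/2}$ volume factor, and summation of the resulting geometric series using $\nu > d$ before summing over the tiling $\cube_t$. The only cosmetic difference is that you cite Remark~\ref{Rem: Off-diagonals}(2) (stated for $n \in \IN$) rather than Corollary~\ref{Cor: theta OD} directly, which the paper uses and which also covers the $\ell = 0$ term.
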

\begin{proof}
It is enough to show
\begin{align}
\label{eq1:uniform bounds for principal part averages}
    \|\gamma_t \cdot \DyadicMax_t F \|^2_{\L^2(Q)} \lesssim \|F\|^2_{\L^2(Q)}
\end{align}
for all cubes $Q \in \cube_t$. As for~\eqref{eq1:uniform bounds for principal part averages}, we use~\eqref{eq:formula gamma_t A_t on a cube} and apply the off-diagonal type estimates for $\Theta_t$ from Corollary~\ref{Cor: theta OD} with $\ell(Q)\geq t$ in order to estimate
\begin{align}
\label{eq2:uniform bound for principal part averages}
\begin{split}
    \|\gamma_t \cdot \DyadicMax_t F \|_{\L^2(Q)}
    &\leq  \sum_{\ell=0}^{\infty}\bigl\|\Theta_t \bigl(\ind_{C_\ell(Q)}(F)_Q \bigr)\bigr\|_{\L^2(Q)}\\
    &\leq C\sum_{\ell=0}^{\infty} \bigg( \sum_{n=0}^\infty  2^{-\nu n}\|\ind_{C_\ell(Q)}(F)_Q\|^2_{\L^2(C_n(Q))} \bigg)^{\frac{1}{2}} \\
    &= C\sum_{\ell=0}^{\infty}  2^{-\frac{\nu}{2} \ell}\|(F)_Q\|_{\L^2(C_\ell(Q))}.
\end{split}
\end{align}
By Hölder's inequality, it follows
\begin{align*}
    \|(F)_Q\|_{\L^2(C_\ell(Q))} =  \frac{|C_\ell(Q)|^\frac{1}{2}}{|Q|}\bigg|\int_Q F\,\d x\bigg|_{\IC^{d \times d}}
    \leq   \frac{|C_\ell(Q)|^\frac{1}{2}}{|Q|^\frac{1}{2}}\| F\|_{\L^2(Q)}
    =   2^{\frac{d}{2}\ell}\| F\|_{\L^2(Q)}.
\end{align*}
Hence, we get
\begin{align*}
     \|\gamma_t \cdot \DyadicMax_t F \|_{\L^2(Q)} &\leq  C\sum_{\ell=0}^{\infty}  2^{\frac{d-\nu}{2} \ell} \|F\|_{\L^2(Q)} \leq  C \|F\|_{\L^2(Q)}
\end{align*}
where we used in the last inequality that the choice $\nu>d$ is admissible. This proves~\eqref{eq1:uniform bounds for principal part averages}. 
\end{proof}

The following lemma provides already an estimate in the direction of~\eqref{eq: principle part approximation} but with a fractional Sobolev norm instead of an $\L^2$-norm on the right-hand side. Observe, that the information that $F$ is a gradient is not (yet) used in this estimate. Note that such a control by a fractional Sobolev norm was also used in the resolution of the parabolic Kato problem, see~\cite[Sec.~7.3]{Auscher_Egert_Nystroem}.

\begin{lemma}
\label{lem:uniform bound principal part approximation}
For all $t>0$ and $\alpha \in (0 , 1)$ we have
\begin{align*}
    \|(\Theta_t - \gamma_t \cdot \DyadicMax_t)F\|_{\L^2} \lesssim t^\alpha \|F\|_{\dot\H^\alpha} \qquad (F \in \H^\alpha(\IR^d;\IC^{d\times d})).
\end{align*}
\end{lemma}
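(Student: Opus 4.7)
The plan is to adapt the classical localization-plus-off-diagonals strategy of the elliptic Kato proof (cf.\ the analogous statement in~\cite{ISEM}) to the polynomial off-diagonal framework from Section~\ref{Sec: L2-theory of the generalized Stokes resolvent}. Fix $F\in\H^{\alpha}(\IR^d;\IC^{d\times d})$ and a dyadic cube $Q\in\cube_t$. Since $F\in\L^2$ and $\Theta_t$ is $\L^2$-bounded, we may decompose $F=\sum_{\ell\geq 0}\ind_{C_\ell(Q)}F$ convergently in $\L^2$, and the formula~\eqref{eq:formula gamma_t A_t on a cube} combined with Proposition~\ref{prop: operator on Linfty} identifies $\gamma_t\cdot\DyadicMax_t F=\sum_{\ell\geq 0}\Theta_t\bigl(\ind_{C_\ell(Q)}(F)_Q\bigr)$ on $Q$. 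Subtracting, the restriction of $(\Theta_t-\gamma_t\cdot\DyadicMax_t)F$ to $Q$ equals
\[
\sum_{\ell\geq 0}\Theta_t\bigl(\ind_{C_\ell(Q)}(F-(F)_Q)\bigr).
\]

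To each annular summand I would apply the cube version of the off-diagonal estimate from Remark~\ref{Rem: Off-diagonals}(2) with some $\nu\in(d+2\alpha,d+2)$, a range that is nonempty precisely because $\alpha<1$. Together with Jensen's inequality and the elementary bound $|x-y|\lesssim 2^\ell t$ valid for $x\in C_\ell(Q)$, $y\in Q$, this produces
\[
\|\Theta_t\bigl(\ind_{C_\ell(Q)}(F-(F)_Q)\bigr)\|_{\L^2(Q)}^2 \lesssim 2^{-\nu\ell}\|F-(F)_Q\|_{\L^2(C_\ell(Q))}^2 \lesssim 2^{\ell(d+2\alpha-\nu)}t^{2\alpha}I_\ell(Q),
\]
where
\[
I_\ell(Q)\coloneqq \int_{Q\times C_\ell(Q)}\frac{|F(x)-F(y)|^2}{|x-y|^{d+2\alpha}}\,\d x\,\d y.
\]
The intermediate Poincar\'e-type step is just Jensen applied to the average, writing $F(x)-(F)_Q=\fint_Q(F(x)-F(y))\,\d y$ and inserting the multiplier $|x-y|^{d+2\alpha}\lesssim (2^\ell t)^{d+2\alpha}$.

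It remains to sum in $\ell$ and then over $Q\in\cube_t$. I would apply a weighted Cauchy--Schwarz with a small $\kappa>0$ chosen so that $d+2\alpha-\nu+\kappa<0$, producing
\[
\|(\Theta_t-\gamma_t\cdot\DyadicMax_t)F\|_{\L^2(Q)}^2 \lesssim t^{2\alpha}\sum_{\ell\geq 0}2^{\ell(d+2\alpha-\nu+\kappa)}I_\ell(Q).
\]
The final summation over $Q$ is where the precise form of $I_\ell(Q)$ matters: since the cubes in $\cube_t$ are pairwise disjoint, for each $x\in\IR^d$ there is a \emph{unique} $Q(x)\ni x$, and hence
\[
\sum_{Q\in\cube_t}I_\ell(Q) = \int_{\IR^d}\int_{C_\ell(Q(x))}\frac{|F(x)-F(y)|^2}{|x-y|^{d+2\alpha}}\,\d y\,\d x \leq \|F\|_{\dot\H^\alpha}^2,
\]
with no factor in $\ell$. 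Summing the remaining geometric series in $\ell$ yields the claim. The main technical obstacle is exactly this parameter balance: the off-diagonal order $\nu$ is capped at $d+2$ by Corollary~\ref{Cor: theta OD}, while the volume of $C_\ell(Q)$ and the pointwise distance bound force a growth $2^{\ell(d+2\alpha)}$; a cruder treatment (for example, estimating by the full $\dot\H^\alpha$-seminorm on $2^{\ell+1}Q$ and then summing over $Q$) would introduce an unavoidable additional $2^{\ell d}$ factor from overlaps and would require $\nu>2d+2\alpha$, which is incompatible with $\nu<d+2$ in dimensions $d\geq 2$. Keeping the restricted bilinear quantity $I_\ell(Q)$ throughout is therefore essential.
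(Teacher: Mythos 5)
Your argument is correct, and it deviates from the paper's proof at the one genuinely technical step, namely how the oscillation $\|F-(F)_Q\|_{\L^2}$ over the dilated cubes is converted into the fractional seminorm. The paper first enlarges $C_\ell(Q)$ to $2^{\ell+1}Q$, then inserts a telescopic sum $(F)_Q-(F)_{2Q}-\dots-(F)_{2^{\ell+1}Q}$ so that the fractional Sobolev--Poincar\'e inequality of~\cite{Fractional_poincare} can be applied on each intermediate cube $2^jQ$; the price is the overlap count $\sum_{Q\in\smallcube_t}\|F\|^2_{\dot\H^\alpha(2^jQ)}\lesssim 2^{dj}\|F\|^2_{\dot\H^\alpha}$, which the telescoping has been set up to absorb. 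You instead keep the average over the \emph{original} cube $Q$, estimate $|F(x)-(F)_Q|^2\le\fint_Q|F(x)-F(y)|^2\,\d y$ by Jensen, insert the kernel via $|x-y|\lesssim 2^\ell t$, and carry the restricted bilinear quantity $I_\ell(Q)$ all the way to the global summation, where the disjointness of $\cube_t$ kills any overlap factor. Both routes end up needing exactly $\nu>d+2\alpha$ (compatible with $\nu<d+2$ because $\alpha<1$), so neither gains in the parameter range; what your version buys is the elimination of the telescoping bookkeeping and of the external Poincar\'e lemma, at the cost of having to track the non-symmetric domain $Q\times C_\ell(Q)$ rather than a single seminorm. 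Two small points to make explicit when writing this up: the identity $\gamma_t\cdot\DyadicMax_tF=\sum_\ell\Theta_t(\ind_{C_\ell(Q)}(F)_Q)$ on $Q$ is~\eqref{eq:formula gamma_t A_t on a cube} and requires $\ell(Q)\ge t$ for the cube version of Corollary~\ref{Cor: theta OD}, which holds since $Q\in\cube_t$; and for $\ell=0$ the annulus $C_0(Q)=2Q$ meets $Q$, so the pointwise insertion of $|x-y|^{d+2\alpha}\lesssim(2^\ell t)^{d+2\alpha}$ is used only in the harmless direction and on the complement of the diagonal.
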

\begin{proof}
Similar to the proof in~\cite[Lem.~13.6]{ISEM} we argue in two steps: First we work on a fixed dyadic cube and then sum over a partition of $\IR^d$. So fix a cube $Q \in \cube_t$. Since $\Theta_t$ is bounded from $\L^2(\IR^d;\IC^{d\times d})$ to $\L^2_{\sigma} (\IR^d)$, we have
\begin{align*}
    \Theta_t F =  \sum_{\ell=0}^{\infty}\Theta_t \bigl(\ind_{C_\ell(Q)}F\bigr) \qquad (F \in \L^2 (\IR^d ; \IC^{d \times d}))
\end{align*}
with convergence in $\L^2 (\IR^d ; \IC^d)$. Subtracting~\eqref{eq:formula gamma_t A_t on a cube} on $Q$ and applying the off-diagonal estimates from Corollary~\ref{Cor: theta OD}, we find
\begin{align*}
    \|(\Theta_t - \gamma_t \cdot \DyadicMax_t)F\|_{\L^2(Q)} 
    & \leq  \sum_{\ell=0}^{\infty} \bigl\|\Theta_t \bigl(\ind_{C_\ell(Q)}(F - (F)_Q) \bigr)\bigr\|_{\L^2(Q)} \\
    &\lesssim    \sum_{\ell=0}^{\infty} \bigg(\sum_{n=0}^{\infty} 2^{-\nu n} \bigl\|\ind_{C_\ell(Q)}(F - (F)_Q) \bigr\|^2_{\L^2(C_n(Q))} \bigg)^{\frac{1}{2}}\\
    &=  \sum_{\ell=0}^{\infty}2^{-\frac{\nu}{2} \ell} \|F - (F)_Q \|_{\L^2(2^{\ell+1}Q)}.
\end{align*}
Next, the aim is to control the right-hand side by a fractional Sobolev--Poincar\'e inequality. For a sufficiently good control on the dependency of the implicit constant on $\ell$, we introduce a telescopic sum and estimate
\begin{align*}
    &\|F - (F)_Q \|_{\L^2(2^{\ell+1}Q)}\\
    &\quad \leq \|F - (F)_{2^{\ell+1}Q} \|_{\L^2(2^{\ell+1}Q)} + \sum_{j=1}^{\ell+1}\|(F)_{2^{j}Q}  - (F)_{2^{j-1}Q} \|_{\L^2(2^{\ell+1}Q)}\\
    &\quad = \|F - (F)_{2^{\ell+1}Q} \|_{\L^2(2^{\ell+1}Q)} + |2^{\ell+1}Q|^\frac{1}{2}\sum_{j=1}^{\ell+1}|(F)_{2^{j}Q}  - (F)_{2^{j-1}Q}|_{\IC^{d \times d}}\\
    &\quad=  \|F - (F)_{2^{\ell+1}Q} \|_{\L^2(2^{\ell+1}Q)} + |2^{\ell+1}Q|^\frac{1}{2}\sum_{j=1}^{\ell+1}\frac{1}{|2^{j-1}Q|}\bigg| \int_{2^{j-1}Q} F- (F)_{2^jQ}\,\d x\bigg|_{\IC^{d \times d}}\\
    &\quad \leq \|F - (F)_{2^{\ell+1}Q} \|_{\L^2(2^{\ell+1}Q)} + |2^{\ell+1}Q|^\frac{1}{2}\sum_{j=1}^{\ell+1}\frac{1}{|2^{j-1}Q|^\frac{1}{2}} \|F- (F)_{2^jQ}\|_{\L^2(2^{j}Q)}
\end{align*}
where we used Hölder's inequality in the last step. Now, apply the fractional Sobolev--Poincar\'e inequality from~\cite[Lem.~2.2]{Fractional_poincare} componentwise with $p,q=2$ and $\alpha\in (0,1)$: There is a constant $C$, depending only on $d$, such that
\begin{align}
\label{eq:stupid application of Poincare on large cubes}
\begin{split}
    \|F - (F)_{2^{j}Q} \|_{\L^2(2^{j}Q)}
    \leq C|2^jQ|^\frac{\alpha}{d} \|F\|_{\dot\H^\alpha(2^jQ)} 
    \leq C t^\alpha 2^{\alpha j} \|F\|_{\dot\H^\alpha(2^jQ)}
\end{split}
\end{align}
for all $j\in \IN$. This leads to
\begin{align*}
    &\|F - (F)_Q \|_{\L^2(2^{\ell+1}Q)}\\
    &\quad \lesssim t^\alpha 2^{\alpha (\ell+1)} \|F\|_{\dot\H^\alpha(2^{\ell+1}Q)} + |2^{\ell+1}Q|^\frac{1}{2}\sum_{j=1}^{\ell+1}\frac{t^\alpha 2^{\alpha j} }{|2^{j-1}Q|^\frac{1}{2}} \|F\|_{\dot\H^\alpha(2^jQ)}\\
    &\quad \simeq t^\alpha 2^{\alpha \ell} \|F\|_{\dot\H^\alpha(2^{\ell+1}Q)} + t^\alpha 2^{\frac{d}{2} \ell}\sum_{j=1}^{\ell+1} 2^{\alpha j-\frac{d}{2}j} \|F\|_{\dot\H^\alpha(2^jQ)}.
\end{align*}
Summarizing, we showed for $0 < \nu^{\prime} < \nu$
\begin{align*}
    &\|(\Theta_t - \gamma_t \cdot \DyadicMax_t)F\|^2_{\L^2(Q)}\\
    &\quad \lesssim t^{2\alpha}\sum_{\ell=0}^{\infty}2^{-\nu^{\prime} \ell}\biggl( 2^{\alpha \ell} \|F\|_{\dot\H^\alpha(2^{\ell+1}Q)} +  2^{\frac{d}{2}\ell}\sum_{j=1}^{\ell+1} 2^{\alpha j-\frac{d}{2} j} \|F\|_{\dot\H^\alpha(2^jQ)} \biggr)^2\\
    &\quad \lesssim t^{2\alpha} \sum_{\ell=0}^{\infty}2^{-\nu^{\prime} \ell}\biggl( 2^{2\alpha \ell} \|F\|^2_{\dot\H^\alpha(2^{\ell+1}Q)} +  (\ell+1)2^{d \ell}\sum_{j=1}^{\ell+1} 2^{2\alpha j-d j} \|F\|^2_{\dot\H^\alpha(2^jQ)} \biggr).
\end{align*}
Now, summing over all $Q \in \cube_t$ yields
\begin{align*}
    &\|(\Theta_t - \gamma_t \cdot \DyadicMax_t)F\|_{\L^2}^2\\
    &\lesssim t^{2\alpha} \sum_{Q \in \smallcube_t}  \sum_{\ell=0}^{\infty}2^{-\nu^{\prime} \ell}\biggl( 2^{2\alpha \ell} \|F\|^2_{\dot\H^\alpha(2^{\ell+1}Q)} +  (\ell+1)2^{d\ell}\sum_{j=1}^{\ell+1} 2^{2\alpha j-d j} \|F\|^2_{\dot\H^\alpha(2^jQ)} \biggr) \\
    &=t^{2\alpha} \sum_{\ell=0}^{\infty}2^{-\nu^{\prime} \ell}\biggl( 2^{2\alpha \ell} \sum_{Q \in \smallcube_t}\|F\|^2_{\dot\H^\alpha(2^{\ell+1}Q)} +  (\ell+1)2^{d \ell}\sum_{j=1}^{\ell+1} 2^{2\alpha j-d j} \sum_{Q \in \smallcube_t}\|F\|^2_{\dot\H^\alpha(2^jQ)} \biggr) .
\end{align*}
Observe that
\begin{align*}
     \sum_{Q \in \smallcube_t}\|F\|^2_{\dot\H^\alpha(2^jQ)} 
     &=  \sum_{Q \in \smallcube_t}\, \int_{2^jQ}\int_{2^jQ} \frac{|F(x)-F(y)|_{\IC^{d \times d}}^2}{|x-y|^{d+2\alpha}}\,\d x \,\d y\\
     &\leq    \int_{\IR^d}\int_{\IR^d} \biggl(\sum_{Q \in \smallcube_t} \ind_{2^jQ}(x)\biggr)\frac{|F(x)-F(y)|_{\IC^{d \times d}}^2}{|x-y|^{d+2\alpha}}\,\d x \, \d y\\
     &\lesssim  2^{dj}  \int_{\IR^d}\int_{\IR^d} \frac{|F(x)-F(y)|_{\IC^{d \times d}}^2}{|x-y|^{d+2\alpha}}\,\d x \, \d y\\
     &=  2^{dj}  \|F\|^2_{\dot\H^\alpha(\IR^d)}
\end{align*}
for all $j\in\IN$. Thus, we conclude
\begin{align*}
    &\|(\Theta_t - \gamma_t \cdot \DyadicMax_t)F\|_{\L^2}^2\\
    &\lesssim t^{2\alpha} \sum_{\ell=0}^{\infty}2^{-\nu^{\prime} \ell}\biggl( 2^{(2\alpha+d) \ell}\|F\|^2_{\dot\H^\alpha(\IR^d)} +  (\ell+1)2^{d \ell}\sum_{j=1}^{\ell+1} 2^{2\alpha j}\|F\|^2_{\dot\H^\alpha(\IR^d)} \biggr)\\
    &\lesssim t^{2\alpha}\|F\|^2_{\dot\H^\alpha(\IR^d)}   \sum_{\ell=0}^{\infty}2^{-\nu^{\prime} \ell}\biggl( 2^{(2\alpha+d) \ell}+  (\ell+1)2^{(2\alpha + d) \ell}\biggr) \\
    &\lesssim t^{2\alpha}\|F\|^2_{\dot\H^\alpha(\IR^d)}
\end{align*}
where the series converges for every $\alpha\in (0,1)$ since we can choose $d+2\alpha <\nu^{\prime} <d+2 $. 
\end{proof}
To prove~\eqref{eq: principle part approximation}, we will first smooth out the ``harsh" approximation to get a control via quadratic estimates of some well-behaved operator and afterwards remove the smoothing again to gain back the full estimate. For this purpose, we introduce the following smoothing operator in the spirit of~\cite{ISEM}.
\begin{definition}
    For $t>0$ define the following \textit{smoothing} operators on $\L^2(\IR^d)$:
    \begin{align*}
        P_t\coloneqq (1-t^2\Delta)^{-1} \quad \text{and} \quad Q_t\coloneqq t\nabla(1-t^2\Delta)^{-1}.
    \end{align*}
\end{definition}
As a reminder, the reader should keep in mind that we agreed to use the same notation for extensions of operators on vector fields by acting componentwise.
\begin{proposition}
\label{prop:smoothened principal part approximation}
    For $F\in \L^2(\IR^d;\IC^{d\times d})$ the following smoothed principle part approximation holds
    \begin{align*}
        \int_0^\infty \|(\Theta_t - \gamma_t \cdot \DyadicMax_t) P_t F\|_{\L^2}^2 \, \frac{\d t}{t} \lesssim\|F\|_{\L^2}^2.
    \end{align*}
\end{proposition}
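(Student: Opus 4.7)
The plan is to apply Lemma~\ref{lem:uniform bound principal part approximation} with any fixed fractional differentiability $\alpha\in(0,1)$ directly to the smoothed function $P_t F$, and then to evaluate the resulting $t$-integral by passing to the Fourier side. Since $P_t$ is bounded on $\L^2(\IR^d;\IC^{d\times d})$ and smooths to all fractional Sobolev scales, we have $P_t F \in \L^2 \cap \H^\alpha(\IR^d;\IC^{d\times d})$ for every $t>0$ and $\alpha\in(0,1)$. Thus Lemma~\ref{lem:uniform bound principal part approximation} yields pointwise in $t$
\begin{align*}
    \|(\Theta_t - \gamma_t \cdot \DyadicMax_t) P_t F\|_{\L^2}^2 \lesssim t^{2\alpha} \|P_t F\|_{\dot\H^\alpha}^2.
\end{align*}
Integrating against $\d t/t$ and applying Fubini reduces the claim to showing
\begin{align*}
    \int_0^\infty t^{2\alpha}\|P_t F\|_{\dot\H^\alpha}^2 \, \frac{\d t}{t} \lesssim \|F\|_{\L^2}^2.
\end{align*}

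For this, I would invoke the standard equivalence between the Gagliardo seminorm used in the paper and its Fourier realization, that is $\|g\|_{\dot\H^\alpha}^2 \simeq \int_{\IR^d} |\xi|^{2\alpha}|\hat g(\xi)|^2\,\d\xi$ for $\alpha\in(0,1)$, applied componentwise to $P_t F$. Combined with the multiplier identity $\widehat{P_t F}(\xi) = (1+t^2|\xi|^2)^{-1}\hat F(\xi)$ and a further application of Fubini, this rewrites the integral above, up to a dimensional constant, as
\begin{align*}
    \int_{\IR^d} |\hat F(\xi)|^2 \int_0^\infty \frac{(t|\xi|)^{2\alpha}}{(1+(t|\xi|)^2)^2} \, \frac{\d t}{t} \, \d\xi.
\end{align*}
The substitution $s=t|\xi|$ renders the inner integral independent of $\xi$ and equal to the finite constant $C_\alpha = \int_0^\infty s^{2\alpha - 1}(1+s^2)^{-2}\,\d s$, which converges whenever $\alpha\in(0,2)$. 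Plancherel's theorem then closes the bound with $\|F\|_{\L^2}^2$ on the right-hand side.

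There is no substantial obstacle in this argument; the proposition is in essence a Plancherel-type upgrade of Lemma~\ref{lem:uniform bound principal part approximation}. The only point worth noting is the compatibility of the two ranges of $\alpha$ involved: Lemma~\ref{lem:uniform bound principal part approximation} requires $\alpha\in(0,1)$, while the Fourier integral converges on the larger range $\alpha\in(0,2)$, so that any choice (for instance $\alpha=1/2$) suffices. Conceptually, the smoothing $P_t$ supplies precisely the right amount of fractional regularity at scale $t$ to absorb the factor $t^\alpha$ produced by the principle part approximation, and the $t$-integration then collapses via Plancherel into a pure $\L^2$-bound on $F$.
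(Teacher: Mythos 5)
Your proposal is correct and follows essentially the same route as the paper: apply Lemma~\ref{lem:uniform bound principal part approximation} to $P_tF$, identify the Gagliardo seminorm with the fractional Laplacian, and then establish the quadratic estimate $\int_0^\infty \|t^\alpha(-\Delta)^{\alpha/2}(1-t^2\Delta)^{-1}F\|_{\L^2}^2\,\frac{\d t}{t}\lesssim\|F\|_{\L^2}^2$. The only cosmetic difference is that the paper invokes McIntosh's theorem for the function $f(z)=z^{\alpha/2}(1+z)^{-1}$, whereas you compute the same quadratic estimate explicitly via Plancherel and the substitution $s=t|\xi|$, which for the Laplacian on $\L^2(\IR^d)$ amounts to the identical calculation.
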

\begin{proof}
    Due to Lemma~\ref{lem:uniform bound principal part approximation} and the characterization of fractional Sobolev-spaces through the fractional Laplacian (see~\cite[Prop.~3.6]{Hitchhiker}), we have
    \begin{align*}
         \|(\Theta_t - \gamma_t \cdot \DyadicMax_t) P_t F\|_{\L^2} \lesssim t^\alpha\|P_tF\|_{\dot \H^\alpha} = \|t^\alpha(-\Delta)^\frac{\alpha}{2}(1- t^2 \Delta)^{-1}F\|_{\L^2}
    \end{align*}
    for $\alpha\in (0,1)$. We conclude by quadratic estimates of the form~\eqref{Eq: Quadratic estimate} with $f(z) = z^\frac{\alpha}{2}(1+z)^{-1}$ which are valid for the Laplacian by McIntosh's theorem~\cite[Thm.~7.3.1]{Haase}, since the Laplacian admits a bounded $\H^{\infty}$-calculus on $\L^2 (\IR^d)$.
\end{proof}
To remove the smoothing, we split
\begin{align}
\label{eq: ppa splitting}
    (\Theta_t - \gamma_t \cdot \DyadicMax_t) = (\Theta_t - \gamma_t \cdot \DyadicMax_t)P_t + \Theta_t(1-P_t) -  \gamma_t \cdot \DyadicMax_t(1-P_t)
\end{align}
and elaborate square function estimates for the last two terms.
\begin{proposition}
\label{prop:second term estimate}
    For every $u\in \H^1_\sigma(\IR^d)$ we have the square function estimate
    \begin{align*}
        \int_0^\infty \|\Theta_t(1-P_t)\nabla u\|_{\L^2}^2 \, \frac{\d t}{t}\lesssim \|\nabla u\|_{\L^2}^2.
    \end{align*}
\end{proposition}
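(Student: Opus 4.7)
The plan is to recognise that this estimate collapses, via an algebraic identity, to a standard quadratic estimate for the Laplacian. No off-diagonal machinery from Section~\ref{Sec: L2-theory of the generalized Stokes resolvent} is actually needed here; the nonlocality of $\Theta_t$ will be hidden inside one uniform resolvent bound.

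First I would observe that $P_t = (1-t^2\Delta)^{-1}$ is a Fourier multiplier and therefore commutes with both $\nabla$ and $\divergence$. Hence $(1-P_t)\nabla u = \nabla w$ for $w \coloneqq (1-P_t)u$, and $P_t$ preserves divergence-freeness so that $w \in \H^1_\sigma(\IR^d)$. Applying Lemma~\ref{Lem: Product structure} to $w$ gives $\cP\divergence(\mu \nabla w) = -\cA w$, hence
\begin{align*}
\Theta_t(1-P_t)\nabla u
= -t(1+t^2\cA)^{-1}\cP\divergence(\mu \nabla w)
= t\cA(1+t^2\cA)^{-1}w
= \tfrac{1}{t}\bigl[1-(1+t^2\cA)^{-1}\bigr](1-P_t)u.
\end{align*}
This splits the task into estimating $\tfrac{1}{t}(1-P_t)u$ and $\tfrac{1}{t}(1+t^2\cA)^{-1}(1-P_t)u$ separately.

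For the first piece I would write $\tfrac{1}{t}(1-P_t)u = tL(1+t^2L)^{-1}u$ with $L \coloneqq -\Delta$, and factor this as $f(t^2L) L^{1/2} u$ where $f(z)\coloneqq\sqrt{z}/(1+z)$. Since $L$ admits a bounded $\H^\infty$-calculus on $\L^2(\IR^d)$, the change of variables $s = t^2$ combined with McIntosh's theorem (\cite[Thm.~7.3.1]{Haase}) yields
\begin{align*}
\int_0^\infty \bigl\|\tfrac{1}{t}(1-P_t)u\bigr\|_{\L^2}^2 \, \frac{\d t}{t} \lesssim \|L^{1/2}u\|_{\L^2}^2 \simeq \|\nabla u\|_{\L^2}^2.
\end{align*}
For the second piece the uniform $\L^2$-bound on $(1+t^2\cA)^{-1}$ from Proposition~\ref{Prop: L2 resolvent bounds} reduces its square function estimate to the first piece at the cost of a constant.

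The only real subtlety is spotting the commutation $(1-P_t)\nabla u = \nabla(1-P_t)u$ that allows us to keep $(1-P_t)u$ inside $\H^1_\sigma(\IR^d)$ and then invoke Lemma~\ref{Lem: Product structure}; once this identity is in hand the remaining steps are routine, so I do not anticipate a serious obstacle in this proposition.
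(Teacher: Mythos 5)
Your argument is correct and is essentially the paper's proof: both hinge on the commutation $(1-P_t)\nabla u=\nabla(1-P_t)u$ with $(1-P_t)u\in\H^1_\sigma(\IR^d)$, rewrite $\Theta_t(1-P_t)\nabla u$ as $\bigl(1-(1+t^2A)^{-1}\bigr)$ applied to $-t\Delta(1-t^2\Delta)^{-1}u$, and then use the uniform resolvent bound together with the standard quadratic estimate for the Laplacian (your McIntosh-theorem step is exactly the content of the cited \cite[Lem.~13.9]{ISEM}). The only cosmetic difference is that you split off the resolvent term rather than bounding $1-(1+t^2A)^{-1}$ as a single uniformly bounded family.
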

\begin{proof}
    First observe that for every $u\in \H^1_\sigma(\IR^d)$ we also have $-t^2\Delta (1-t^2 \Delta)^{-1}u \in \H^1_\sigma(\IR^d)$ for all $t>0$ since the Laplacian commutes with derivatives. Thus, we can compute by virtue of~\eqref{eq:introducing theta}
    \begin{align*}
        \Theta_t (1- P_t) \nabla u 
        &=(\Theta_t \nabla) (1-P_t)u \\
        &=\bigl(tA(1+t^2A)^{-1}\bigr) \bigl(-t^2 \Delta (1-t^2 \Delta)^{-1}u\bigr)\\
        &= \bigl(1- (1+t^2A)^{-1}\bigr) \bigl(-t \Delta (1-t^2 \Delta)^{-1}u\bigr) \\
        &= \bigl(1- (1+t^2A)^{-1}\bigr) Q_t^* \nabla u,
    \end{align*}
    where $Q_t^* \nabla u = (Q_t^* \nabla u_1 , \dots , Q_t^* \nabla u_d)^{\top}$. By the sectoriality of $A$, we conclude
    \begin{align*}
        \|\Theta_t (1- P_t) \nabla u\|_{\L^2} \lesssim \|Q_t^* \nabla u\|_{\L^2}
    \end{align*}
    such that the claim follows now from~\cite[Lem.~13.9]{ISEM}.
\end{proof}
\begin{proposition}
\label{prop:third term estimate}
    For all $u\in \H^1_\sigma(\IR^d)$ we have
    \begin{align*}
        \int_0^\infty \|\gamma_t \cdot \DyadicMax_t(1-P_t) \nabla u \|_{\L^2}^2\,\frac{\d t}{t}\lesssim \|\nabla u\|_{\L^2}^2.
    \end{align*}
\end{proposition}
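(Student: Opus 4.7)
The plan is to reduce to known quadratic estimates for the Laplacian, following the strategy of~\cite{ISEM}. The starting observation is that the proof of Lemma~\ref{lem:uniform bound for principal part averages} actually yields the sharper bound $\|\gamma_t \cdot \DyadicMax_t F\|_{\L^2} \lesssim \|\DyadicMax_t F\|_{\L^2}$: applying that lemma to $F = \ind_Q e_{jk}$ (for which $(F)_Q = e_{jk}$ on $Q$) gives $\|(\gamma_t)_{jk}\|_{\L^2(Q)} \lesssim |Q|^{1/2}$ for every $Q \in \cube_t$, and then for arbitrary $F$,
\[
    \|\gamma_t \cdot \DyadicMax_t F\|_{\L^2(Q)}^2 \lesssim |(F)_Q|_{\IC^{d\times d}}^2 \, \|\gamma_t\|_{\L^2(Q)}^2 \lesssim |Q| \, |(F)_Q|_{\IC^{d\times d}}^2 = \|\DyadicMax_t F\|_{\L^2(Q)}^2,
\]
which sums over $Q \in \cube_t$ to the claim. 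Consequently it suffices to prove the $\gamma_t$-free square function estimate $\int_0^\infty \|\DyadicMax_t(1-P_t)\nabla u\|_{\L^2}^2 \, \frac{\d t}{t} \lesssim \|\nabla u\|_{\L^2}^2$.

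I would then exploit the algebraic identity $\DyadicMax_t(1-P_t) = (\DyadicMax_t - P_t) + (I-\DyadicMax_t) P_t$ and treat the two summands separately. For the second summand, the Poincar\'e inequality on each $Q \in \cube_t$ gives $\|(I-\DyadicMax_t) g\|_{\L^2}^2 \lesssim t^2 \|\nabla g\|_{\L^2}^2$. Taking $g = P_t \nabla u$ and using that $P_t$ and $\nabla$ commute as Fourier multipliers, this becomes $\|(I-\DyadicMax_t) P_t \nabla u\|_{\L^2}^2 \lesssim \|Q_t \nabla u\|_{\L^2}^2$, and the quadratic estimate $\int_0^\infty \|Q_t \nabla u\|_{\L^2}^2 \, \frac{\d t}{t} \lesssim \|\nabla u\|_{\L^2}^2$ follows from the bounded $\H^{\infty}$-calculus of $-\Delta$ on $\L^2(\IR^d)$ with the $\Psi$-class symbol $\sqrt{z}/(1+z)$, exactly as in the proof of Proposition~\ref{prop:smoothened principal part approximation}. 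For the first summand, I would invoke the classical square function estimate $\int_0^\infty \|(\DyadicMax_t - P_t) f\|_{\L^2}^2 \, \frac{\d t}{t} \lesssim \|f\|_{\L^2}^2$ valid for all $f \in \L^2(\IR^d)$, applied componentwise to $f = \nabla u$; this comparison between the dyadic averaging and the heat resolvent at scale $t$ is a standard ingredient in proofs of Kato's conjecture and can be taken from~\cite{ISEM}.

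The main obstacle is this last comparison estimate for $\DyadicMax_t - P_t$: since $\DyadicMax_t$ is not a Fourier multiplier, the $\H^{\infty}$-calculus is not directly applicable and one must quantify the cancellation between $\DyadicMax_t$ and $P_t$ by other means. A standard route is to introduce a smooth low-pass multiplier $\psi_t$ at scale $t$ as an intermediary: the difference $\psi_t - P_t$ lies in the $\Psi$-class and is handled by Plancherel, whereas $\DyadicMax_t - \psi_t$ is bounded by a Schur-type test on the kernels, exploiting that both operators are approximate identities at scale $t$.
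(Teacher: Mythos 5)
Your proposal is correct and follows essentially the same route as the paper: the paper reduces to the $A$-independent estimate $\int_0^\infty \|\DyadicMax_t(1-P_t)\nabla u\|_{\L^2}^2\,\frac{\d t}{t}\lesssim\|\nabla u\|_{\L^2}^2$ by writing $\DyadicMax_t=\DyadicMax_t^2$ and invoking Lemma~\ref{lem:uniform bound for principal part averages}, which is exactly the content of your bound $\|\gamma_t\cdot\DyadicMax_tF\|_{\L^2}\lesssim\|\DyadicMax_tF\|_{\L^2}$, and then cites~\cite[Prop.~13.13]{ISEM} for the remaining standard square function estimate, which your second half merely sketches a proof of (your splitting into $(\DyadicMax_t-P_t)$ and $(I-\DyadicMax_t)P_t$ with Poincar\'e and the $Q_t$-quadratic estimate is the standard argument behind that citation). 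The only loose point is the claim that $\DyadicMax_t-\psi_t$ is handled by a Schur test on spatial kernels alone --- one actually needs almost-orthogonality in the scale variable (e.g.\ a Schur/Cotlar bound on compositions with a Littlewood--Paley family) --- but since the paper itself delegates this entire classical estimate to the literature, this does not constitute a gap relative to the paper's proof.
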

\begin{proof}
    Observe that $\DyadicMax_t = \DyadicMax_t^2$. Then the uniform $\L^2$-bounds of $\gamma_t \cdot \DyadicMax_t$ by Lemma~\ref{lem:uniform bound for principal part averages} reduces the square function estimate to
    \begin{align*}
        \int_0^\infty \|\DyadicMax_t(1-P_t) \nabla u \|_{\L^2}^2\,\frac{\d t}{t}\lesssim \|\nabla u\|_{\L^2}^2 \qquad (u\in \H^1_\sigma(\IR^d)).
    \end{align*}
    This estimate is now independent of the operator $A$ and well-known, see, e.g.,~\cite[Prop.~13.13]{ISEM}.
\end{proof}
The proof of the above proposition uses an interpolation inequality, which will be of use in the last section. Let us state it at this point and refer to~\cite[Lem.~13.12]{ISEM} for a detailed proof.
\begin{lemma}
\label{lem:palmen}
There is a constant $C>0$ such that for all dyadic cubes $Q \in \cube$ and all $u \in \H^1(\IR^d;\IC^d)$ we have
\begin{align*}
    \biggl| \fint_Q \nabla u \, \d x \biggr|_{\IC^{d \times d}}^2 
    \leq \frac{C}{\ell(Q)} \biggl(\fint_Q |u|^2 \, \d x \biggr)^{\frac{1}{2}} \biggl(\fint_Q |\nabla u|_{\IC^{d \times d}}^2 \, \d x \biggr)^{\frac{1}{2}} .
\end{align*}
\end{lemma}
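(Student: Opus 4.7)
My plan is to reduce the inequality componentwise to a one-dimensional interpolation estimate and then reassemble it by Fubini and Cauchy--Schwarz. Fixing indices $\alpha, i \in \{1, \ldots, d\}$, write $Q = Q' \times [a,b]$ with $b - a = \ell(Q)$, where the interval lies along the $e_\alpha$-direction and $Q' \subseteq \IR^{d-1}$ is the transverse face. The fundamental theorem of calculus gives
\begin{equation*}
    \int_Q \partial_\alpha u_i \, \d x = \int_{Q'} \bigl( u_i(b, x') - u_i(a, x') \bigr) \, \d x',
\end{equation*}
so matters reduce to the one-dimensional estimate
\begin{equation*}
    |v(b) - v(a)|^2 \leq C\, \| v \|_{\L^2(a,b)} \| v' \|_{\L^2(a,b)} \qquad (v \in \H^1(a,b))
\end{equation*}
with some universal $C$, applied pointwise in $x'$ to $v = u_i(\cdot, x')$ (admissible for a.\,e.\ $x'$ by Fubini).

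To establish the one-dimensional estimate, I would introduce a trapezoidal cutoff $\varphi_\delta \in \Lip([a,b])$, parametrized by $\delta \in (0, (b-a)/2]$, vanishing at the endpoints, equal to $1$ on $[a+\delta, b-\delta]$, and affine with slope $\pm 1/\delta$ on the two ramps, so that $\|\varphi_\delta'\|_{\L^2}^2 = 2/\delta$ and $\|1 - \varphi_\delta\|_{\L^2}^2 = 2\delta/3$. Splitting $v' = \varphi_\delta v' + (1-\varphi_\delta) v'$ and integrating by parts in the first term (whose boundary contributions vanish because $\varphi_\delta(a) = \varphi_\delta(b) = 0$), one obtains
\begin{equation*}
    v(b) - v(a) = \int_a^b v'\, (1-\varphi_\delta) \, \d t - \int_a^b v\, \varphi_\delta' \, \d t,
\end{equation*}
and two applications of Cauchy--Schwarz give $|v(b) - v(a)| \leq \sqrt{2\delta/3}\,\|v'\|_{\L^2} + \sqrt{2/\delta}\,\|v\|_{\L^2}$. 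If the unconstrained optimum $\delta \sim \|v\|_{\L^2}/\|v'\|_{\L^2}$ lies in the admissible range, squaring immediately yields the claimed bound. In the complementary regime $\|v\|_{\L^2} \gtrsim (b-a) \|v'\|_{\L^2}$, the trivial estimate $|v(b) - v(a)|^2 \leq (b-a)\|v'\|_{\L^2}^2$ from Cauchy--Schwarz on $\int_a^b v' \, \d t$ can be rewritten as $(b-a)\|v'\|_{\L^2}\cdot\|v'\|_{\L^2} \lesssim \|v\|_{\L^2}\|v'\|_{\L^2}$, which concludes this case as well.

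With the one-dimensional inequality in hand, Cauchy--Schwarz in $x'$ yields
\begin{equation*}
    \biggl| \int_Q \partial_\alpha u_i \, \d x \biggr|^2 \leq |Q'| \int_{Q'} |u_i(b,x') - u_i(a,x')|^2 \, \d x' \leq C |Q'|\, \|u_i\|_{\L^2(Q)} \|\partial_\alpha u_i\|_{\L^2(Q)}.
\end{equation*}
Since $|Q'| = \ell(Q)^{d-1}$, dividing by $|Q|^2 = \ell(Q)^{2d}$ and rewriting norms as averages produces the componentwise estimate
\begin{equation*}
    \biggl| \fint_Q \partial_\alpha u_i \, \d x \biggr|^2 \leq \frac{C}{\ell(Q)} \biggl( \fint_Q |u_i|^2 \, \d x \biggr)^{\!1/2} \biggl( \fint_Q |\partial_\alpha u_i|^2 \, \d x \biggr)^{\!1/2}.
\end{equation*}
Summing over $\alpha$ and $i$ and applying Cauchy--Schwarz in each index converts this into the matrix-valued inequality of the lemma, with an additional factor $\sqrt{d}$ absorbed into the constant.

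The main obstacle is the one-dimensional auxiliary inequality, which resists a single Poincar\'e- or Sobolev-type argument because its left-hand side is invariant under the constant shift $v \mapsto v + c$ while $\|v\|_{\L^2}$ is not. The cutoff-plus-case-analysis argument sketched above handles the ``small $v$'' and ``large $v$'' regimes separately; alternatively one may view $v \mapsto v(a), v(b)$ as bounded functionals on the interpolation space $[\L^2,\H^1]_{1/2}$, but the direct route seems shorter.
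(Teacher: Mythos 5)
Your proof is correct. The paper itself gives no argument for this lemma --- it simply defers to \cite[Lem.~13.12]{ISEM}, where the standard proof runs the cutoff trick directly in $\IR^d$: one writes $\fint_Q \partial_\alpha u_i = \fint_Q (1-\eta_\eps)\partial_\alpha u_i - \fint_Q u_i\,\partial_\alpha\eta_\eps$ for a cutoff $\eta_\eps$ equal to $1$ except on a collar of width $\eps\,\ell(Q)$ inside $Q$, applies Cauchy--Schwarz using the small measure of the collar, and optimizes in $\eps$. Your argument is the one-dimensional slicing version of exactly this idea: the trapezoidal $\varphi_\delta$, the integration by parts made possible by its vanishing at the endpoints, and the optimization of $\delta$ against the two norms play the same roles, with the case distinction $\delta \leq (b-a)/2$ versus the trivial Cauchy--Schwarz bound correctly handling the constraint that the paper's $d$-dimensional version handles by capping $\eps$ at $1$. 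The reassembly is sound: the ACL/Fubini justification for applying the fundamental theorem of calculus on almost every line is the right technical remark (alternatively one proves the inequality for $u \in \C^\infty(\overline{Q};\IC^d)$ and concludes by density, since both sides are continuous in $\H^1(Q)$), and the two further applications of Cauchy--Schwarz --- in $x'$ and then over the index pairs $(\alpha,i)$ --- correctly produce the stated estimate up to a dimensional constant. What the slicing route buys is that the only nontrivial input is a genuinely one-dimensional inequality with explicit constants; what it costs is the measure-theoretic care about restrictions to lines, which the $d$-dimensional cutoff argument avoids.
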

A combination of Propositions~\ref{prop:smoothened principal part approximation},~\ref{prop:second term estimate} and~\ref{prop:third term estimate} with the splitting~\eqref{eq: ppa splitting} establishes the full principal part approximation.

\begin{proposition}[Principal part approximation]
\label{prop:principal part approximation}
We have the square function estimate
\begin{align*}
     \int_0^\infty \|(\Theta_t - \gamma_t \cdot \DyadicMax_t) \nabla u\|_{\L^2}^2 \, \frac{\d t}{t} \lesssim \|\nabla u\|_{\L^2}^2 \qquad (u \in \H^1_\sigma(\IR^d)).
\end{align*}
\end{proposition}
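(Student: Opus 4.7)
The plan is essentially dictated by the splitting in~\eqref{eq: ppa splitting}: write $(\Theta_t - \gamma_t\cdot\DyadicMax_t)\nabla u$ as a sum of three pieces and estimate each square function integral separately using the three preceding propositions. Concretely, by Minkowski's inequality in the Hilbert space $\L^2((0,\infty),\d t/t;\L^2(\IR^d;\IC^d))$, it suffices to bound each of
\begin{align*}
I_1 &\coloneqq \int_0^\infty \|(\Theta_t - \gamma_t\cdot \DyadicMax_t)P_t\nabla u\|_{\L^2}^2\,\frac{\d t}{t}, \\
I_2 &\coloneqq \int_0^\infty \|\Theta_t(1-P_t)\nabla u\|_{\L^2}^2\,\frac{\d t}{t}, \\
I_3 &\coloneqq \int_0^\infty \|\gamma_t\cdot \DyadicMax_t(1-P_t)\nabla u\|_{\L^2}^2\,\frac{\d t}{t}
\end{align*}
by a multiple of $\|\nabla u\|_{\L^2}^2$.

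The next step is to invoke the three propositions in sequence. For $I_1$, apply Proposition~\ref{prop:smoothened principal part approximation} with $F = \nabla u \in \L^2(\IR^d;\IC^{d\times d})$, which yields $I_1 \lesssim \|\nabla u\|_{\L^2}^2$. For $I_2$, Proposition~\ref{prop:second term estimate} gives the bound directly. For $I_3$, Proposition~\ref{prop:third term estimate} does the same. Summing the three estimates concludes the proof.

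There is essentially no substantive obstacle at this final assembly stage, since the splitting~\eqref{eq: ppa splitting} has been arranged precisely so that each summand falls into the regime handled by one of the preceding results. The actual difficulty has been absorbed into those three ingredients: Proposition~\ref{prop:smoothened principal part approximation} exploits the fractional Sobolev gain $t^\alpha \|F\|_{\dot\H^\alpha}$ from Lemma~\ref{lem:uniform bound principal part approximation} combined with the bounded $\H^\infty$-calculus of $-\Delta$ on $\L^2(\IR^d)$; Proposition~\ref{prop:second term estimate} relies on the algebraic identity that rewrites $\Theta_t(1-P_t)\nabla u$ as $(1-(1+t^2 A)^{-1})Q_t^*\nabla u$ followed by sectoriality of $A$ and a classical square function estimate for $Q_t^*\nabla$; and Proposition~\ref{prop:third term estimate} reduces, via the uniform $\L^2$-boundedness of $\gamma_t\cdot\DyadicMax_t$ from Lemma~\ref{lem:uniform bound for principal part averages}, to a standard coefficient-independent estimate for $\DyadicMax_t(1-P_t)\nabla u$.
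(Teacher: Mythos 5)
Your proposal is correct and matches the paper exactly: the paper's proof of this proposition consists precisely of combining the splitting~\eqref{eq: ppa splitting} with Propositions~\ref{prop:smoothened principal part approximation},~\ref{prop:second term estimate} and~\ref{prop:third term estimate}, applying the first with $F=\nabla u$. Your summary of where the real work lies in the three ingredients is also accurate.
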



\section{The Carleson measure argument}
\label{Sec: The Carleson measure argument}

\noindent In order to finish the proof of Theorem~\ref{Thm: Kato} it suffices, by virtue of Propositions~\ref{prop:reduction to Carleson} and~\ref{prop:principal part approximation}, to show that
\begin{align*}
 |\gamma_t(x) \cdot \P |_{\cL(\IC^{d \times d} , \IC^d)}^2 \, \frac{\d x \, \d t}{t}
\end{align*}
is a Carleson measure. We shortly recapitulate the notion of a Carleson measure. 

\begin{definition}
A Borel measure $\nu$ on $\IR^{d + 1}_+$ is called a \emph{Carleson measure} if there exists $C > 0$ such that
\begin{align*}
 \nu(R(Q)) \leq C \lvert Q \rvert \qquad (Q \in \cube),
\end{align*}
where $R(Q) \coloneqq Q \times (0 , \ell(Q)]$ denotes a \emph{Carleson box}. The infimum over all constants $C$ is called the \emph{Carleson norm} of $\nu$ and denoted by $\|\nu\|_{\cC}$.
\end{definition}

For elliptic operators in divergence form a key idea in the proof of the Kato property is to use a sectorial decomposition of the range of the principle part. In our situation we have to decompose the range of $\gamma_t \cdot \P$ which is a proper subset of the range of $\gamma_t \cdot$. Since $\gamma_t (x)$ is a $(d\times d)$-matrix with entries in $\IC^d$ we have for $x \in \IR^d$ and $t > 0$
\begin{align*}
 \gamma_t (x) \cdot \P  \subseteq \cV \coloneqq \{ \zeta \cdot \P \colon \zeta \in (\IC^d)^{d \times d} \} \subseteq \cL(\IC^{d \times d} , \IC^d)
\end{align*}
where $\zeta \cdot \P$ is defined in the same way as $\gamma_t (x) \cdot \P$.

\begin{lemma}
\label{Lem: V closed subspace}
$\cV$ is a closed subspace of $\cL(\IC^{d \times d} , \IC^d)$.
\end{lemma}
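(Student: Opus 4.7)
The plan is to exploit the finite-dimensionality of the ambient space. The space $\cL(\IC^{d \times d}, \IC^d)$ is finite-dimensional (of dimension $d^3$), so in order to prove closedness it is enough to verify that $\cV$ is a linear subspace.

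To that end, I would first observe that the assignment $\zeta \mapsto \zeta \cdot$ gives a linear bijection between $(\IC^d)^{d \times d}$ and $\cL(\IC^{d \times d}, \IC^d)$: given any $T \in \cL(\IC^{d \times d}, \IC^d)$, define $\zeta_{jk} \coloneqq T(e_{jk}) \in \IC^d$, and then $T(G) = \sum_{j,k} G_{jk} T(e_{jk}) = \zeta \cdot G$. Composition with the fixed linear map $\P$ is then again linear in $\zeta$, and $\cV$ is precisely the image of $(\IC^d)^{d \times d}$ under the linear map $\zeta \mapsto \zeta \cdot \P$. As the linear image of a vector space, $\cV$ is a subspace, which together with the finite-dimensionality of $\cL(\IC^{d \times d}, \IC^d)$ gives closedness.

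If one wants a more intrinsic description (which will likely be useful later when decomposing the range of $\gamma_t \cdot \P$), one can identify
\begin{align*}
 \cV = \{ T \in \cL(\IC^{d \times d}, \IC^d) : T(\Id_{d \times d}) = 0 \}.
\end{align*}
Indeed, since $\P \Id_{d \times d} = \Id_{d \times d} - \tfrac{d}{d} \Id_{d \times d} = 0$, every $\zeta \cdot \P$ annihilates $\Id_{d \times d}$. Conversely, if $T \in \cL(\IC^{d \times d}, \IC^d)$ satisfies $T(\Id_{d \times d}) = 0$, write $T = \zeta \cdot$ as above; then for any $G \in \IC^{d \times d}$ we have $G = \P G + \tfrac{\tr(G)}{d} \Id_{d \times d}$, so $T(G) = T(\P G) = \zeta \cdot \P G$, proving $T \in \cV$. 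This representation makes $\cV$ the kernel of the continuous evaluation functional $T \mapsto T(\Id_{d \times d})$, which yields closedness independently.

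I do not anticipate any real obstacle here: the statement is a soft linear-algebraic fact whose only purpose is to legitimize later decompositions of the range of $\gamma_t \cdot \P$ into a finite union of sectors, and either of the two arguments above suffices.
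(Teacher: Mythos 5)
Your proof is correct, and both of your arguments are sound: the identification of $\cL(\IC^{d\times d},\IC^d)$ with $(\IC^d)^{d\times d}$ via $\zeta_{jk}=T(e_{jk})$ is exactly right, and the characterization $\cV=\{T: T(\Id_{d\times d})=0\}$ checks out since $\P\,\Id_{d\times d}=0$ and $G=\P G+\tfrac{\tr G}{d}\Id_{d\times d}$. The paper argues slightly differently: it takes a convergent sequence $\xi_n=\zeta_n\cdot\P\to\xi$, notes that each $\xi_n$ satisfies $\xi_n G=\xi_n\P G$ because $\P$ is a projection, passes this identity to the limit, and then writes $\xi=\zeta\cdot\P$ with $\zeta_{jk}=\xi(e_{jk})$ --- in effect a sequential verification that the property ``$T=T\circ\P$'' is closed under limits. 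Your first argument short-circuits this by observing that $\cV$ is the image of a linear map into a finite-dimensional space, hence automatically a closed subspace; your second gives the cleanest intrinsic description, exhibiting $\cV$ as the kernel of the continuous evaluation map $T\mapsto T(\Id_{d\times d})$. All three proofs rest on the same representation $T=\zeta\cdot$ with $\zeta_{jk}=T(e_{jk})$, so the differences are presentational rather than substantive; your versions are, if anything, a bit more economical, and the kernel characterization could indeed be convenient later when covering the unit sphere of $\cV$ by cones.
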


\begin{proof}
Let $(\xi_n)_{n \in \IN} \subseteq \cV$ converge to some $\xi \in \cL (\IC^{d \times d} , \IC^d)$. Since $\xi_n = \zeta_n \cdot \P$ for some $\zeta_n \in (\IC^d)^{d \times d}$ and $\P$ is a projection we get
\begin{align*}
 \xi_n G = \zeta_n \cdot \P G = \zeta_n \cdot \P^2 G = \xi_n \P G \qquad (G \in \IC^{d \times d}).
\end{align*}
 As a consequence we find that
\begin{align*}
 \xi G = \xi \P G = \sum_{j , k = 1}^{\infty} \xi (e_{j k}) (\P G)_{j k} = \zeta \cdot \P G,
\end{align*}
where $\zeta_{j k} \coloneqq \xi (e_{j k})$ and $e_{j k}$ was defined in Definition~\ref{defi:gamma}.
\end{proof}

The sectorial decomposition will now be defined with respect to the space $\cV$. For the sake of brevity, we will write the $\cL(\IC^{d \times d} , \IC^d)$-norm of elements in $\cV$ as $\lvert \, \bigdot \, \rvert_{\cV}$.

\begin{definition}
\label{defi:cone in Cn}
Let $\eps>0$. For $\xi \in \cV$ with $|\xi|_{\cV} = 1$, define the open cone with central axis $\xi$ by
\begin{align*}
    \Gamma_\xi^\eps \coloneqq \bigl\{ w\in \cV \setminus \{0\} \mid \bigl|\tfrac{w}{|w|_{\cV}} - \xi\bigr|_{\cV} < \eps \bigr\}.
\end{align*}
\end{definition}
Our goal is now to prove the following Carleson measure estimate, where we restrict the values of the principal part to a cone $\Gamma_\xi^\eps$.

\begin{proposition}[Directional Carleson estimate]
\label{prop:key estimate Kato}
There is a choice of $\eps>0$, depending only on $d$, $\mu_{\bullet}$ and $\mu^{\bullet}$, such that for every $\xi \in \cV$ with $\lvert \xi \rvert_{\cV} = 1$ we have
\begin{align*}
    \iint_{R(Q)} |\gamma_t(x) \cdot \P |_{\cV}^2 \, \ind_{\Gamma_\xi^\eps}(\gamma_t(x) \cdot \P) \, \frac{\d x \, \d t}{t} \lesssim |Q| \qquad (Q \in \cube).
\end{align*}
\end{proposition}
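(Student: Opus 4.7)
My plan is to execute the classical $T(b)$-argument from~\cite[Sec.~14]{ISEM}, with a test field tailored to the Stokes setting via an annular Bogovski\u{\i} correction. The central Stokes-specific difficulty is reconciling two competing requirements on the test field: divergence-freeness (so that it lives in $\H^1_\sigma(\IR^d)$ and can be fed into $A$) and locality (so that the nonlocal off-diagonal bounds of Corollary~\ref{Cor: theta OD} are usable). A plain cutoff destroys the first, a Leray-type projection destroys the second, and a standard Bogovski\u{\i} correction on $2Q$ leaves a residual $\nabla w_Q$ that is spread across $Q$ and contaminates the stopping-time inner product; the annular variant on $2Q \setminus Q$ will be the clean resolution.

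\textbf{Reduction to a trace-free test matrix.} Given $\xi \in \cV$ with $|\xi|_{\cV} = 1$, since $\P$ is the orthogonal projection onto trace-free matrices in $\IC^{d\times d}$ and $\xi = \xi \circ \P$, the operator norm of $\xi$ is attained on the trace-free unit sphere. I pick a trace-free unit matrix $\nu_0 \in \IC^{d\times d}$ with $|\xi \nu_0|_{\IC^d} \geq 1$. For $\eps \in (0,1/2)$ and any $\gamma_t(x) \cdot \P \in \Gamma_\xi^{\eps}$, a triangle inequality yields $|\gamma_t(x) \cdot \nu_0|_{\IC^d} = |\gamma_t(x) \cdot \P \nu_0|_{\IC^d} \geq (1-\eps)|\gamma_t(x) \cdot \P|_{\cV}$. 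Hence it suffices, uniformly in trace-free unit $\nu_0$, to show $\iint_{R(Q)} |\gamma_t(x) \cdot \nu_0|^2 \, \d x \, \d t / t \lesssim |Q|$.

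\textbf{Construction of the test field.} Let $\eta_Q \in \C_c^\infty(\IR^d)$ with $\eta_Q \equiv 1$ on $Q$, $\supp \eta_Q \subseteq 2Q$ and $\|\nabla \eta_Q\|_\infty \lesssim \ell(Q)^{-1}$. The truncation $\eta_Q \nu_0 (x - x_Q)$ has divergence $g_Q \coloneqq \nu_0(x-x_Q) \cdot \nabla \eta_Q$, supported in $2Q \setminus Q$ with zero mean. An annular Bogovski\u{\i} construction on $2Q\setminus Q$ produces $w_Q \in \H^1_0(\IR^d ; \IC^d)$ with $\supp w_Q \subseteq 2Q \setminus Q$, $\divergence w_Q = g_Q$ and $\|\nabla w_Q\|_{\L^2} \lesssim |Q|^{1/2}$ by scaling. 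Then
\begin{align*}
 \tilde u_Q \coloneqq \eta_Q \nu_0(x - x_Q) - w_Q \ \in \ \H^1_{\sigma}(\IR^d)
\end{align*}
is supported in $2Q$, satisfies $\nabla \tilde u_Q \equiv \nu_0$ on $Q$ (since $w_Q$ vanishes there), and $\|\nabla \tilde u_Q\|_{\L^2} \lesssim |Q|^{1/2}$, $\|\tilde u_Q\|_{\L^2} \lesssim \ell(Q) |Q|^{1/2}$. For a small parameter $\delta > 0$ to be fixed later I set $f_Q \coloneqq (1 + i\delta^2 \ell(Q)^2 A)^{-1}\tilde u_Q \in \dom(A)$. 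Rewriting $\tilde u_Q - f_Q = -i\delta^2\ell(Q)^2 (1 + i\delta^2\ell(Q)^2 \cA)^{-1} \cP \divergence(\mu \nabla \tilde u_Q)$ and applying Proposition~\ref{Prop: L2 resolvent bounds} (both the $\L^2$- and the $\cP\divergence$-form) yield
\begin{align*}
 \|\tilde u_Q - f_Q\|_{\L^2} \lesssim \delta \ell(Q) |Q|^{1/2}, \qquad \|\nabla f_Q\|_{\L^2} \lesssim |Q|^{1/2},
\end{align*}
the latter \emph{uniformly} in $\delta$.

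\textbf{Stopping time, sawtooth, and conclusion.} Following~\cite[Sec.~14]{ISEM}, I select the maximal pairwise disjoint dyadic subcubes $R \subseteq Q$ with either $\fint_R |\nabla f_Q|^2 \, \d x > M$ or $|(\nabla f_Q)_R - \nu_0|^2 > \sigma$. The first condition is handled by the $\L^2$ bound on $\nabla f_Q$. For the second, the identity $\nabla \tilde u_Q \equiv \nu_0$ on $Q$ reduces the control to $|(\nabla(f_Q - \tilde u_Q))_R|$, which the interpolation Lemma~\ref{lem:palmen} combined with the $\delta$-smallness of $\tilde u_Q - f_Q$ bounds by an $O(\delta)$ quantity after summation. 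For $\eps, \delta$ small enough and $M$ large, the bad cubes then cover at most $(1 - \eta)|Q|$ for some $\eta > 0$. On the complementary sawtooth $E^*(Q) \subseteq R(Q)$, $\DyadicMax_t \nabla f_Q$ is bounded and pairs with $\nu_0$ from below, hence
\begin{align*}
 \iint_{E^*(Q)} |\gamma_t(x) \cdot \nu_0|^2 \, \frac{\d x \, \d t}{t}
 \lesssim \iint_{R(Q)} |\gamma_t \cdot \DyadicMax_t \nabla f_Q|^2 \, \frac{\d x \, \d t}{t}.
\end{align*}
The principal part approximation (Proposition~\ref{prop:principal part approximation}) bounds the right-hand side by $\|\nabla f_Q\|_{\L^2}^2$ plus $\iint_{R(Q)} |\Theta_t \nabla f_Q|^2 \, \d x \, \d t / t$; and from~\eqref{eq:introducing theta} together with $A f_Q = (i\delta^2 \ell(Q)^2)^{-1}(\tilde u_Q - f_Q)$ and the resolvent bound $\|(1+t^2 A)^{-1}\|_{\L^2 \to \L^2} \lesssim 1$, a direct computation gives
\begin{align*}
 \int_0^{\ell(Q)} \|\Theta_t \nabla f_Q\|_{\L^2}^2 \, \frac{\d t}{t}
 \lesssim \frac{\|\tilde u_Q - f_Q\|_{\L^2}^2}{\delta^4 \ell(Q)^2} \lesssim \frac{|Q|}{\delta^2}.
\end{align*}
A standard self-improvement argument on a truncated Carleson norm then promotes the sawtooth estimate to the desired directional Carleson bound on all of $R(Q)$. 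One technical caveat throughout is that the off-diagonal decay in Corollary~\ref{Cor: theta OD} is only polynomial, so each summation over distant dyadic annuli must be carried out with a decay exponent strictly larger than $d$, exactly as in Proposition~\ref{prop: operator on Linfty}.
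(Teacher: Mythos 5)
Your proposal follows essentially the same route as the paper: the same Bogovski\u{\i}-corrected affine profile on the annulus $2Q\setminus\overline{Q}$, the same mollification by a resolvent of $A$ at scale $\eps\ell(Q)$ so that the test field is the gradient of an $\H^1_\sigma$-function and Proposition~\ref{prop:principal part approximation} applies, the same computation of $\int_0^{\ell}\|\Theta_t \nabla f_Q\|_{\L^2}^2\,\frac{\d t}{t}\lesssim |Q|/\eps^2$ via $\Theta_t\nabla f_Q = t(1+t^2A)^{-1}Af_Q$, and the same stopping-time plus John--Nirenberg conclusion (the paper packages these as Lemma~\ref{lem:stopping time} and the subsequent lemma). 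One imprecision to fix: in your opening reduction you assert that it suffices to prove $\iint_{R(Q)}|\gamma_t(x)\cdot\nu_0|^2\,\frac{\d x\,\d t}{t}\lesssim|Q|$ \emph{without} the indicator $\ind_{\Gamma_\xi^\eps}$. That unrestricted single-direction bound is not provable by this method and is not what your argument delivers: in the sawtooth step the absorption of the error term $|\gamma_t(x)\cdot(\nu_0-(\DyadicMax_t\nabla f_Q)(x))|\leq\sigma^{1/2}|\gamma_t(x)\cdot\P|_{\cV}$ into $|\gamma_t(x)\cdot\nu_0|$ requires precisely the cone condition $|\gamma_t(x)\cdot\nu_0|\geq(1-\eps)|\gamma_t(x)\cdot\P|_{\cV}$, i.e.\ the indicator must be carried through the entire argument; without it the lower bound ``pairs with $\nu_0$ from below'' fails wherever $\gamma_t(x)\cdot\P$ is nearly annihilated by $\nu_0$. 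Since the rest of your argument only uses and only establishes the cone-restricted estimate, this is a correctable slip in the reduction rather than a missing idea, but as literally written the intermediate claim is too strong.
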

Taking the directional Carleson measure estimate from Proposition~\ref{prop:key estimate Kato} for granted, we are ready to prove Theorem~\ref{Thm: Kato}.

\begin{proof}[Proof of Theorem~\ref{Thm: Kato}]
Since $\cV \subseteq \cL(\IC^{d \times d} , \IC^d)$ is closed, by Lemma~\ref{Lem: V closed subspace}, its unit sphere is compact in the subspace topology. Thus, we can cover $\cV \setminus \{ 0 \}$ by finitely many cones $\Gamma_{\xi_1}^\eps, \ldots, \Gamma_{\xi_N}^\eps$, where $N$ depends only on $d$ and $\eps$. The estimate from Proposition~\ref{prop:key estimate Kato} yields
\begin{align*}
    \iint_{R(Q)} |\gamma_t(x) \cdot \P|_{\cV}^2 \, \frac{\d x \, \d t}{t}
    \leq \iint_{R(Q)} \sum_{j=1}^N |\gamma_t(x) \cdot \P |_{\cV}^2 \, \ind_{\Gamma_{\xi_j}^\eps}(\gamma_t(x) \cdot \P) \, \frac{\d x \, \d t}{t}
    \leq CN |Q| 
\end{align*}
for every cube $Q \in \cube$ as required.
\end{proof} 

The remainder of this section is devoted to the proof or the directional Carleson measure estimate which bases on the construction of a $T(b)$-type test function. More precisely, for given $Q \in \cube$ and $\xi \in \cV$ with $\lvert \xi \rvert_{\cV} = 1$, we strive for a construction of functions $b_Q^{\eps}$ such that
\begin{align*}
 \lvert w \rvert_{\cV} \lesssim \big\lvert w \big( (\DyadicMax_t b_Q^{\eps})(x) \big) \big\rvert \qquad (w \in \Gamma_{\xi}^{\eps})
\end{align*}
for a hopefully large set of $(x , t) \in R(Q)$. The test function $b_Q^{\eps}$ should have two properties:
\begin{enumerate}
 \item It should be a gradient of an $\H^1_{\sigma} (\IR^d)$-function in order to make the principle part approximation accessible.
 \item It should point in the direction of the unit matrix $\overline{\xi} \in \IC^{d \times d}$ that realizes the norm of $\xi$.
\end{enumerate}
The first point will be achieved by making an ansatz for $b_Q^{\eps}$ as the gradient of a suitable resolvent of $A$, i.e.,
\begin{align*}
 b_Q^{\eps} \simeq \nabla (1 + \eps^2 \ell (Q)^2 A)^{-1} \Phi.
\end{align*}
Since $(1 + \eps^2 \ell (Q)^2 A)^{-1}$ is an approximation of the identity if $\eps$ is sufficiently small, the second point roughly holds if $\Phi$ is chosen such that $\nabla \Phi = \overline{\xi}$, i.e., $\Phi(x) \coloneqq \overline{\xi}^{\top} (x - x_Q)$, where $x_Q$ is the center of $Q$. A problem is, that $\Phi$ is neither $\L^2$-integrable nor divergence-free. It will suffice to modify $\Phi$ by a multiplication with a bump function to achieve the $\L^2$-integrability. The divergence-freeness should be achieved by a Bogovski\u{\i} correction. However, this only works if $x \mapsto \overline{\xi}^{\top} (x - x_Q)$ is itself divergence-free which is only the case if the trace of the matrix $\overline{\xi}^{\top}$ vanishes. We will show, that there will be now harm if we replace $\overline{\xi}^{\top}$ by $(\P \overline{\xi})^{\top}$. \par
From now on, the map $\xi \in \cV$ with $\lvert \xi \rvert_{\cV} = 1$ is fixed and we write $\Gamma^\eps \coloneqq \Gamma_\xi^\eps$. Let $\overline{\xi} \in \IC^{d \times d}$ be such that
\begin{align*}
 \lvert \overline{\xi} \rvert_{\IC^{d \times d}} = 1 \quad \text{and} \quad \lvert \xi \overline{\xi} \rvert = \lvert \xi \rvert_{\cV}.
\end{align*}
We will fix this choice of $\bar \xi$ in the following. Before we present the construction of $b_Q^{\eps}$ in detail, let us introduce our final tool, the so-called Bogovski\u{\i} operator.

Fix a cube $Q_0 \coloneqq (-1 , 1)^d$ and define $\cC \coloneqq 2 Q_0 \setminus \overline{Q_0}$. The Bogovski\u{\i} operator $\cB_{\cC} \colon \L^2_0 (\cC) \to \H^1_0 (\cC ; \IC^d)$ denotes the solution operator to the divergence equation for functions $f \in \L^2_0 (\cC)$
\begin{align*}
\left\{ \begin{aligned}
 \divergence(u) &= f && \text{in } \cC, \\
 u &= 0 && \text{on } \partial \cC.
\end{aligned} \right.
\end{align*}
Here, $\L^2_0$ denotes the subspace of $\L^2$ of average-free functions on $\cC$. The operator $\cB_{\cC}$ can be constructed as a bounded operator from $\L^2_0 (\cC)$ onto $\H^1_0 (\cC ; \IC^d)$, see, e.g.,~\cite[Sec.~III.3]{Galdi}.
Thus, by construction, $\cB_{\cC}$ satisfies
\begin{align*}
\divergence(\cB_{\cC} f) = f \quad \text{and} \quad \| \cB_{\cC} f \|_{\H^1 (\cC)} \leq C_{Bog} \| f \|_{\L^2 (\cC)} \qquad (f \in \L^2_0 (\cC))
\end{align*}
for some constant depending only on $d$. By rescaling, one can construct a Bogovski\u{\i} operator on $\alpha \cC$ for $\alpha > 0$ as follows: If $f \in \L^2_0 (\alpha \cC)$, then $f_{\alpha} (x) := \alpha f (\alpha x)$ lies in $\L^2_0 (\cC)$. Define
\begin{align*}
 [\cB_{\alpha \cC} f] (x) := [\cB_{\cC} f_{\alpha}] (\alpha^{-1} x) \qquad (f \in \L^2_0 (\alpha \cC) , \, x \in \alpha \cC).
\end{align*}
Clearly, $\cB_{\alpha \cC}$ is bounded from $\L^2_0 (\alpha \cC)$ onto $\H^1_0 (\alpha \cC ; \IC^d)$ and satisfies $\divergence (\cB_{\alpha \cC} f) = f$. Furthermore, we have
\begin{align}
\label{Eq: Homogeneous estimate Bogovskii}
 \| \nabla \cB_{\alpha \cC} f \|_{\L^2 (\alpha \cC)} \leq C_{Bog} \| f \|_{\L^2 (\alpha \cC)} \qquad (f \in \L^2_0(\alpha \cC))
\end{align}
with the same constant $C_{Bog} > 0$ as above. Finally, by translation we might define Bogovski\u{\i} operators for translated annuli as well and these will again satisfy~\eqref{Eq: Homogeneous estimate Bogovskii} with the same constant. \par
Now, we are prepared for the construction of the $T(b)$-type test function.

\begin{proposition}
\label{prop:the Tb test functions}
There exists $\eps_0 \in (0,1]$ such that for all $0 < \eps \leq \eps_0$, all $\xi \in \cV$ with $\lvert \xi \rvert_{\cV} = 1$ and all cubes $Q \in \cube$ there exists $b_Q^\eps \in \L^2 (\IR^d ; \IC^{d \times d})$ with the following properties:
\begin{enumerate}
    \item \label{item1:the Tb test function} $\|b_Q^\eps\|_{\L^2} \lesssim |Q|^{1/2}$,
    \item \label{item2:the Tb test function} $\displaystyle  \Big\lvert \xi \fint_{Q} b_Q^\eps \, \d x \Big\rvert \geq 1$,
    \item \label{item3:the Tb test function} $\displaystyle \iint_{R(Q)} |\gamma_t(x) \cdot \P (\DyadicMax_t b_Q^\eps)(x)|^2 \, \frac{\d x \, \d t}{t} \lesssim \frac{|Q|}{\eps^2} $.
\end{enumerate}
\end{proposition}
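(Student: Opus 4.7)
The plan is to take $b_Q^{\eps} := c_0 \nabla u_Q^{\eps}$, where $c_0$ is an absolute constant to be chosen and $u_Q^{\eps} := (1 + t_Q^2 A)^{-1} f$ with $t_Q := \eps \ell(Q)$; here $f \in \H^1_{\sigma}(\IR^d)$ is a suitably modified affine vector field that points in the direction of $\overline{\xi}$ on $Q$. Concretely, set $M := \P \overline{\xi}$, which is trace-free, and $\Phi_0(x) := M^{\top}(x - x_Q)$, so that $\nabla \Phi_0 \equiv M$ and $\divergence(\Phi_0) = \tr(M) = 0$. Pick a smooth cutoff $\eta \in \C_c^{\infty}(2Q)$ with $\eta \equiv 1$ on $Q$ and $\|\nabla \eta\|_{\infty} \lesssim \ell(Q)^{-1}$. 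Then $\divergence(\eta \Phi_0) = \nabla \eta \cdot \Phi_0$ is supported in the annulus $\cC_Q := 2Q \setminus \overline{Q}$ and has mean zero there (since $\eta \Phi_0$ is compactly supported and $\Phi_0$ is divergence-free). The translated and rescaled Bogovski\u{\i} operator on $\cC_Q$ thus produces $\psi \in \H^1_0(\cC_Q; \IC^d)$ with $\divergence(\psi) = \nabla \eta \cdot \Phi_0$ and $\|\nabla \psi\|_{\L^2} \lesssim |Q|^{1/2}$. Extending $\psi$ by zero, $f := \eta \Phi_0 - \psi$ belongs to $\H^1_{\sigma}(\IR^d)$, agrees with $\Phi_0$ on $Q$, and satisfies $\|\nabla f\|_{\L^2} \lesssim |Q|^{1/2}$ and $\|f\|_{\L^2} \lesssim \ell(Q) |Q|^{1/2}$. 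Replacing $\overline{\xi}$ by $\P \overline{\xi}$ is harmless because $\xi \in \cV$ annihilates $\Id_{d \times d}$, so $\xi M = \xi \overline{\xi}$.

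For (1) and (2) I would use the identity $u_Q^{\eps} - f = t_Q^2 (1 + t_Q^2 \cA)^{-1} \cP \divergence(\mu \nabla f)$, which follows from Lemma~\ref{Lem: Product structure}, together with the resolvent bounds of Proposition~\ref{Prop: L2 resolvent bounds} at $\lambda = t_Q^{-2}$. These yield $\|\nabla(u_Q^{\eps} - f)\|_{\L^2} \lesssim \|\nabla f\|_{\L^2} \lesssim |Q|^{1/2}$, giving (1), and $\|u_Q^{\eps} - f\|_{\L^2} \lesssim t_Q \|\nabla f\|_{\L^2} \lesssim \eps \ell(Q) |Q|^{1/2}$. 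For (2), $\nabla f \equiv M$ on $Q$ gives $\xi \fint_Q \nabla f \, \d x = \xi M = \xi \overline{\xi}$ of modulus $|\xi|_{\cV} = 1$, and inserting the two estimates above into the interpolation inequality of Lemma~\ref{lem:palmen} applied to $v := u_Q^{\eps} - f$ yields $\bigl|\fint_Q \nabla v \, \d x\bigr|_{\IC^{d \times d}}^2 \lesssim \ell(Q)^{-1} \cdot \eps \ell(Q) \cdot 1 = \eps$. Hence $\bigl|\xi \fint_Q \nabla u_Q^{\eps} \, \d x\bigr| \geq 1 - C \eps^{1/2}$, so choosing $\eps_0$ with $C \eps_0^{1/2} \leq 1/2$ and $c_0 = 2$ yields (2).

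For (3), since $b_Q^{\eps}$ is the gradient of a divergence-free vector field, $\DyadicMax_t b_Q^{\eps}$ is trace-free and the projection $\P$ may be dropped inside the integrand. The principal part approximation of Proposition~\ref{prop:principal part approximation} then bounds the left-hand side of (3) by $C \bigl(|Q| + \int_0^{\ell(Q)} \|\Theta_t \nabla u_Q^{\eps}\|_{\L^2}^2 \, \d t / t \bigr)$. To control the remaining integral I would combine two complementary bounds on $\Theta_t \nabla u_Q^{\eps} = t A (1 + t^2 A)^{-1} u_Q^{\eps}$: the $\L^2$-boundedness of $\Theta_t$ (via Proposition~\ref{Prop: L2 resolvent bounds}) gives $\|\Theta_t \nabla u_Q^{\eps}\|_{\L^2} \lesssim |Q|^{1/2}$, while factoring $\Theta_t \nabla u_Q^{\eps} = t(1 + t^2 A)^{-1} \cA u_Q^{\eps}$ and using Lemma~\ref{Lem: Product structure} with Proposition~\ref{Prop: L2 resolvent bounds} applied to $\cA u_Q^{\eps} = -(1 + t_Q^2 \cA)^{-1} \cP \divergence(\mu \nabla f)$ produces $\|\cA u_Q^{\eps}\|_{\L^2} \lesssim t_Q^{-1} \|\nabla f\|_{\L^2}$, hence the complementary bound $\|\Theta_t \nabla u_Q^{\eps}\|_{\L^2} \lesssim (t / t_Q) |Q|^{1/2}$. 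Taking the minimum and integrating yields $|Q|(1 + \log(1/\eps)) \lesssim |Q|/\eps^2$, which is (3).

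The main obstacle is balancing three competing requirements on $b_Q^{\eps}$: it has to be a gradient of a divergence-free $\H^1$-vector field so that the principal part approximation of Section~\ref{Sec: Principle part approximation} applies in (3), it has to be $\L^2$-localised for (1) and for the mean-value computation in (2), and its $Q$-average has to be close to the prescribed trace-free matrix $M$ for (2). The Bogovski\u{\i} correction is what reconciles localisation with divergence-freeness, while the preliminary projection $\overline{\xi} \mapsto \P \overline{\xi}$ is forced on us by the fact that the affine field $x \mapsto \overline{\xi}^{\top}(x - x_Q)$ is divergence-free only after its trace has been removed; it costs nothing in (2) because $\xi$ pairs identically with $\overline{\xi}$ and $\P \overline{\xi}$. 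The tilt introduced by the resolvent $(1 + t_Q^2 A)^{-1}$ is then absorbed by the Poincar\'e-type interpolation Lemma~\ref{lem:palmen}.
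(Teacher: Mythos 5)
Your construction and argument coincide with the paper's proof in all essentials: the same cutoff-plus-Bogovski\u{\i} correction $\Phi$, the same test function $b_Q^\eps = 2\nabla(1+\eps^2\ell(Q)^2A)^{-1}\Phi$, the same resolvent identity and Lemma~\ref{lem:palmen} for item~(2), and the same reduction of item~(3) via Proposition~\ref{prop:principal part approximation}. The only (harmless) deviation is in (3), where you interpolate two bounds on $\|\Theta_t \nabla u_Q^\eps\|_{\L^2}$ to obtain the sharper $|Q|(1+\log(1/\eps))$, whereas the paper uses the single bound $(t/t_Q)|Q|^{1/2}$ on all of $(0,\ell(Q)]$ to get $|Q|/\eps^2$ directly.
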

\begin{proof}
We fix $Q \in \cube$ and abbreviate $\ell \coloneqq \ell(Q)$. Similarly, we simplify notation by omitting $\eps$ and $Q$ when constructing the test function $b = b_Q^\eps$.

We start by fixing $\eta \in \C_c^\infty(2Q)$ such that 
\begin{align}
\label{Eq: Properties of cut-off}
\eta = 1 \quad \text{in} \quad Q \quad \text{and} \quad \|\eta\|_{\L^\infty} + \ell \|\nabla \eta\|_{\L^\infty} \leq C. 
\end{align}
With $x_Q$ the center of $Q$, we construct a smooth function with compact support, whose gradient is equal to $\overline{\xi}$ on $Q$ as follows. First, note that
\begin{align*}
 \divergence \big( (\P \overline{\xi})^{\top} (x - x_Q) \big) = 0.
\end{align*}
Let $\cB$ be the Bogovski\u{\i} operator in $2 Q^{\circ} \setminus \overline{Q}$ and define
\begin{align}
\label{eq1:the Tb test function}
 \Phi(x) \coloneqq \eta (x) (\P \overline{\xi})^{\top} (x - x_Q) - \cB \big( \nabla \eta \cdot (\P \overline{\xi})^{\top} (\bigdot - x_Q) \big) (x),
\end{align}
where we regard $\cB \big( \nabla \eta \cdot (\P \overline{\xi})^{\top} (\bigdot - x_Q) \big)$ to be extended by zero outside of $2 Q^{\circ} \setminus \overline{Q}$.
Finally, we define the desired test function as
\begin{align}
\label{eq2:the Tb test function}
 b \coloneqq 2 \nabla (1+\eps^2 \ell^2 A)^{-1} \Phi.
\end{align}
Then, we have
\begin{align}
\label{eq3:the Tb test function}
\begin{aligned}
    \frac{1}{2} b- \nabla \Phi
    &= \nabla \bigl((1+\eps^2 \ell^2 A)^{-1} - 1 \bigr) \Phi \\
    &= \nabla \bigl(-(1+\eps^2 \ell^2 \cA)^{-1}\eps^2 \ell^2 \cA \bigr) \Phi \\
    &= \eps^2 \ell^2 \nabla (1+\eps^2 \ell^2 \cA)^{-1} \cP \divergence (\mu \nabla \Phi).
\end{aligned}
\end{align}
Let us prove that we can pick $\eps$ small enough such that $b$ has the stated properties. \par
In order to prove~\eqref{item1:the Tb test function}, we begin by calculating
    \begin{align*}
        \big\lvert\nabla \Phi (x) \big\rvert_{\IC^{d \times d}} 
        &\leq \big\lvert \nabla \eta(x) \otimes(\P \overline{\xi})^{\top} (x-x_Q) \big\rvert_{\IC^{d \times d}} + \big\lvert \eta(x) \P \overline{\xi} \big\rvert_{\IC^{d \times d}} \\
        &\qquad + \big\lvert \nabla \cB \big( \nabla \eta \cdot (\P \overline{\xi})^{\top} (\bigdot - x_Q) \big) (x) \big\rvert_{\IC^{d \times d}}.
    \end{align*}
    Taking $\L^2$-norms, we obtain by~\eqref{Eq: Homogeneous estimate Bogovskii} and~\eqref{Eq: Properties of cut-off} together with the boundedness of $\P$ and the property $\lvert \overline{\xi} \rvert_{\IC^{d \times d}} = 1$
    \begin{align}
    \label{eq4:the Tb test function}
        \|\nabla \Phi\|_{\L^2}^2 \lesssim \frac{\lvert \P \overline{\xi} \rvert_{\IC^{d \times d}}}{\ell^2} \int_{2 Q} \lvert x - x_Q \rvert^2 \, \d x + \lvert Q \rvert \lvert \P \overline{\xi} \rvert_{\IC^{d \times d}} \lesssim \lvert Q \rvert \lvert \P \overline{\xi} \rvert_{\IC^{d \times d}} \lesssim \lvert Q \rvert.
    \end{align}
    Combining~\eqref{eq3:the Tb test function} and Proposition~\ref{Prop: L2 resolvent bounds}, we find
    \begin{align*}
        \| b- 2\nabla \Phi\|_{\L^2}^2 
        \lesssim \| \mu \nabla \Phi\|_{\L^2}^2
        \lesssim |Q|
    \end{align*}
    and together with~\eqref{eq4:the Tb test function} we arrive at~\eqref{item1:the Tb test function}. \par
    We turn to the proof of statement~\eqref{item2:the Tb test function}. As $\nabla \Phi = \P \overline{\xi}$ on $Q$, we can write
    \begin{align}
    \label{eq5:the Tb test function}
        \fint_Q \big( b - 2 \P \overline{\xi} \big) \, \d x
        = \fint_Q \big( b - 2\nabla \Phi \big) \, \d x
        = 2 \eps^2 \ell^2 \fint_Q \nabla u \, \d x,
    \end{align}
    where in the second step we have used~\eqref{eq3:the Tb test function} and $u \coloneqq (1+\eps^2 \ell^2 \cA)^{-1} \cP \divergence(\mu \nabla \Phi)$. The uniform $\L^2$-bounds in Proposition~\ref{Prop: L2 resolvent bounds} in combination with~\eqref{eq4:the Tb test function} yield
    \begin{align*}
        \|u\|_{\L^2} \leq \frac{C}{\eps \ell} |Q|^{1/2} \quad \text{and} \quad
        \|\nabla u\|_{\L^2} \leq \frac{C}{\eps^2 \ell^2} |Q|^{1/2},
    \end{align*}
    so that estimating the average on the right-hand side of~\eqref{eq5:the Tb test function} by means of Lemma~\ref{lem:palmen} leads us to
    \begin{align*}
        \biggl|\fint_Q \big( b - 2 \P \overline{\xi} \big) \, \d x \biggr|_{\IC^{d \times d}}^2
        \leq \frac{C \eps^4 \ell^4}{\ell |Q|} \|u\|_{\L^2} \|\nabla u\|_{\L^2}
        \leq C\eps,
    \end{align*}
    where $C$ varies from step to step. Since $\xi \in \cV$ and $\P$ is a projection, we have $\xi \overline{\xi} = \xi \P \overline{\xi}$ so that
    \begin{align*}
        \xi \fint_Q b \, \d x = 2 \xi \overline{\xi} + \xi \fint_Q \big( b - 2 \P \overline{\xi} \big) \, \d x.
    \end{align*}
    Recall that $\overline{\xi}$ was chosen in such a way, that $\lvert \xi \overline{\xi} \rvert = \lvert \xi \rvert_{\cV} = 1$ which yields
    \begin{align*}
        \bigg\lvert \xi \fint_Q b \, \d x\bigg\rvert 
        \geq 2 - \lvert \xi \rvert_{\cV} \bigg\lvert \fint_Q \big( b - 2 \P \overline{\xi} \big) \, \d x \bigg\rvert_{\IC^{d \times d}} \geq 2 - \sqrt{ C \eps}.
    \end{align*}
    Now,~\eqref{item2:the Tb test function} follows by taking $\eps \leq C^{-1}$. \par
     For the proof of~\eqref{item3:the Tb test function}, we start with the principal part approximation from Proposition~\ref{prop:principal part approximation} and then use~\eqref{item1:the Tb test function} to bound
    \begin{align}
    \label{eq6:the Tb test functio}
    \begin{split}
    \frac{1}{2} \iint_{R(Q)} |\gamma_t(x) \cdot \P (\DyadicMax_t b)(x)|^2 \, \frac{\d x \, \d t}{t} &\leq \int_0^{\ell} \Big( \|(\gamma_t \cdot \DyadicMax_t - \Theta_t) b\|_{\L^2}^2  + \|\Theta_t b\|_{\L^2}^2 \Big) \, \frac{\d t}{t} \\
    &\lesssim \|b\|_{\L^2}^2  + \int_0^{\ell} \|\Theta_t b\|_{\L^2}^2 \, \frac{\d t}{t} \\
    &\lesssim |Q|  +  \int_0^{\ell} \|\Theta_t b\|_{\L^2}^2 \, \frac{\d t}{t}.
    \end{split}
    \end{align}
    Recall from~\eqref{eq2:the Tb test function} that $b$ is by definition the gradient of a function in $\H^1_{\sigma}(\IR^d)$ and so that applying Proposition~\ref{prop:principal part approximation} was allowed. For the last term, we compute $\Theta_t b$ as
    \begin{align*}
    \Theta_t b 
    &= -2t(1+t^2 \cA)^{-1} \cP \divergence \bigl(\mu \nabla (1+\eps^2 \ell^2 A)^{-1} \Phi\bigr) \\
    &= 2t(1+t^2 \cA)^{-1}  \cA (1+\eps^2 \ell^2 \cA)^{-1} \Phi \\
    &= -t \bigl((1+t^2 A)^{-1}\bigr) \bigl((1+\eps^2 \ell^2 \cA)^{-1} \cP \divergence \bigr) 2\mu \nabla \Phi.
    \end{align*}
    We use Proposition~\ref{Prop: L2 resolvent bounds} once again and~\eqref{eq4:the Tb test function} in order to control
    \begin{align*}
        \|\Theta_t b\|_{\L^2}^2
        \lesssim \frac{t^2}{\eps^2 \ell^2} \|2 \mu \nabla \Phi\|_{\L^2}^2
        \lesssim \frac{t^2}{\eps^2 \ell^2} |Q|.
    \end{align*}
    Finally, integration in $t$ yields
    \begin{align*}
        \int_0^{\ell} \|\Theta_t b\|_{\L^2}^2 \, \frac{\d t}{t}
        \lesssim \frac{1}{\eps^2} |Q|,
    \end{align*}
    which we use back in~\eqref{eq6:the Tb test functio} to conclude provided we take $\eps \leq 1$.
\end{proof}

Having a $T(b)$-type test function at our disposal, the Carleson measure property will follow by a combination of a stopping time argument with a John--Nirenberg lemma for Carleson measures. Concerning the first, we refer to~\cite[Lem.~5.11]{AKM} or~\cite[Lem.~14.8]{ISEM} for a proof.

\begin{lemma}
\label{lem:stopping time}
Let $\eps_0 \in (0 , 1]$ be the parameter from Proposition~\ref{prop:the Tb test functions}. There exists $0 < \eps \leq \eps_0$, depending only on $d$, $\mu_{\bullet}$ and $\mu^{\bullet}$, such that for each dyadic cube $Q \in \cube$ there exists a collection of pairwise disjoint dyadic subcubes $(Q_j)_j$ of $Q$ for which the sets
\begin{align}
\label{eq:dyadic sawwtooth}
    E(Q) \coloneqq Q \setminus \bigcup_{j} Q_{j} \quad \text{and} \quad E^*(Q) \coloneqq R(Q) \setminus \bigcup_{j} R(Q_{j})
\end{align}
have the following properties:
\begin{enumerate}
    \item \label{item1:stopping time} $\bigl|E(Q) \bigr| \geq \eta |Q|$, for some $\eta>0$ depending only on $d$, $\mu_{\bullet}$ and $\mu^{\bullet}$,
    \item \label{item2:stopping time} $\displaystyle \bigl|w (\DyadicMax_t b_Q^\eps)(x)\bigr| \geq \frac{1}{2} |w|$, whenever $(x,t) \in E^*(Q)$ and $w \in \Gamma^\eps$.
\end{enumerate}
\end{lemma}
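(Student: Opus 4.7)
The plan is a standard Calder\'{o}n--Zygmund stopping time argument applied to the test function $b = b_Q^\eps$ from Proposition~\ref{prop:the Tb test functions}. I would fix $\sigma = 3/4$ and a (large) constant $K > 0$ to be chosen, and let $\{Q_j\}_j$ be the family of \emph{maximal} dyadic subcubes of $Q$ for which
\begin{align*}
 \text{(A)}\ \bigg|\xi \fint_{Q_j} b \, \d x \bigg| < \sigma \qquad \text{or} \qquad \text{(B)}\ \fint_{Q_j} |b|_{\IC^{d \times d}}^2 \, \d x > K
\end{align*}
holds. The $Q_j$ are pairwise disjoint. By maximality, every dyadic $Q'$ with $Q_j \subsetneq Q' \subseteq Q$ is \emph{good}, i.e., $|\xi (b)_{Q'}| \geq \sigma$ and $(|b|_{\IC^{d \times d}}^2)_{Q'} \leq K$; the cube $Q$ itself is good since $|\xi (b)_Q| \geq 1 > \sigma$ by~\eqref{item2:the Tb test function} and $(|b|_{\IC^{d \times d}}^2)_Q \leq |Q|^{-1} \|b\|_{\L^2}^2 \leq C$ by~\eqref{item1:the Tb test function}, provided $K \geq C$.

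For~\eqref{item1:stopping time}, I would start from~\eqref{item2:the Tb test function} in the form $|Q| \leq |\int_Q \xi b \, \d x|$ and split the integral over $E(Q)$ and the $Q_j$'s, separating the latter into those satisfying (A) (type A) and those satisfying (B) but not (A) (type B). The type-A cubes contribute at most $\sigma \sum_{\text{A}} |Q_j| \leq \sigma |Q|$. For type B, Cauchy--Schwarz together with $\sum_{\text{B}} |Q_j| \leq K^{-1} \|b\|_{\L^2}^2 \leq C K^{-1} |Q|$ (using~\eqref{item1:the Tb test function}) gives
\begin{align*}
 \sum_{\text{B}} \bigg| \xi \int_{Q_j} b \, \d x \bigg| \leq \bigg(\sum_{\text{B}} |Q_j|\bigg)^{\!1/2} \bigg(\sum_{\text{B}} \|b\|_{\L^2(Q_j)}^2 \bigg)^{\!1/2} \leq C K^{-1/2} |Q|.
\end{align*}
The $E(Q)$-term is controlled by $|E(Q)|^{1/2} \|b\|_{\L^2} \leq C |E(Q)|^{1/2} |Q|^{1/2}$. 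Choosing $K$ so large that $C K^{-1/2} \leq 1/8$, rearrangement yields $|E(Q)| \geq \eta |Q|$ for some $\eta > 0$ depending only on $d$, $\mu_\bullet$, $\mu^\bullet$.

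For~\eqref{item2:stopping time}, the key observation is that for every $(x,t) \in E^*(Q)$ the unique dyadic cube $Q' \in \cube_t$ containing $x$ is good. Indeed, since $\ell(Q') \geq t$ and $t \leq \ell(Q)$, dyadicity forces $Q' \subseteq Q$. If $x \notin \bigcup_j Q_j$, then $Q'$ is neither any $Q_j$ nor a proper subcube of one, so it is not bad; if instead $x \in Q_{j_0}$ for some (necessarily unique) $j_0$, the condition $(x,t) \notin R(Q_{j_0})$ gives $t > \ell(Q_{j_0})$, hence $Q' \supsetneq Q_{j_0}$, and maximality of $Q_{j_0}$ again prevents $Q'$ from being bad. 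Therefore $|\xi (\DyadicMax_t b)(x)| \geq \sigma$ and $|(\DyadicMax_t b)(x)|_{\IC^{d \times d}} \leq K^{1/2}$. For $w \in \Gamma^\eps$, writing $w = |w|_{\cV}(\xi + \tau)$ with $|\tau|_{\cV} < \eps$, the triangle inequality gives
\begin{align*}
 |w(\DyadicMax_t b)(x)| \geq |w|_{\cV} \bigl( |\xi (\DyadicMax_t b)(x)| - \eps |(\DyadicMax_t b)(x)|_{\IC^{d \times d}} \bigr) \geq |w|_{\cV} \bigl(\sigma - \eps K^{1/2}\bigr),
\end{align*}
which is at least $|w|_{\cV}/2$ whenever $\eps \leq (\sigma - 1/2)/K^{1/2}$.

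The only real subtlety is the coordination of parameters: fix $\sigma = 3/4$ first, then take $K$ large enough to make the lower bound on $|E(Q)|$ strictly positive, and finally select $\eps \leq \min\{\eps_0, (\sigma - 1/2)/K^{1/2}\}$ so that both the cone opening and the existence regime of the test functions from Proposition~\ref{prop:the Tb test functions} are simultaneously respected. All structural work has already been performed by that proposition, so this becomes a matter of ordering the choice of constants.
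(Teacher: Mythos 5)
Your proof is correct and is precisely the standard Calder\'on--Zygmund stopping-time argument that the paper does not reproduce but delegates to \cite[Lem.~5.11]{AKM} and \cite[Lem.~14.8]{ISEM}: maximal bad cubes selected by the two conditions on $\xi(b)_{Q'}$ and $(|b|^2)_{Q'}$, the ampleness of $E(Q)$ from property~(2) of Proposition~\ref{prop:the Tb test functions} combined with Cauchy--Schwarz, and the cone estimate on $E^*(Q)$ from goodness of the cube in $\cube_t$ containing $x$. The ordering of the constants ($\sigma$, then $K$ uniform in $\eps$ via property~(1), then $\eps$) is handled correctly, so nothing is missing.
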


Lemma~\ref{lem:stopping time}~\eqref{item2:stopping time} guarantees that the Borel measure $\nu$ on $\IR^{d + 1}_+$ defined on Borel sets $E \subseteq \IR^{d + 1}_+$ by
\begin{align*}
 \nu (E) \coloneqq \iint_E |\gamma_t(x) \cdot \P |_{\cV}^2 \, \ind_{\Gamma_\xi^\eps}(\gamma_t(x) \cdot \P) \, \frac{\d x \, \d t}{t}
\end{align*}
satisfies for cubes $Q \in \cube$
\begin{align*}
 \nu (E^* (Q)) = \iint_{E^* (Q)} |\gamma_t(x) \cdot \P |_{\cV}^2 \, \ind_{\Gamma_\xi^\eps}(\gamma_t(x)\cdot \P) \, \frac{\d x \, \d t}{t} \leq 4 \iint_{E^*(Q)} |\gamma_t(x) \cdot \P (\DyadicMax_t b_Q^\eps)(x)|^2 \, \frac{\d x \, \d t}{t}.
\end{align*}
The right-hand side can be controlled by virtue of Proposition~\ref{prop:the Tb test functions}~\eqref{item3:the Tb test function} leading to the estimate
\begin{align}
\label{Eq: Almost Carleson bound}
 \nu (E^*(Q)) \leq \frac{4 C}{\eps^2} \lvert Q \rvert \qquad (Q \in \cube),
\end{align}
where $C$ is the constant from the very proposition. The Carleson measure property of $\nu$ and thus the statement of Proposition~\ref{prop:key estimate Kato} finally follows by a combination of~\eqref{Eq: Almost Carleson bound} with Lemma~\ref{lem:stopping time}~\eqref{item1:stopping time} and the following John--Nirenberg lemma for Carleson measures. We refer to~\cite[Lem.~14.10]{ISEM} for its proof. The argument is nowadays standard in the verification of the square root property of elliptic operators. 

\begin{lemma}
Let $\nu$ be a Borel measure on $\IR^{d + 1}_+$ and suppose that there exist constants $\kappa, \eta > 0$ with the following properties. For every dyadic cube $Q \in \cube$ there exist pairwise disjoint dyadic subcubes $Q_j$ of $Q$ such that the sets $E(Q)$ and $E^*(Q)$ defined in~\eqref{eq:dyadic sawwtooth} satisfy
\begin{enumerate}
    \item $\bigl|E(Q) \bigr| \geq  \eta |Q|$,
    \item $\nu\bigl(E^*(Q)\bigr) \leq \kappa |Q|$.
\end{enumerate}
Then $\nu$ is a Carleson measure with $\|\nu\|_{\cC} \leq \kappa \eta^{-1}$.
\end{lemma}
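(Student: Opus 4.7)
The plan is to prove the bound $\nu(R(Q_0))\leq (\kappa/\eta)|Q_0|$ for every dyadic cube $Q_0\in\cube$ by iterating the selection procedure from the hypothesis and summing a geometric series. The structural input is the disjoint decomposition of the Carleson box: since the $Q_j$ furnished by the hypothesis are pairwise disjoint dyadic subcubes of $Q$, and $R(Q_j)\subseteq R(Q)$, one has
\begin{align*}
R(Q) \;=\; E^*(Q) \;\sqcup\; \bigsqcup_{j} R(Q_j),
\end{align*}
while property (1) gives the packing estimate $\sum_j |Q_j| = |Q|-|E(Q)| \leq (1-\eta)|Q|$. In particular each selected $Q_j$ is a proper subcube ($\ell(Q_j)\leq \ell(Q)/2$), since $E(Q)$ has positive measure.

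Next, I iterate: set $\cF_0 \coloneqq \{Q_0\}$ and, given $\cF_n$, apply the hypothesis to each $Q'\in\cF_n$ to obtain the pairwise disjoint children, which together form $\cF_{n+1}$. Induction yields $\sum_{Q'\in\cF_n} |Q'| \leq (1-\eta)^n |Q_0|$ and $\ell(Q') \leq 2^{-n}\ell(Q_0)$ for every $Q'\in\cF_n$. The cumulative decomposition after $N$ generations reads
\begin{align*}
R(Q_0) \;=\; \bigsqcup_{n=0}^{N-1}\bigsqcup_{Q'\in\cF_n} E^*(Q') \;\sqcup\; \bigsqcup_{Q'\in\cF_N} R(Q'),
\end{align*}
and applying property (2) to each $E^*(Q')$ gives
\begin{align*}
\sum_{n=0}^{N-1}\sum_{Q'\in\cF_n}\nu(E^*(Q')) \;\leq\; \kappa\sum_{n=0}^{N-1}(1-\eta)^n |Q_0| \;\leq\; \frac{\kappa}{\eta}|Q_0|.
\end{align*}

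The main obstacle is to pass to the limit $N\to\infty$, i.e.\@ to control the residual term $\sum_{Q'\in\cF_N}\nu(R(Q'))$, because a priori we do not know that $\nu$ is finite on Carleson boxes. The cleanest way around this is to truncate in height: for $\delta>0$ define $\nu_\delta \coloneqq \nu \llcorner \{(x,t): t>\delta\}$. The hypotheses transfer to $\nu_\delta$ since $\nu_\delta(E^*(Q))\leq \nu(E^*(Q))\leq \kappa|Q|$ and $|E(Q)|\geq \eta|Q|$ are unchanged. Choosing $N$ so large that $2^{-N}\ell(Q_0)\leq \delta$ forces $R(Q')\subseteq \{t\leq \delta\}$ for every $Q'\in\cF_N$, so the residual vanishes for $\nu_\delta$. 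Hence $\nu_\delta(R(Q_0))\leq (\kappa/\eta)|Q_0|$, and monotone convergence as $\delta\to 0^+$ yields $\nu(R(Q_0))\leq (\kappa/\eta)|Q_0|$. Taking the supremum over $Q_0$ gives $\|\nu\|_\cC\leq \kappa\eta^{-1}$, as claimed.
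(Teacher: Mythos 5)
Your proof is correct. The paper itself omits the argument and simply cites \cite[Lem.~14.10]{ISEM}; your iterated stopping-time decomposition with the geometric series $\sum_n(1-\eta)^n$ and the height-truncation $\nu_\delta$ to dispose of the residual term is precisely the standard argument given there (presented in ``unrolled'' form rather than as a self-improvement inequality on $\sup_Q\nu(R(Q))/|Q|$), so there is nothing to add.
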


\section{Holomorphic dependency}
\label{Sec: Holomorphic dependence}

\noindent In this section we present the proof of Theorem~\ref{Thm: Holomorphic dependence} and follow an argument of Auscher and Tchamitchian for elliptic operators in divergence form~\cite[Sec.~0.5]{Auscher_Tchamitchian}. Even though the necessary modifications to the generalized Stokes operator are very small, we give the full argument for the sake of completeness. We start by proving that the set $\cO$ in Theorem~\ref{Thm: Holomorphic dependence} is open.

\begin{lemma}
    The set $\cO \coloneqq \{\mu: \text{$\mu$ satisfies Assumption~\ref{Ass: Coefficients} for some } \mu_{\bullet} , \mu^{\bullet} > 0\}$ is open in the $\L^\infty$-topology.
\end{lemma}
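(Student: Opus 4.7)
The plan is to verify that a small $\L^\infty$-perturbation of $\mu_1\in\cO$ still satisfies both the upper bound and the G\r{a}rding inequality from Assumption~\ref{Ass: Coefficients}, and to quantify how small the perturbation must be in terms of $(\mu_1)_\bullet$.

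Fix $\mu_1 \in \cO$ with constants $(\mu_1)_\bullet,(\mu_1)^\bullet>0$, and let $\mu_2\in\L^\infty(\IR^d;\cL(\IC^{d\times d}))$ with $\delta \coloneqq \|\mu_2-\mu_1\|_{\L^\infty}$ still to be chosen. The upper bound is immediate from the triangle inequality:
\begin{align*}
\esssup_{x\in\IR^d}\|\mu_2(x)\|_{\cL(\IC^{d\times d})} \leq (\mu_1)^\bullet + \delta.
\end{align*}
For the coercivity, I would decompose $\mu_2 = \mu_1 + (\mu_2-\mu_1)$ in the sesquilinear form and use $|(\mu_2-\mu_1)(x)G \cdot \overline{G}| \leq \delta |G|_{\IC^{d\times d}}^2$ pointwise. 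For $u \in \H^1(\IR^d;\IC^d)$ this yields
\begin{align*}
 \Re \sum_{\alpha,\beta,i,j=1}^d \int_{\IR^d}(\mu_2)_{\alpha\beta}^{ij}\partial_\beta u_j \overline{\partial_\alpha u_i}\,\d x \geq (\mu_1)_\bullet \|\nabla u\|_{\L^2}^2 - \delta \|\nabla u\|_{\L^2}^2 = ((\mu_1)_\bullet-\delta)\|\nabla u\|_{\L^2}^2.
\end{align*}

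Hence the choice $\delta \coloneqq (\mu_1)_\bullet/2$ ensures that $\mu_2$ satisfies Assumption~\ref{Ass: Coefficients} with constants $(\mu_2)_\bullet \coloneqq (\mu_1)_\bullet/2>0$ and $(\mu_2)^\bullet \coloneqq (\mu_1)^\bullet + (\mu_1)_\bullet/2$, so the open $\L^\infty$-ball of radius $(\mu_1)_\bullet/2$ around $\mu_1$ is contained in $\cO$. There is no real obstacle here, as both the ellipticity and the boundedness conditions are stable under the $\L^\infty$-topology in an essentially linear fashion; the only point that requires attention is to verify that $\lvert(\mu_2-\mu_1)(x)G \cdot \overline{G}\rvert \leq \|\mu_2-\mu_1\|_{\L^\infty}|G|_{\IC^{d\times d}}^2$ almost everywhere, which is precisely the definition of the $\cL(\IC^{d\times d})$-operator norm as used in Assumption~\ref{Ass: Coefficients}.
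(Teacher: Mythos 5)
Your proof is correct and follows essentially the same route as the paper: the triangle inequality for the upper bound, and the decomposition $\mu_2=\mu_1+(\mu_2-\mu_1)$ together with the pointwise operator-norm bound to absorb the perturbation into the G\r{a}rding constant, with the same radius $(\mu_1)_\bullet/2$. No issues.
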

\begin{proof}
    Let $\mu$ satisfy Assumption~\ref{Ass: Coefficients} with constants $\mu^\bullet,\mu_\bullet>0$ and $M = (M_{\alpha \beta}^{i j})_{\alpha , \beta , i , j = 1}^d$ with $M_{\alpha \beta}^{i j} \in \L^{\infty} (\IR^d ; \IC)$ for all $1 \leq \alpha , \beta , i , j \leq d$ be such that
    \begin{align*}
        \|\mu - M \|_{\L^\infty(\IR^d;\cL(\IC^{d \times d}))} < \frac{\mu_{\bullet}}{2}.
    \end{align*}
    Then $M$ is obviously bounded and fulfills for $u \in \H^1 (\IR^d ; \IC^d)$
    \begin{align*}
        \Re \sum_{\alpha , \beta , i , j = 1}^d \int_{\IR^d} M^{i j}_{\alpha \beta} \partial_{\beta} u_j \overline{\partial_{\alpha} u_i} \, \d x &= \Re \sum_{\alpha , \beta , i , j = 1}^d \int_{\IR^d} \mu^{i j}_{\alpha \beta}\partial_{\beta} u_j \overline{\partial_{\alpha} u_i} \, \d x\\
        &\ + \Re \sum_{\alpha , \beta , i , j = 1}^d  \int_{\IR^d} (M^{i j}_{\alpha \beta}-\mu^{i j}_{\alpha \beta} ) \partial_{\beta} u_j \overline{\partial_{\alpha} u_i} \, \d x \\
        &\geq \mu_{\bullet} \| \nabla u \|_{\L^2}^2- \|M - \mu \|_{\L^\infty(\IR^d;\cL(\IC^{d \times d}))}\| \nabla u \|_{\L^2}^2\\
        &\geq  \frac{\mu_{\bullet}}{2} \| \nabla u \|_{\L^2}^2.
    \end{align*}
    Hence, $M$ satisfies Assumption~\ref{Ass: Coefficients} and the claim follows. 
\end{proof}

As in the introduction, we emphasize the dependency of $A$ on the coefficients $\mu$ by writing $A_{\mu}$. The corresponding weak Stokes operator will be denoted by $\cA_{\mu}$.

\begin{proof}[Proof of Theorem~\ref{Thm: Holomorphic dependence}]
    Let $\mu_0$ satisfy Assumption~\ref{Ass: Coefficients} and let $M = (M_{\alpha \beta}^{i j})_{\alpha , \beta , i , j = 1}^d$ with $M_{\alpha \beta}^{i j} \in \L^{\infty} (\IR^d ; \IC)$ for all $1 \leq \alpha , \beta , i , j \leq d$. By means of the previous lemma, the holomorphic map $\IC \ni z \mapsto \mu_z\coloneqq  \mu_0 + zM$ maps into $\cO$ whenever it is restricted to a small ball $|z|<\frac{\varepsilon}{\|M\|_\infty}$. For $u\in\L^2_\sigma(\IR^d)$ we have
    \begin{align*}
        ((1+t^2A_{\mu_z})^{-1} &- (1+t^2A_{\mu_0})^{-1})u \\
        &=  ((1+t^2\cA_{\mu_z})^{-1} - (1+t^2\cA_{\mu_0})^{-1})u\\
        &=  (1+t^2\cA_{\mu_0})^{-1}\big[(1+t^2\cA_{\mu_0}) -(1+t^2\cA_{\mu_z})\big] (1+t^2\cA_{\mu_z})^{-1}u\\
        &=  t^2(1+t^2\cA_{\mu_0})^{-1}\big[\cP\divergence( (\mu_0+zM) \nabla \bigdot) -\cP\divergence( \mu_0 \nabla \bigdot)\big] (1+t^2\cA_{\mu_z})^{-1}u\\
        &=  t^2\big( (1+t^2\cA_{\mu_0})^{-1}\cP\divergence\big)  zM \nabla(1+t^2A_{\mu_z})^{-1} u.
    \end{align*}
    Iterating this identity yields
    \begin{align*}
        (1+t^2A_{\mu_z})^{-1}u &= \sum_{n=0}^\infty \Big[t^2\big( (1+t^2\cA_{\mu_0})^{-1}\cP\divergence\big)  zM \nabla\Big]^n(1+t^2A_{\mu_0})^{-1} u
    \end{align*}
    which converges in $\L^2_{\sigma} (\IR^d)$ whenever $\eps > 0$ is small enough. Indeed, $(t^2\nabla (1+t^2\cA_{\mu_0})^{-1}\cP\divergence)_{t>0}$ and $(t(1+t^2\cA_{\mu_0})^{-1}\cP\divergence)_{t>0}$ are uniformly bounded in $\L^2$ by Proposition~\ref{Prop: L2 resolvent bounds} such that
    \begin{align}
    \label{eq: bounds for series}
    \begin{split}
        \Big\| \Big[t^2\big( (1+t^2\cA_{\mu_0})^{-1}\cP\divergence\big)  zM \nabla\Big]^n &(1+t^2A_{\mu_0})^{-1} u \Big\|_{\L^2} \\
        &\leq C_1^n|z|^n\|M\|^n_{\L^\infty} \|t\nabla(1+t^2A_{\mu_0})^{-1}u\|_{\L^2}\\
        &\leq  C_1^n|z|^n\|M\|^n_{\L^\infty} C\|u\|_{\L^2} \\
        &\leq 2^{- n} C \| u \|_{\L^2}
    \end{split}
    \end{align}
    for $\varepsilon \leq (2 C_1)^{-1}$ and where $C_1>0$ denotes the boundedness constant from Proposition~\ref{Prop: L2 resolvent bounds}. Now, using the Balakrishnan representation for square roots (see~\cite[Prop.~6.18]{ISEM}) we have for all $u\in \dom(A_{\mu_z}^{1/2}) =\H^1_\sigma(\IR^d)$,
    \begin{align*}
        A_{\mu_z}^\frac{1}{2}u &= \frac{2}{\pi}\int_0^\infty A_{\mu_z}(1+t^2A_{\mu_z})^{-1}u \,\d t \\
        &= \frac{2}{\pi}\int_0^\infty \frac{1}{t^2}[\Id - (1+t^2A_{\mu_z})^{-1}]u \,\d t \\
        &= \frac{2}{\pi}\int_0^\infty \frac{1}{t^2}\Big[\Id - \sum_{n=0}^\infty \Big[t^2\big( (1+t^2\cA_{\mu_0})^{-1}\cP\divergence\big)  zM \nabla\Big]^n(1+t^2A_{\mu_0})^{-1} \Big]u \,\d t \\
        &= \frac{2}{\pi}\int_0^\infty A_{\mu_0}(1+t^2A_{\mu_0})^{-1}u - \frac{1}{t^2}\sum_{n=
        1}^\infty \Big[t^2\big( (1+t^2\cA_{\mu_0})^{-1}\cP\divergence\big)  zM \nabla\Big]^n(1+t^2A_{\mu_0})^{-1} u \,\d t
    \end{align*}
    where the integrals have to be understood as improper integrals in $\L^2_\sigma(\IR^d)$. Focusing on
    \begin{align*}
        &\int_0^\infty\frac{1}{t^2}\sum_{n=
        1}^\infty \Big[t^2\big( (1+t^2\cA_{\mu_0})^{-1}\cP\divergence\big)  zM \nabla\Big]^n(1+t^2A_{\mu_0})^{-1} u \,\d t\\
        &= \lim\limits_{\delta\to 0}\int_\delta^\frac{1}{\delta} \frac{1}{t^2}\sum_{n=
        1}^\infty \Big[t^2\big( (1+t^2\cA_{\mu_0})^{-1}\cP\divergence\big)  zM \nabla\Big]^n(1+t^2A_{\mu_0})^{-1} u \,\d t,
        \intertext{we first interchange integration and summation due to~\eqref{eq: bounds for series} to get}
        &= \lim\limits_{\delta\to 0}\sum_{n=
        1}^\infty \int_\delta^\frac{1}{\delta} \frac{1}{t^2} \Big[t^2\big( (1+t^2\cA_{\mu_0})^{-1}\cP\divergence\big)  zM \nabla\Big]^n(1+t^2A_{\mu_0})^{-1} u \,\d t.
    \end{align*}
    Next, we want to interchange the limit with the series by dominated convergence but not in the space $\L^2_\sigma(\IR^d)$. Instead we will work in $\H^{-1}_\sigma(\IR^d)$ to guarantee existence of a limit in the first place. For this purpose, set for $0 < \delta \leq 1$
    \begin{align*}
        f_\delta:\IN \to \H^{-1}_\sigma(\IR^d),\quad f_\delta(n) = \int_\delta^\frac{1}{\delta} \frac{1}{t^2} \Big[t^2\big( (1+t^2\cA_{\mu_0})^{-1}\cP\divergence\big)  zM \nabla\Big]^n(1+t^2A_{\mu_0})^{-1} u \,\d t,
    \end{align*}
    then one has existence of a pointwise limit by the following argument: For $0 < \delta' < \delta \leq 1$ and $v\in \H^{1}_\sigma(\IR^d)$ one has
    \begin{align*}
        \langle f_\delta(n) - f_{\delta'}(n) , v \rangle_{\H^{-1}_\sigma , \H^{1}_\sigma} &= \int_\frac{1}{\delta}^\frac{1}{\delta'} \frac{1}{t^2} \Big\langle \Big[t^2\big( (1+t^2\cA_{\mu_0})^{-1}\cP\divergence\big)  zM \nabla\Big]^n(1+t^2A_{\mu_0})^{-1} u, v \Big\rangle_{\L^2_\sigma, \L^2_\sigma}\,\d t  \\
        &\ + \int_{\delta'}^{\delta}\frac{1}{t^2}  \Big\langle\Big[t^2\big( (1+t^2\cA_{\mu_0})^{-1}\cP\divergence\big)  zM \nabla\Big]^n(1+t^2A_{\mu_0})^{-1} u , v \Big\rangle_{\L^2_\sigma , \L^2_\sigma} \,\d t.
    \end{align*}
    Recall, that the adjoint of $A_{\mu_0}$ is the generalized Stokes operator with coefficients $\mu_0^*$. Thus, by duality and uniform boundedness of $(t^2\nabla (1+t^2\cA_{\mu_0})^{-1}\cP\divergence)_{t>0}$ and $(t(1+t^2\cA_{\mu_0})^{-1}\cP\divergence)_{t>0}$ in $\L^2$ from Proposition~\ref{Prop: L2 resolvent bounds} we estimate the integrand as follows
    \begin{align*}
        &\frac{1}{t^2} \Big| \Big\langle \Big[t^2\big( (1+t^2\cA_{\mu_0})^{-1}\cP\divergence\big)  zM \nabla\Big]^n(1+t^2A_{\mu_0})^{-1} u, v \Big\rangle_{\L^2_\sigma, \L^2_\sigma}\Big|\\
        &= \frac{1}{t^2} \Big| \Big\langle tzM \nabla\Big[t^2\big( (1+t^2\cA_{\mu_0})^{-1}\cP\divergence\big)  zM \nabla\Big]^{n-1}(1+t^2A_{\mu_0})^{-1} u, t\nabla (1+t^2A_{\mu_0}^*)^{-1}v \Big\rangle_{\L^2_\sigma, \L^2_\sigma}\Big| \\
        &\leq \frac{1}{t^2} \Big\|  tzM \nabla\Big[t^2\big( (1+t^2\cA_{\mu_0})^{-1}\cP\divergence\big)  zM \nabla\Big]^{n-1}(1+t^2A_{\mu_0})^{-1} u \Big\|_{\L^2}\cdot \| t\nabla (1+t^2A_{\mu_0}^*)^{-1}v \|_{\L^2}\\
        &\leq \frac{1}{t^2} C_1^{n-1}|z|^n\|M\|^n_{\L^\infty} \cdot \|  t\nabla(1+t^2A_{\mu_0})^{-1} u \|_{\L^2}\cdot \| t\nabla (1+t^2A_{\mu_0}^*)^{-1}v \|_{\L^2}.
    \end{align*}
    Depending on the size of $t$ we choose two different ways of controlling the gradients of the resolvents. On the one hand, Proposition~\ref{Prop: L2 resolvent bounds} implies for $u , v \in \L^2_{\sigma} (\IR^d)$
    \begin{align*}
        \| t\nabla(1+t^2A_{\mu_0})^{-1} u \|_{\L^2}\cdot \| t\nabla (1+t^2A_{\mu_0}^*)^{-1}v \|_{\L^2} \lesssim  \|u \|_{\L^2}\cdot \|v \|_{\L^2}
    \end{align*}
    and, on the other hand, the square root property yields for $u,v\in \H^1_\sigma(\IR^d)$
    \begin{align*}
        \| t\nabla(1+t^2A_{\mu_0})^{-1} u \|_{\L^2} &\cdot \| t\nabla (1+t^2A_{\mu_0}^*)^{-1}v \|_{\L^2} \\
        &\simeq \| tA_{\mu_0}^\frac{1}{2}(1+t^2A_{\mu_0})^{-1} u \|_{\L^2}\cdot \| t(A_{\mu_0}^*)^\frac{1}{2} (1+t^2A_{\mu_0}^*)^{-1}v \|_{\L^2}\\
        &=\| t(1+t^2A_{\mu_0})^{-1} A_{\mu_0}^\frac{1}{2}u \|_{\L^2}\cdot \| t (1+t^2A_{\mu_0}^*)^{-1}(A_{\mu_0}^*)^\frac{1}{2}v \|_{\L^2}\\
        &\lesssim t^2\|A_{\mu_0}^\frac{1}{2}u \|_{\L^2}\cdot \| (A_{\mu_0}^*)^\frac{1}{2}v \|_{\L^2}\\
        &\simeq t^2\|\nabla u \|_{\L^2}\cdot \| \nabla v \|_{\L^2}.
    \end{align*}
    Hence, the integrand can be controlled by $C_1^{n - 1} \lvert z \rvert^n \| M \|_{\L^{\infty}}^n \min\{1,t^{-2}\}\cdot \|u \|_{\H^1}\|v \|_{\H^1}$ which yields
    \begin{align*}
        |\langle f_\delta(n) - f_{\delta'}(n) , v \rangle_{\H^{-1}_\sigma , \H^{1}_\sigma}|\lesssim C_1^{n-1}|z|^n\|M\|^n_{\L^\infty} \|u \|_{\H^1}\|v \|_{\H^1} (\delta - \delta').
    \end{align*}
    This shows pointwise convergence of $f_\delta$ as $\delta \to 0$ and simultaneously gives a summable majorant for the sequence for $\delta\in (0,1]$, namely
    \begin{align*}
        \|f_\delta(n)\|_{\H^{-1}_\sigma(\IR^d)}\lesssim C_1^{n-1}|z|^n\|M\|^n_{\L^\infty}\|u \|_{\H^1} \leq \frac{2^{- n}}{C_1} \| u \|_{\H^1} \eqqcolon g(n).
    \end{align*}
    Thus, by dominated convergence it follows
    \begin{align*}
        &\lim\limits_{\delta\to 0}\sum_{n=
        1}^\infty \int_\delta^\frac{1}{\delta} \frac{1}{t^2} \Big[t^2\big( (1+t^2\cA_{\mu_0})^{-1}\cP\divergence\big)  zM \nabla\Big]^n(1+t^2A_{\mu_0})^{-1} u \,\d t\\
        &= \sum_{n=
        1}^\infty \lim\limits_{\delta\to 0}\int_\delta^\frac{1}{\delta} \frac{1}{t^2} \Big[t^2\big( (1+t^2\cA_{\mu_0})^{-1}\cP\divergence\big)  zM \nabla\Big]^n(1+t^2A_{\mu_0})^{-1} u \,\d t.
    \end{align*}
    Again due to Balakrishnan representation for square roots and the above identity we conclude that
    \begin{align}
    \label{eq: series for square root}
        A_{\mu_z}^\frac{1}{2}u = A_{\mu_0}^\frac{1}{2}u + \sum_{n=
        1}^\infty z^n T_n u \quad \text{in $\H^{-1}_\sigma(\IR^d)$}
    \end{align}
    holds for all $u\in \H^{1}_\sigma(\IR^d)$ and appropriate operators $(T_n)_{n\in\IN} \subseteq \cL(\H^1_{\sigma} (\IR^d) , \H^{-1}_{\sigma} (\IR^d))$. This readily proves holomorphy of the map $z \mapsto A_{\mu_z}^{1/2} u$ in the $\H^{-1}_{\sigma}$-topology. To show convergence in $\L^2_\sigma(\IR^d)$ we use Taylor's theorem (see, e.g.,~\cite[Prop.~A.1]{Vector-valued-Laplace-Trafo}) in $\H^{-1}_\sigma(\IR^d)$ to represent the coefficients as
    \begin{align*}
        T_nu = \frac{1}{2\pi \ii} \int\limits_{|w|=r} A_{\mu_w}^\frac{1}{2}u\,\frac{\d w}{w^{n+1}}
    \end{align*}
    for $r>0$ small enough. Now, due to the Kato property it is
    \begin{align*}
        \|T_n u \|_{\L^2} 
        \leq \frac{1}{2\pi}\int\limits_{|w|=r} \|A_{\mu_w}^\frac{1}{2}u\|_{\L^2}\,\frac{|\d w|}{|w|^{n+1}}
        \lesssim  \frac{1}{2\pi}\int\limits_{|w|=r} \|\nabla u\|_{\L^2}\,\frac{|\d w|}{|w|^{n+1}}
        = r^{-n}\|\nabla u\|_{\L^2}.
    \end{align*}
    Thus, the right-hand side of~\eqref{eq: series for square root} actually converges absolutely in $\L^2_\sigma(\IR^d)$ if $\lvert z \rvert < r$ and is equal to $A^{1/2}_{\mu_z} u$. Since each of the operators $A_{\mu_z}^{1/2}$ lies in $\cL(\H^1_{\sigma} (\IR^d) , \L^2_{\sigma} (\IR^d))$ the property~\cite[Prop.~A.3]{Vector-valued-Laplace-Trafo} allows to conclude holomorphy with respect to the operator norm by strong holomorphy. This proves the claim.
\end{proof}

As mentioned in the introduction, that holomorphy implies a local Lipschitz property, is well known and treated, e.\@g.\@, in~\cite[Thm.~3.24]{Morris_Turner} (see also~\cite[Thm.~2.3]{Auscher_Axelsson_McIntosh}) in the context of holomorphic functional calculus for perturbed Dirac operators. Although, as above, the changes to the square root of the generalized Stokes operator are very small, we present the full argument for the sake of completeness.    

\begin{proof}[Proof of Corollary~\ref{cor: Lipschitz estimate}]
    Let $\mu_1 \in \cO$ and $0<\delta <(\mu_1)_\bullet$. Then for all $\mu_2\in\cO$ satisfying $0 < \|\mu_2 - \mu_1 \|_{\L^\infty(\IR^d;\cL(\IC^{d \times d}))} < \delta$ we have that
    \begin{align*}
        \mu_z \coloneqq \mu_1 + z\delta \cdot \frac{\mu_2-\mu_1}{ \|\mu_1 - \mu_2 \|_{\L^\infty(\IR^d;\cL(\IC^{d \times d}))}} 
    \end{align*}
    belongs to $\cO$ if $z\in \mathbb{D} \coloneqq \{w\in\IC : |w|<1 \}$. In this case, Theorem~\ref{Thm: Kato} and Theorem~\ref{Thm: Holomorphic dependence} imply that
    \begin{align*}
        C_\delta^{-1}\|\nabla u\|_{\L^2} \leq \|A^{1/2}_{\mu_z}u \|_{\L^2} \leq C_\delta \|\nabla u\|_{\L^2} \qquad (u\in \H^1_\sigma(\IR^d))
    \end{align*}
    for some constant $C_\delta>0$ depending only on $d,(\mu_1)_\bullet, (\mu_1)^{\bullet}$ and $\delta$ and that the map
    \begin{align*}
        \mathbb{D} \ni z \mapsto A^{1/2}_{\mu_z} \in \mathcal{L}(\H^1_\sigma(\IR^d),\L^2_\sigma(\IR^d))
    \end{align*}
    is holomorphic. Next, define the function
    \begin{align*}
        G:\mathbb{D} \to \L^2_\sigma(\IR^d),\quad  G(z) \coloneqq \frac{A^{1/2}_{\mu_z}u - A^{1/2}_{\mu_1}u}{C'\|\nabla u \|_{\L^2}}
    \end{align*}
    for $u\in\H^1_\sigma(\IR^d)$ and some constant $C' >0$ to be chosen such that
    \begin{align*}
        \|G(z)\|_{\L^2} \leq \frac{\|A^{1/2}_{\mu_z}u\|_{\L^2} + \|A^{1/2}_{\mu_1}u\|_{\L^2}}{C'\|\nabla u \|_{\L^2}} \leq \frac{C_\delta + C_0}{C'}<1.
    \end{align*}
    Then for $v\in \L^2_\sigma(\IR^d)$ with $\|v\|_{\L^2} = 1$ the scalar-valued function
    \begin{align*}
        \mathbb{D}\ni z\mapsto \langle G(z) , v \rangle_{\L^2_\sigma, \L^2_\sigma}
    \end{align*}
    is holomorphic, maps onto the unit disk $\mathbb{D}$ and is equal to $0$ if $z=0$. This allows us to apply Schwarz lemma from complex analysis to get the estimate
    \begin{align*}
        \|G(z)\|_{\L^2} =\sup\limits_{\substack{v\in \L^2_\sigma(\IR^d) \\ \|v\|_{\L^2} = 1}} |\langle G(z) , v \rangle_{\L^2_\sigma , \L^2_\sigma} |\leq |z|
    \end{align*}
    for all $z\in \mathbb{D}$. Using the definition of $G(z)$ and choosing $z= \frac{\|\mu_2 - \mu_1 \|_{\L^\infty(\IR^d;\cL(\IC^{d \times d}))}}{\delta} \in \mathbb{D}$ we get
    \begin{align*}
        \|A^{1/2}_{\mu_2}u - A^{1/2}_{\mu_1}u\|_{\L^2} \leq \frac{C'}{\delta}\|\nabla u \|_{\L^2}\cdot\|\mu_2 - \mu_1 \|_{\L^\infty(\IR^d;\cL(\IC^{d \times d}))}
    \end{align*}
    for all $u\in \H^1_\sigma(\IR^d)$ which proves the claim.
\end{proof}

\begin{bibdiv}
\begin{biblist}

\bibitem{AAAHK}
M.\@ Alfonseca, P.\@ Auscher, A.\@ Axelsson, S.\@ Hofmann, and S.\@ Kim.
\newblock Analyticity of layer potentials and $\L^2$ solvability of boundary value problems for divergence form elliptic equations with complex $\L^\infty$ coefficients.
\newblock {\em Adv.\@ Math.}~\textbf{226} (2011), no.~5, 4533--4606.

\bibitem{Vector-valued-Laplace-Trafo}
W.\@ Arendt, C.J.K. Batty, M.\@ Hieber, and F.\@ Neubrander.
\newblock \emph{Vector-valued Laplace Transforms and Cauchy Problems}.
\newblock Birkhäuser, Basel, 2001.

\bibitem{AA}
A.\@ Amenta and P.\@ Auscher.
\newblock {\em Elliptic boundary value problems with fractional regularity
	data}, vol.~37 of {\em CRM Monograph Series}.
\newblock American Mathematical Society, Providence, RI, 2018.

\bibitem{Auscher}
P.\@ Auscher.
\newblock On necessary and sufficient conditions for $\L^p$-estimates of Riesz transforms associated to elliptic operators on Rn and related estimates.
\newblock {\em Mem.\@ Amer.\@ Math.\@ Soc.}~\textbf{186} (2007), no.~871.

\bibitem{AA1}
P.\@ Auscher and A.\@ Axelsson.
\newblock Weighted maximal regularity estimates and solvability of non-smooth
elliptic systems {I}.
\newblock {\em Invent.\@ Math.}~\textbf{184} (2011), no.~1, 47--115.

\bibitem{Auscher_Axelsson_McIntosh}
P.\@ Auscher, A.\@ Axelsson, and A.\@ McIntosh.
\newblock Solvability of elliptic systems with square integrable boundary data.
\newblock {\em Ark.\@ Mat.}~\textbf{48} (2010), no.~2, 253--287.

\bibitem{ABHR}
P.\@ Auscher, N.\@ Badr, R.\@ Haller-Dintelmann, and J.\@ Rehberg.
\newblock The square root problem for second order divergence form
operators with mixed boundary conditions on $\L^p$.
\newblock {\em J.\@ Evol.\@ Equ.}~\textbf{15} (2015), no.~1, 165--208.

\bibitem{Auscher_Egert}
P.\@ Auscher and M.\@ Egert.
\newblock {\em Boundary value problems and Hardy spaces for elliptic systems with block structure.}
\newblock Progr.\@ Math.\@, vol.~346, Birkh\"auser/Springer, Cham, 2023.

\bibitem{Auscher_Egert_Nystroem}
P.\@ Auscher, M.\@ Egert, and K.\@ Nystr\"om.
\newblock $\L^2$ well-posedness of boundary value problems for parabolic systems with measurable coefficients.
\newblock {\em J.\@ Eur.\@ Math.\@ Soc.}~\textbf{22} (2020), no.~9, 2943--3058.

\bibitem{AHLMcT}
P.\@ Auscher, S.\@ Hofmann, M.\@ Lacey, A.\@ McIntosh, P.\@ Tchamitchian.
\newblock The solution of the Kato square root problem for second order elliptic operators on $\IR^n$.
\newblock {\em Ann.\@ of Math.}~(2) \textbf{156} (2002), no.~2, 633--654.

\bibitem{AM}
P.\@ Auscher and M.\@ Mourgoglou.
\newblock Representation and uniqueness for boundary value elliptic problems
via first order systems.
\newblock {\em Rev.\@ Mat.\@ Iberoam.}~\textbf{35} (2019), no.~1,241--315.

\bibitem{AusSta}
P.\@ Auscher and S.\@ Stahlhut.
\newblock Functional calculus for first order systems of Dirac type and
boundary value problems.
\newblock {\em M\'{e}m.\@ Soc.\@ Math.\@ Fr.\@ (N.S.)}, 2016.

\bibitem{Auscher_Tchamitchian}
P.\@ Auscher and P.\@ Tchamitchian.
\newblock Square root problem for divergence operators and related topics.
\newblock {\em Ast\'erisque} (1998), no.~249.

\bibitem{AKM}
A.\@ Axelsson, S.\@ Keith, and A.\@ McIntosh.
\newblock Quadratic estimates and functional calculi of perturbed Dirac operators.
\newblock {\em Invent.\@ Math.}~\textbf{163} (2006), no.~3, 455--497.

\bibitem{AKM_mixed}
A.\@ Axelsson, S.\@ Keith, and A.\@ McIntosh.
\newblock The Kato square root problem for mixed boundary value problems.
\newblock {\em J.\@ London Math.\@ Soc.}~(2) \textbf{74} (2006), no.~1, 113--130.

\bibitem{Bailey}
J.\@ Bailey.
\newblock The Kato square root problem for divergence form operators with potential.
\newblock {\em J.\@ Fourier Anal.\@ Appl.}~\textbf{26} (2020), no.~3, Paper No.~46, 58 pp.

\bibitem{Lashi}
L.\@ Bandara, A.\@ McIntosh, and A.\@ Ros\'en.
\newblock Riesz continuity of the Atiyah-Singer Dirac operator under perturbations of the metric.
\newblock {\em Math.\@ Ann.}~\textbf{370} (2018), no.~1-2, 863--915. 

\bibitem{Bechtel}
S.\@ Bechtel.
\newblock $\L^p$-estimates for the square root of elliptic systems with mixed boundary conditions II.
\newblock {\em J.\@ Differential Equations}~\textbf{379} (2024), 104--124.

\bibitem{Bechtel_Egert_Haller}
S.\@ Bechtel, M. \@Egert, and R.\@ Haller-Dintelmann.
\newblock The Kato square root problem on locally uniform domains.
\newblock {\em Adv.\@ Math.}~\textbf{375} (2020), 107410, 37 pp.

\bibitem{C-UR1}
D.\@ Cruz-Uribe and C.\@ Rios.
\newblock The solution of the Kato problem for degenerate elliptic operators with Gaussian bounds.
\newblock {\em Trans.\@ Amer.\@ Math.\@ Soc.}~\textbf{364} (2012), no.~7, 3449--3478.

\bibitem{C-UR2}
D.\@ Cruz-Uribe and C.\@ Rios.
\newblock The Kato problem for operators with weighted ellipticity.
\newblock {\em Trans.\@ Amer.\@ Math.\@ Soc.}~\textbf{367} (2015), no.~7, 4727--4756.

\bibitem{C-UMR}
D.\@ Cruz-Uribe, J.M.\@ Martell, and C.\@ Rios.
\newblock On the Kato problem and extensions for degenerate elliptic operators.
\newblock {\em Anal.\@ PDE}~\textbf{11} (2018), no.~3, 609--660.

\bibitem{Egert}
M.\@ Egert.
\newblock $\L^p$-estimates for the square root of elliptic systems with mixed boundary conditions.
\newblock {\em J.\@ Differential Equations}~\textbf{265} (2018), no.~4, 1279--1323.

\bibitem{Egert_Haller_Tolksdorf}
M.\@ Egert, R.\@ Haller-Dintelmann, and P.\@ Tolksdorf.
\newblock The Kato square root problem for mixed boundary conditions.
\newblock {\em J.\@ Funct.\@ Anal.}~\textbf{267} (2014), no.~5, 1419--1461.

\bibitem{Haase}
\textsc{M.~Haase}.
\newblock {\em The Functional Calculus for Sectorial Operators.} Operator Theory: Advances and Applications, vol.~169,
\newblock Birkh{\"a}user, Basel, 2006.

\bibitem{Hitchhiker}
E.\@ Di Nezza, G.\@ Palatucci, and E.\@ Valdinoci.
\newblock Hitchhiker's guide to the fractional Sobolev spaces.
\newblock {\em Bull.\@ Sci.\@ Math.}~\textbf{136} (2012), no.~5, 521--573.

\bibitem{ISEM}
M.\@ Egert, R.\@ Haller, S.\@ Monniaux, and P.\@ Tolksdorf.
\newblock {\em Harmonic Analysis Techniques for Elliptic Operators}.
\newblock Lecture notes of the 27th Internet Seminar on Evol.\@ Equ., \url{https://www.mathematik.tu-darmstadt.de/media/analysis/lehrmaterial_anapde/ISem_complete_lecture_notes.pdf}.

\bibitem{Galdi}
G.~P.~Galdi.
\newblock {\em An introduction to the mathematical theory of the Navier-Stokes equations. Steady-state problems.} Springer Monographs in Mathematics.
\newblock Springer, New York, 2011.

\bibitem{HPR}
A.\@ Hassell, P.\@ Portal, and J.\@ Rozendaal.
\newblock Off-singularity bounds and Hardy spaces for Fourier integral operators.
\newblock {\em Trans.\@ Amer.\@ Math.\@ Soc.}~\textbf{373} (2020), no.~8, 5773--5832.

\bibitem{HKMP}
S.\@ Hofmann, C.\@ Kenig, S.\@ Mayboroda, and J.\@ Pipher.
\newblock Square function/non-tangential maximal function estimates and the Dirichlet problem for non-symmetric elliptic operators.
\newblock {\em J.\@ Amer.\@ Math.\@ Soc.}~\textbf{28} (2015), no.~2, 483--529.

\bibitem{Hofmann_Mayboroda_McIntosh}
S.\@ Hofmann, S.\@ Mayboroda, and A.\@ McIntosh.
\newblock Second order elliptic operators with complex bounded measurable coefficients in $\L^p$, Sobolev and Hardy spaces.
\newblock {Ann.\@ Sci.\@ \'Ec.\@ Norm.\@ Sup\'er.}~(4) \textbf{44} (2011), no.~5, 723--800.

\bibitem{HMiMo}
S.\@ Hofmann, M.\@ Mitrea, and A.\@ Morris.
\newblock The method of layer potentials in $\L^p$ and endpoint spaces for
elliptic operators with $\L^\infty$ coefficients.
\newblock {\em Proc.\@ Lond.\@ Math.\@ Soc.\@ (3)}~\textbf{111} (2015), no.~3, 681--716.

\bibitem{Fractional_poincare}
R.\@ Hurri-Syrj\"anen, A.\@V.\@ V\"ah\"akangas.
\newblock On fractional Poincar\'e inequalities.
\newblock {\em J.\@ Anal.\@ Math.}~\textbf{120} (2013), 85--104.

\bibitem{Kato_perturbation}
T.\@ Kato.
\newblock {\em Perturbation Theory for Linear Operators}. Classics Math.
\newblock Springer-Verlag, Berlin, 1995.

\bibitem{Lions}
J.-L.\@ Lions. 
\newblock Espaces d'interpolation et domaines de puissances fractionnaires
d'op\'erateurs.
\newblock {\em J.\@ Math.\@ Soc.\@ Japan}~\textbf{14} (1962), 233--241.

\bibitem{McIntosh}
A.\@ McIntosh.
\newblock On the comparability of $A^{1/2}$ and $A^{*1/2}$.
\newblock {\em Proc.\@ Amer.\@ Math.\@ Soc.}~\textbf{32} (1972), 430--434.

\bibitem{Mitrea_Monniaux}
M.~Mitrea and S.~Monniaux.
\newblock The regularity of the Stokes operator and the Fujita--Kato approach to the Navier--Stokes initial value problem in Lipschitz domains.
\newblock {\em J.\@ Funct.\@ Anal.}~\textbf{254} (2008), no.~6, 1522--1574.

\bibitem{Morris}
A.\@ Morris.
\newblock The Kato square root problem on submanifolds.
\newblock {\em J.\@ Lond.\@ Math.\@ Soc.}~(2) \textbf{86} (2012), no.~3, 879--910.

\bibitem{Morris_Turner}
A.\@ Morris and A.\@ Turner.
\newblock Solvability for non-smooth Schrödinger equations with singular potentials and square integrable data.
\newblock {\em J.\@ Funct.\@ Anal.}~\textbf{288} (2025), no.~1, 110680.

\bibitem{Nystroem}
K.\@ Nystr\"om.
\newblock Square function estimates and the Kato problem for second order parabolic operators in $\IR^{n + 1}$.
\newblock {\em Adv.\@ Math.}~\textbf{293} (2016), 1--36.

\bibitem{Pruss_Simonett}
J.~Pr\"uss and G.~Simonett.
\newblock {\em Moving interfaces and quasilinear parabolic evolution equations.} Monogr.\@ Math., vol.~105.
\newblock Birkh\"auser/Springer, Cham, 2016.

\bibitem{Sohr}
H.\@ Sohr.
\newblock {\em The Navier-Stokes equations. An elementary functional analytic approach.} Birkh\"auser Advanced Texts: Basler Lehrb\"ucher.
\newblock Birkh\"auser Verlag, Basel, 2001.

\bibitem{Tolksdorf_off-diagonal}
P.\@ Tolksdorf.
\newblock On off-diagonal decay properties of the generalized Stokes semigroup with bounded measurable coefficients.
\newblock {\em J.\@ Elliptic Parabol.\@ Equ.}~\textbf{7} (2021), no.~2, 323--340.

\bibitem{caccioppoli}
P.\@ Tolksdorf.
\newblock A non-local approach to the generalized Stokes operator with bounded measurable coefficients.
\newblock Available at \url{https://arxiv.org/pdf/2011.13771}.

\end{biblist}
\end{bibdiv}

\end{document}